
\documentclass[onecolumn]{IEEEtran}
\usepackage{amsmath,amsfonts,amssymb,euscript, graphicx, 
epsfig,
enumerate,float,afterpage, subfig, ifthen}%

\usepackage{url}
\usepackage{hyperref}
\usepackage{algpseudocode}

\newtheorem{thm}{Theorem}

\newtheorem{lem}{Lemma}

\newtheorem{assumption}{Assumption}

\newcommand{\expect}[1]{\mathbb{E}\left[#1\right]}

\newcommand{\norm}[1]{||{#1}||}

\newcommand{\script}[1]{{{\cal{#1} }}}

\overrideIEEEmargins

\begin{document}

\title{Opportunistic Learning for Markov Decision Systems with Application to Smart Robots}

\author{Michael J. Neely \\ University of Southern California\\ \url{https://viterbi-web.usc.edu/~mjneely/}
}


\maketitle


\begin{abstract} 
This paper presents an online method that learns optimal decisions 
for a discrete time Markov decision problem with an opportunistic 
structure. The state at time $t$ is a pair $(S(t),W(t))$ where $S(t)$
takes values in a finite set $\script{S}$ of \emph{basic states}, and 
$\{W(t)\}_{t=0}^{\infty}$ is an i.i.d. sequence of random vectors that affect
the system and that have an unknown distribution. Every slot $t$ the controller observes $(S(t),W(t))$
and chooses a control action $A(t)$. The triplet $(S(t),W(t),A(t))$ determines 
a vector of costs and  the transition probabilities for the next
state $S(t+1)$.   The goal is to 
minimize the time average of an objective function subject to additional time
average cost constraints. We develop an algorithm that acts on a corresponding 
virtual system where $S(t)$ is replaced by a decision variable.  An equivalence between virtual and actual systems is established by enforcing a collection of time averaged global balance equations. For any desired $\epsilon>0$, we prove the algorithm achieves 
an $\epsilon$-optimal solution on the virtual
system with a convergence time of $O(1/\epsilon^2)$.  The actual system runs at the same time, its actions are informed by the virtual system,  and its conditional transition probabilities and costs are proven to be the same as the virtual system at every instant of time. Also, its unconditional probabilities and costs are shown in simulation to closely match the virtual system. Our simulations consider online control of a robot that explores a region of interest. Objects with varying rewards appear and disappear and the robot learns what areas to explore and what objects to collect and deliver to a home base.  
\end{abstract} 

\section{Introduction} 

This paper considers a Markov decision system that 
operates in slotted time $t \in \{0, 1, 2, \ldots\}$. The state of the system is given by a pair $(S(t), W(t))$, where $S(t)$ takes values in a finite set $\script{S}=\{1, \ldots, n\}$ of \emph{basic states} (where $n$ is a positive integer), 
and $\{W(t)\}_{t=0}^{\infty}$ is a sequence of independent and identically distributed (i.i.d.) random vectors of arbitrarily large dimension that take values in a (possibly infinite)  set $\script{W}$.  The value $W(t)$ can represent a random fluctuation or augmentation of the state of the system, such as a random vector of rewards,  costs, or   side information.   The distribution of $W(t)$ is unknown to the system controller. This is an \emph{opportunistic Markov decision problem} because the controller can observe the value of $W(t)$ at the start of slot $t$ and can use this knowledge to inform its action.  Specifically, every slot $t$ the controller observes $(S(t),W(t))$ and chooses an action $A(t)$ from an action set $\script{A}$.   The triplet $(S(t),W(t),A(t))$ determines 
a vector of costs incurred on slot $t$ and also the transition probability associated with the next basic state $S(t+1)$.   

It shall be useful to assume the action has the form $A(t) = (A_1(t), \ldots, A_n(t))$, where 
$A_i(t)$ is a \emph{contingency action} given that $S(t)=i$. Assume $\script{A}=\script{A}_1\times\cdots\times \script{A}_n$, where $\script{A}_i$ is the action set when $S(t)=i$.  
For each pair of basic states $i,j\in \script{S}$ define a \emph{transition probability function}  $p_{i,j}(w,a_i)$ so that 
\begin{equation} \label{eq:pij}
P[S(t+1)=j|S(t)=i, W(t),A(t),H(t)]=p_{i,j}(W(t),A_i(t))
\end{equation} 
where $H(t)$ is the system history before slot $t$. The system has the Markov property because the value $S(t+1)$ is conditionally independent of history $H(t)$ given the current $(S(t),W(t),A(t))$.   

Fix $k$ as a nonnegative integer. For $i \in \script{S}$ and $l \in \{0, 1, \ldots, k\}$ define \emph{cost functions} $c_{i,l}(w,a_i)$. Define the cost vector for slot $t$ by $C(t)=(C_0(t), C_1(t), \ldots, C_k(t))$ where 
\begin{equation} \label{eq:cost-function}
C_l(t) = \sum_{i\in \script{S}} 1_{\{S(t)=i\}}c_{i,l}(W(t),A_i(t))
\end{equation} 
for $l \in \{0, \ldots, k\}$, 
where $1_X$ is an indicator function that is $1$ when event $X$ is true and $0$ else. The goal is 
to make decisions over time to produce random processes $\{S(t)\}_{t=0}^{\infty}$ and $\{C(t)\}_{t=0}^{\infty}$ that solve the following time average optimization problem: 
\begin{align}
\mbox{Minimize:} & \quad \overline{C}_0 \label{eq:p1}\\
\mbox{Subject to:} & \quad \overline{C}_l \leq 0 \quad \forall l \in \{1, \ldots, k\} \label{eq:p2} \\
&\quad A(t) \in \script{A}   \quad \forall t \in \{0, 1, 2, \ldots\} \label{eq:p3}
\end{align}
where $\overline{C}_l$ denotes the limiting time average 
\begin{align*}
\overline{C}_l=\limsup_{T\rightarrow\infty} \frac{1}{T}\sum_{t=0}^{T-1}C_l(t)\quad \forall l \in \{0, 1, \ldots, k\} 
\end{align*}
The problem is assumed to be \emph{feasible}, meaning there is a sequence of  actions $A(t)\in \script{A}$ for $t \in \{0, 1, 2, \ldots\}$ that satisfy a \emph{causal and measurable} property (specified in Section \ref{section:control}) and such that constraint \eqref{eq:p2} holds in an almost sure sense.  

There are two main challenges: First, the dimension of $W(t)$ can be large and its corresponding set $\script{W}$ can be infinite, so the full state space $\script{S} \times \script{W}$ is overwhelming. 
Second, the distribution of the i.i.d. random vectors $\{W(t)\}_{t=0}^{\infty}$ is unknown. 
It  is not always possible 
to estimate the distribution in a timely manner. This paper develops a low complexity algorithm that learns to make efficient decisions that drive the system close to optimality. Our algorithm depends  on $n$, the number of basic states, and its implementation and convergence time is independent of the dimension of $W(t)$ and the size of $\script{W}$. The idea is that, rather than learn the full distribution, it is sufficient to learn
certain  \emph{max weight functionals}.  The algorithm can be viewed as a Markov-chain based generalization of the drift-plus-penalty algorithm in \cite{sno-text} for opportunistic network scheduling.

This paper focuses simulations on a toy example of a roving robot, described in the next subsection. Other applications that have this opportunistic Markov decision structure include: 

\begin{itemize} 
\item Wireless scheduling: Consider a mobile wireless network where channel qualities over multiple antennas can be measured before each use.  Then $W(t)$ is a vector of measured attenuations or fading states on each channel at time $t$. Knowledge of $W(t)$
informs which channels should be used. The basic state $S(t)$ can represent location or activity states that change according to a Markov decision structure. 
\item Transportation scheduling: Consider a driver who repeatedly chooses one of multiple customers to transport about a city. Then $S(t)$ is the current location of the driver, while $W(t)$ is a vector that contains the destination, duration, and cost associated with the current customer options. 
\item Computational processing: Consider a computer that repeatedly processes tasks using one of multiple processing modes. Then $W(t)$ is a vector of parameters specific to the task at time $t$, such as time, energy, and cost information that can be observed and that informs the choice of processing mode. 
\end{itemize}

\subsection{Robot example}  \label{section:robot-example} 

\begin{figure}[htbp]
   \centering
   \includegraphics[width=6.8in]{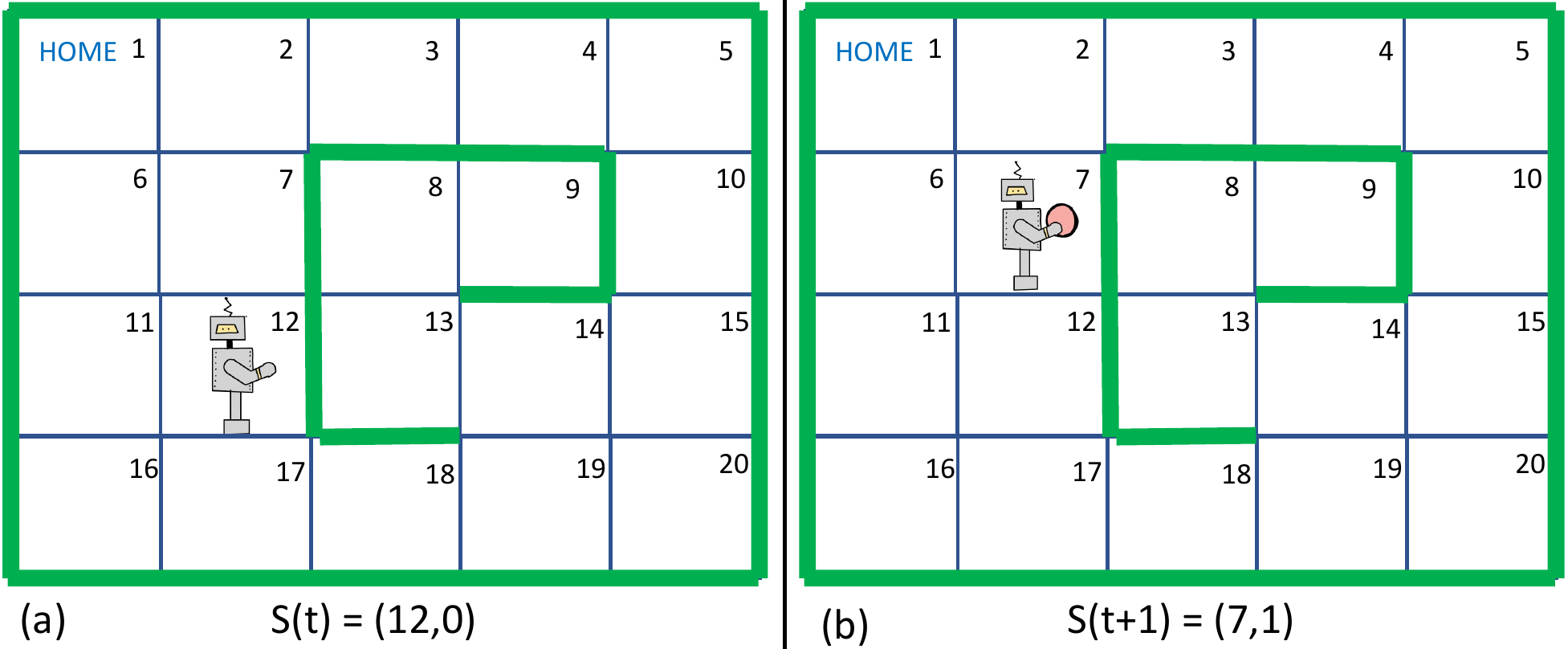} 
   \caption{(a) The robot has current state $S(t)=(12,0)$ because it is in location 12 and is not 
   holding an object. Suppose there is an object at location 12, and the robot decides to collect this object (earning reward $W_{12}(t)$) and then move to location 7; (b) The robot transitions to state $S(t+1)=(7,1)$ because it is now in location 7 and is holding an object. It will continue to hold the object (and hence is blocked from accumulating more rewards by collecting more objects) until it returns to the home location 1 (where it ``deposits'' the object at home and transitions to state $(1,0)$).} 
   \label{fig:robot-fig1}
\end{figure}

This paper focuses on a toy example of a robot that seeks out valuable objects over a region and delivers these  to a home base (see Fig. \ref{fig:robot-fig1}).  The basic state has the structure $S(t) = (Location(t), Hold(t))$ where $Location(t)$ is one of the 20 cells shown in the region, and $Hold(t) \in \{0,1\}$  indicates whether or not the robot is currently holding an object. Thus, there are $20\times 2=40$ basic states ($n=40$). Given $S(t)=(a,h)$ for $a \in \{1, \ldots,20\}$ and $h\in\{0,1\}$, the action $A_{a,h}(t)$ specifies if the robot collects an object and also if it stays in its same cell or moves to an adjacent cell to the North, South, East, or West.   So $A_{a,h}(t)$ is always one of the following 10 actions 
$$\{(Collect, Stay), (Collect, N), (Collect, S), (Collect, W), (Collect, E)\}$$
$$\{(NoCollect,Stay), (NoCollect,N), (NoCollect,S),(NoCollect,W), (NoCollect,E)\}$$ 
Actions within this set of 10 are removed from consideration if they are impossible in state $(a,h)$, such as moving in a direction blocked by a wall, or collecting a new object when the robot is already holding one  (that is, when $h=1$). 

Objects randomly appear and disappear in each location on every slot, and each object has a different value. Define $W(t) = (W_1(t), \ldots, W_{20}(t))$, where $W_a(t)$ is the value of the object in location $a$ at time $t$ (if there is currently no object in location $a$ then $W_a(t)=0$). Assume $\{W(t)\}_{t=0}^{\infty}$ are i.i.d. random vectors with a joint distribution that is unknown to the robot.  On each slot $t$, the robot can view the full reward vector $W(t)$. Then:
\begin{itemize} 
\item If the robot is in location $a \in \{1, \ldots, 20\}$ and is \emph{not holding} an object (so $S(t)=(a,0)$), it chooses whether or not to collect the object (if there is one) in its location $a$. Collecting the object earns reward $W_a(t)$.  The robot also chooses which location to move to next. For example, in Fig. \ref{fig:robot-fig1}(a) the state is $S(t)=(12,0)$ and the robot can choose the next 
location $b\in \{12, 7, 11, 17\}$ (it cannot choose $b=13$ because of the wall).
\item If the robot is in location $a\in\{1, \ldots, 20\}$ and is \emph{holding} an object (so $S(t)=(a,1)$), then it is not allowed to collect another object. Its only choice is which location to visit next. The robot can only change $Hold(t)=1$ to $Hold(t)=0$ by visiting the home base and depositing its object there, where its state changes to $(1,0)$. 
\end{itemize} 

Since every object must be brought home before new ones are collected, the robot must be careful not to waste  time carrying low-valued objects. The goal is to maximize time average reward, equivalent to minimizing $\overline{C}_0$ where: $C_0(t)=-W_{Location(t)}(t)1_{collect}(t)$; $1_{collect}(t)$ is a binary variable that is 1 if and only if the robot collects an object at time $t$; $W_{Location(t)}$ is the reward of that object.  For this robot example, the transition probability functions $p_{(a,h), (b,h')}(\cdot)$  are binary valued and depend only on  $A_{a,h}(t)$, so the next state $(b,h')\in\script{S}$ is  determined by the current state and action. This example has $k=0$, so there are no additional cost constraints $\overline{C}_l\leq 0$.  Section \ref{section:simulation-power} imposes an additional time average power constraint for an extended problem  where different actions of the robot can expend different amounts of power.

Let $r^*$ denote the optimal time average reward that can be achieved, considering all decision strategies. The value $r^*$ (and the resulting optimal strategy) depends on the distribution of the vector $W(t)$ rather than just its mean value $\expect{W(t)} = (\expect{W_1(0)}, \ldots, \expect{W_{20}(0)})$. It is not possible to approximately learn the full distribution in a timely manner. Something more efficient must be done. Further, it is not obvious how to achieve $r^*$ even if the distribution were fully known.

\subsection{Comparing against distribution-aware strategies} \label{section:robot-example-dist}

To illuminate the structure of the robot problem, consider the following distribution on reward vectors $W(t)$: Entries of $W(t)$ are mutually independent; $W_1(t)=0$ surely (no object appears at the home location); for $a \in \{2, \ldots, 20\}$, $W_a(t)=B_a(t)R_a(t)$ with  $B_a(t) \sim Bern(1/2)$ being 1 if and only if an object appears in cell $a$ on slot $t$, $R_a(t)$ is the reward of the object that appears;  $R_9(t)\sim \mbox{Unif}[0,20]$,  $R_{16}(t) \sim \mbox{Unif}[0,4]$, $R_a(t) \sim \mbox{Unif}[0,1]$ for $a \in \{2, \ldots, 20\} \setminus\{9,16\}$.  With these parameters, the most valuable objects tend to appear in location 9, which is the most difficult location to reach (see Fig. \ref{fig:robot-fig1}). The second most valuable location is 16, while all other locations tend to have low valued objects.  

In the algorithm of our paper, the robot must \emph{learn} to return home as quickly as possible after it collects an object.  However, it is useful to compare performance of our algorithm against the following two heuristic strategies that have a frame-based renewal structure and that are fine tuned with knowledge of the problem structure and probability distributions. 

\begin{itemize} 
\item Heuristic 1: Fix a parameter $\theta \in [0, 4)$; Starting from cell 1, move to cell 16 in 3 steps via the path $1\rightarrow 6 \rightarrow 11\rightarrow 16$ (ignoring all rewards  along the way); wait in cell 16 until an object appears with value $W_{16}(t)> \theta$; collect the object and return home in 3 steps; Repeat. Since $R_{16}(t)\sim \mbox{Unif}[0,4]$, we have $P[W_{16}(t)>\theta]=P[B_{16}(t)=1]P[R_{16}(t)> \theta]=\frac{4-\theta}{8}$.  The expected reward over one frame is $\expect{W_{16}(t)|W_{16}(t)> \theta}=\frac{1}{2}(\theta + 4)$. 
By renewal-reward theory, the time average reward of this policy is 
$$ \overline{r} = \frac{\frac{1}{2}(\theta + 4)}{5 +\frac{8}{4-\theta}} = 0.33616$$
where the numerical value is obtained by maximizing over $\theta \in [0,4)$, achieved at 
$\theta^*=1.6808$. 

\item Heuristic 2: Fix a parameter $\theta \in [0, 20)$; Starting from location 1, move to location 9 over any path that takes exactly 10 steps (ignoring all rewards along the way); stay in location 9 until we see an object with value $W_9(t)> \theta$; collect this and return home using any 10-step path. By renewal-reward theory, the time average reward is 
$$ \overline{r} = \frac{\frac{1}{2}(\theta + 20)}{19 + \frac{40}{20-\theta}} = 0.66791$$
where the numerical value is obtained by maximizing over $\theta \in [0, 20)$, 
achieved at  $\theta^*=12.690$. 
\end{itemize} 

With this reward distribution, it is desirable to visit the hard-to-get location 9 to obtain  higher-valued rewards. 
It  is not clear whether or not Heuristic 2 is optimal. However, our simulated results for the algorithm of this paper, implemented online over $10^6$ time slots, yields a time average reward  in a \emph{virtual system} of $\overline{r}=0.6672$,  and in the \emph{actual system} of $\overline{r}=0.6604$ (the concept of \emph{virtual} and \emph{actual} systems is made apparent when the algorithm is presented). This suggests that Heuristic 2 is either optimal or near optimal. It also suggests that our online algorithm learns to visit home so it can refresh its $Hold(t)$ state, learns to ignore low-valued objects, learns the shortest paths to and from location 9, learns near-optimal thresholding rules, all without knowing the distribution of the rewards. The reported values $0.6672$ and $0.6604$ for our algorithm are time averages over $t \in \{0, \ldots, 10^6\}$,  so these averages include  the  relatively small rewards earned early on when the robot  is just starting to learn  efficient behavior. 

\subsection{Prior work}

Our paper characterizes optimality of the stochastic problem in terms of a 
deterministic and nonconvex problem \eqref{eq:det1}-\eqref{eq:det5}. This deterministic problem 
  is reminiscent of 
linear programming representations of optimality for simpler Markov decision
problems that do not have the opportunistic scheduling aspect and that 
have finite state and action sets (see, for example, \cite{ross-prob}\cite{puterman}\cite{mine-mdp}).  
In principle, our
nonconvex problem  \eqref{eq:det1}-\eqref{eq:det5}
can be transformed into a convex problem by a nonlinear change of variables
similar to methods for linear fractional programming in \cite{boyd-convex}\cite{fox-linear-fractional-mdp}. 
The work \cite{fox-linear-fractional-mdp} 
uses the nonlinear transformation for offline computation of an optimal 
Markov decision policy. However, the approaches in \cite{boyd-convex}\cite{fox-linear-fractional-mdp}
do 
not help for our context because: (i) The deterministic problem \eqref{eq:det1}-\eqref{eq:det5} uses
abstract and unknown 
sets $\overline{\Gamma}_i$ that make
the resulting convex problem very complex; (ii)  We seek an online
solution with desirable time averages, and time averages are not preserved under nonlinear
transformations.  Classical descriptions
of optimality for dynamic programming and Markov
decision problems with general Borel spaces are 
in \cite{blackwell-discount-dp}\cite{maitra-dp}\cite{schal-dp}. Our characterization
is different from these classical approaches because it leverages 
the special opportunistic structure. In particular, it 
isolates the basic state variables into a finite set $\script{S}$ and treats the opportunistic
aspect of the problem via the compact and convex sets $\overline{\Gamma}_i$. This is 
useful because it connects directly with our proposed algorithm and enables complexity
and convergence to be determined by the size of the finite set $\script{S}$, independent
of the (possibly infinite) number of additional states added by $W(t)$. 

Our paper uses a classical Lyapunov drift technique pioneered by Tassiulas and Ephremides for 
stabilizing queueing networks \cite{tass-radio-nets}\cite{tass-server-allocation}. Specifically, 
we use an extended  \emph{drift-plus-penalty} method that incorporates
a penalty function to jointly minimize time average cost subject to stability of certain virtual
queues \cite{sno-text}. Such methods have been extensively used for opportunistic scheduling in data
networks with unknown arrival and channel 
probabilities \cite{tass-delayed-info}\cite{neely-energy-it}\cite{neely-fairness-ton}. Other opportunistic scheduling approaches are stochastic Frank-Wolfe methods 
\cite{prop-fair-down}\cite{vijay-allerton02}\cite{neely-frank-wolfe-ton},  fluid
model techniques \cite{stolyar-greedy}, and related dual and primal-dual 
approaches \cite{shroff-opportunistic}\cite{atilla-primal-dual-jsac}\cite{atilla-fairness-ton}\cite{stolyar-gpd-gen}. 

The drift-plus-penalty method was used to treat Markov decision 
problems (MDPs) with an opportunistic scheduling aspect 
in \cite{asynchronous-markov}\cite{neely-fractional-markov-allerton2011}. 
The work \cite{asynchronous-markov} is the most similar to the current paper. That work 
uses drift-plus-penalty theory on  
a virtual system.  However, the algorithm that runs on the virtual system requires knowledge of certain \emph{max-weight functionals} that depend on  unknown probability distributions. For this, it estimates the  
functionals by sampling over a window of past $W(t)$ samples (see also \cite{neely-mwl-tac}).  
This slows down learning time and makes a precise convergence analysis difficult. 
In contrast, the current paper develops a new layered stochastic optimization technique that operates online, on a single timescale, and 
does not require averaging over a window of past samples.

A related problem of robot navigation over a directed graph with randomly generated 
rewards at each node was considered in \cite{robot-routing2017}. There, the robot can accumulate the 
reward for any node it visits (without the constraint of only holding one object at a time). 
They provide NP-hardness
results for finite horizons and approximation results for infinite horizons by relating to a problem of finding a minimum cycle mean  in a weighted graph. Related problems of robot patrolling are considered in 
\cite{robot-patrolling} using a heuristic algorithm based on Jensen-Shannon divergence, and 
in \cite{robot-patrolling2} using  path decomposition and dynamic programming. The works 
\cite{robot-routing2017}\cite{robot-patrolling}\cite{robot-patrolling2} 
assume either static rewards or a  known probability distribution, and do 
not have the same opportunistic learning aspect as the current paper.  An MDP 
approach to a multi-robot environment sensing problem is expored in 
\cite{MDP-robot-opportunistic}. There, a random vector is to be estimated, different robots can observe noisy components of this vector by scanning different regions of the environment, and robots can opportunistically share information when they meet. The problem  is NP-complete and so the paper investigates greedy approximations. 

Online MDPs are 
treated in \cite{online-MDP-2009}\cite{xiaohan-online-MDP-journal}\cite{online-MDP2022}. 
The work \cite{online-MDP-2009} treats a known transition probability model but adversarial
costs that are revealed \emph{after} a decision is made. An $O(\sqrt{T})$ regret algorithm is developed
using online convex programming and a quasi-stationary assumption on the Markov chain. 
The model is extended in \cite{xiaohan-online-MDP-journal} to allow time varying constraint costs and coupled multi-chains, again with $O(\sqrt{T})$ regret,  see also a recent treatment in \cite{online-MDP2022}. 
MDPs where transition probabilities are allowed to vary slowly over time are considered in \cite{epsilon-mdp}. The above works have a different structure from the current paper and do not have an opportunistic learning aspect. Also, the works \cite{online-MDP-2009}\cite{xiaohan-online-MDP-journal}\cite{online-MDP2022} take a 2-timescale approach and transform the decision variables to a vector in a ``policy class,'' which  requires each decision to solve a linear program associated with a stationary distribution for a certain ``time-$t$'' MDP. In contrast, the current paper operates on one timescale and uses an easier ``max-weight'' type decision on every slot.

\subsection{Our contributions} 

Similar to \cite{asynchronous-markov}, our paper focuses on a virtual system. However, our virtual 
system has a simpler structure that directly connects to the actual system.  We prove  the virtual
and actual systems have the same optimal cost, which is described by a deterministic nonconvex optimization problem.  Next, we develop an algorithm where the virtual system observes the current $W(t)$ and chooses a \emph{contingency action} $A_i(t)$ for each $i \in \script{S}$. The virtual system also replaces the actual Markov state variable $S(t)$ with a related decision variable $\pi(t)$ that can be chosen as desired. 
Equivalence between virtual and actual systems is enforced by imposing time averaged global balance constraints. Our algorithm uses a novel layered structure and, unlike \cite{asynchronous-markov}, does not require estimation over a window of past samples. This enables explicit performance guarantees for time average expected cost. Specifically, for any desired $\epsilon>0$, we show the virtual system achieves within $O(\epsilon)$ of optimality with convergence time $1/\epsilon^2$. The complexity and convergence time are independent of the size of the random vector $W(t)$, which is allowed to have an arbitrarily large dimension.  

This virtual algorithm also provides a simple online algorithm for the actual system. We show that for all time $t$, the actual and virtual systems have the same \emph{conditional} transition probability and \emph{conditional} expected cost given the current state. Specifically, with $S^{a}(t)$ and $S^{v}(t)$ being the state of the \emph{actual} and \emph{virtual} systems, respectively, we have 
\begin{align*}
&P[S^{a}(t+1)=j|S^{a}(t)=i] = P[S^{v}(t+1)=j|S^{v}(t)=i]  \\
&\expect{C_l^{a}(t)|S^{a}(t)=i} = \expect{C_l^{v}(t)|S^{v}(t)=i} 
\end{align*}
for all $i,j \in \script{S}$,  $l \in \{0, \ldots, k\}$, $t \in \{0, 1, 2, \ldots\}$. We conjecture that (under 
mild additional assumptions that enforce an ``irreducible'' type property on the system) the actual and virtual systems have similar \emph{unconditional}  probabilities and expected costs. The conjecture is shown in simulation to hold for the robot example.  The resulting algorithm is simple, online, does not know the probability distribution, but achieves time average reward that is close to that of the best common sense heuristic that is fine tuned with knowledge of the distribution.

\subsection{Terminology} 

Let $\mathbb{N}=\{1, 2, 3, \ldots\}$. Fix $n,m\in\mathbb{N}$. The $L_2$ norm for  $x\in\mathbb{R}^n$ is $\norm{x}=\sqrt{\sum_{i=1}^nx_i^2}$. The $L_1$ norm is $\norm{x}_1=\sum_{i=1}^n|x_i|$.  

A \emph{measurable space} is a pair $(D,\script{G})$ where $D$ is a nonempty set and $\script{G}$ is a sigma algebra on $D$.  Unless otherwise stated:  If $(D_i, \script{G}_i)$ are measurable spaces for $i \in \{1, \ldots, m\}$, 
the sigma algebra for $\times_{i=1}^n D_i$ is assumed to be the standard product space sigma algebra; If $D$ is a Borel measurable subset of $\mathbb{R}^n$ then its sigma algebra is assumed to be $\script{B}(D)$, the standard Borel sigma algebra on $D$. 

A \emph{Borel space} is a measurable space that is isomorphic to $(C, \mathcal{B}(C))$ for some  $C\in\script{B}([0,1])$.   
An example Borel space is $(D,\script{B}(D))$ for some nonempty Borel measurable set $D \subseteq\mathbb{R}^n$. Another example is $(D, Pow(D))$ where $D$ is any nonempty finite or countably infinite set and $Pow(D)$ is the set of all subsets of $D$.  It is known that the finite or countably infinite product of Borel spaces is again a Borel space. 

Given a probability space $(\Omega, \script{F}, P)$ and some measurable space 
$(D,\script{G})$, a \emph{random element} is a  function $X:\Omega\rightarrow D$ that is measurable with respect to  input space $(\Omega, \script{F})$ and  output space $(D, \script{G})$, that is, $X^{-1}(B)\in\script{F}$ for all $B \in \script{G}$. A \emph{random variable} is a random element with output space $(D,\script{B}(D))$ for some  $D \in \script{B}(\mathbb{R})$.  We say $U\sim \mbox{Unif}[0,1]$ to indicate $U:\Omega\rightarrow [0,1]$ is a random variable  that is uniformly distributed over $[0,1]$.


 

\section{System model} 


Fix integers $n>0, k\geq 0$. The basic states are $\script{S}=\{1, \ldots, n\}$.   Let $c_{max}$ be a finite bound on the magnitude of all costs, so 
$$|C_l(t)|\leq c_{max} \quad \forall l \in \{0, \ldots, k\}, t \in \{0, 1, 2, \ldots\}$$
For each $i\in\script{S}$, let $(\script{A}_i, \script{G}_i)$ be a Borel space that is used for the actions $A_i(t)$ when $S(t)=i$.  Let $(\script{W}, \script{G}_W)$ be a measurable space (not necessarily a Borel space) 
that is used for the random elements $W(t)$.  
Define bounded and measurable \emph{cost functions} and \emph{transition probability functions} of the form 
\begin{align*}
&c_{i,l}:\script{W}\times\script{A}_i\rightarrow [-c_{max},c_{max}]\\
&p_{i,j}:\script{W}\times\script{A}_i\rightarrow [0,1]
\end{align*}
for $i,j \in \script{S}$ and $l \in \{0, \ldots, k\}$. The $p_{i,j}$ functions satisfy 
$$ \sum_{j=1}^np_{i,j}(w,a)=1 \quad \forall i \in \script{S},w \in \script{W}, a \in \script{A}_i$$

\subsection{Probability space} 

The probability space is $(\Omega, \script{F}, P)$.     Let $\{W(t)\}_{t=0}^{\infty}$, $\{U(t)\}_{t=0}^{\infty}$, $\{V(t)\}_{t=-1}^{\infty}$ be mutually independent, where 

\begin{itemize} 
\item $\{W(t)\}_{t=0}^{\infty}$ are i.i.d. 
random elements that take values in $\script{W}$. 

\item $\{U(t)\}_{t=0}^{\infty}$ are i.i.d. $\mbox{Unif}[0,1]$ (generated in software at the controller to enable  randomized actions).  

\item  $\{V(t)\}_{t=-1}^{\infty}$ are i.i.d. $\mbox{Unif}[0,1]$ (generated by ``nature'' to determine state transitions from  $S(t)$ to $S(t+1)$). 
\end{itemize} 

Let $\{S(t)\}_{t=0}^{\infty}$ be the sequence of basic states. 
Fix an initial state $s_0\in\script{S}$ and assume $S(0)=s_0$ surely. 
Define $H(0)=0$ and for $t \in \{1, 2, 3, \ldots\}$ define the \emph{history} $H(t)$ by
\begin{equation} \label{eq:history}
H(t)=(S(0), ..., S(t-1), W(0), ...,W(t-1), U(0), \ldots, U(t-1))
\end{equation} 
Define $D_H(t)$ as the set of possible values of $H(t)$ (so $D_H(0)=\{0\}$;  $D_H(t)= \script{S}^{t} \times \script{W}^{t}\times [0,1]^{t}$ for $t\in\{1, 2, 3, \ldots\}$).   Let $\script{G}_H(t)$ be the product sigma algebra on $D_H(t)$.

\subsection{Control policies} \label{section:control}

A \emph{causal and measurable policy} is a sequence of measurable functions $\{\alpha_{t}\}_{t=0}^{\infty}$ of the type:
\begin{align*}
&\alpha_{t}:\script{W}\times[0,1]\times D_H(t)\rightarrow \script{A}
\end{align*}
where $\script{A}=\times_{i=1}^n\script{A}_i$ and $\alpha_t=(\alpha_{t,1}, \ldots, \alpha_{t,n})$. 
These specify $A(t)=(A_1(t), \ldots, A_n(t))$ for each $t \in \{0, 1, 2, \ldots\}$ by 
\begin{equation} \label{eq:general-action} 
A_i(t) = \alpha_{t,i}(W(t), U(t), H(t)) \quad \forall i \in \script{S} 
\end{equation} 
One can view $U(t)$  as a random number generated by software at the controller at the start of each slot $t$.  The general structure \eqref{eq:general-action} allows $A_i(t)$ to be based on $U(0), \ldots, U(t)$. 
A causal and measurable policy is said to be \emph{memoryless} if it does not depend on $H(t)$, so 
$A(t) = \hat{\alpha}_{t}(W(t),U(t))$ 
for some measurable functions $\hat{\alpha}_{t}:\script{W}\times [0,1]\rightarrow\script{A}$ for $t \in \{0, 1, 2, \ldots\}$.

\subsection{State transitions} 

Recall that $S(0)=s_0$ surely. Define $\script{P}$ as the probability simplex on $\script{S}$: 
$$ \script{P} = \mbox{$\{(q_1, \ldots, q_n) : \sum_{i=1}^nq_i=1, q_i\geq 0\quad \forall i \in \{1, \ldots, n\}\}$}$$
Define a measurable function $\psi:\script{P}\times [0,1]\rightarrow \script{S}$  that takes a probability mass function $q\in \script{P}$ and a random seed $V(t) \sim \mbox{Unif}[0,1]$ and chooses a random state $\psi(q,V(t)) \in \script{S}$ according to mass function $q$.\footnote{One can use $\psi(q,r)=1$ if $r \in [0,q_1)$, $\psi(q,r)=2$ if $r \in [q_1, q_1+q_2)$, and so on.}
Define 
\begin{align}
S(t+1) = \sum_{i\in\script{S}} 1_{\{S(t)=i\}} \psi\left([p_{i,1}(W(t),A(t)), \ldots, p_{i,n}(W(t),A(t))], V(t)\right)\label{eq:s-def}
\end{align} 
Then $S(t+1)$ is a measurable function of $(S(t), W(t),A(t), V(t))$. This implies $S(t+1)$ is conditionally independent of $H(t)$ given $(S(t), W(t), A(t), V(t))$.  Therefore, the Markov property holds and  transition probabilities indeed satisfy \eqref{eq:pij}.  Since a measurable function of a random element is again a random element, by induction it can be shown that any causal and measurable control policy gives rise to valid random elements $\{S(t)\}_{t=0}^{\infty}$, $\{H(t)\}_{t=0}^{\infty}$, $\{A(t)\}_{t=0}^{\infty}$,  $\{C_l(t)\}_{t=0}^{\infty}$.\footnote{The random variable $V(-1)$ is never used on the system. Its existence formally allows a representation theorem in Appendix C.}

\subsection{Communicating classes} 

A $n \times n$ matrix $P=(P_{i,j})$ is a \emph{transition probability matrix} if it has nonnegative entries and each row sums to 1.   It is well known that such a matrix has at least one $\pi \in \script{P}$ that satisfies the  \emph{global balance equation} 
\begin{equation} \label{eq:GBE}
\pi_j = \sum_{i \in \script{S}} \pi_i P_{i,j} \quad \forall j \in \script{S}
\end{equation}
The matrix $P$  is \emph{irreducible}  if for all  $i,j \in \script{S}$, there is a positive integer $m$ and a sequence of states $s_0, s_1, \ldots, s_m$ in $\script{S}$ such that $P_{s_a,s_{a+1}}>0$ for all $a \in \{0, 1, ..., m-1\}$. 
It is well known that: (i) 
If the $n \times n$ transition probability matrix $P$ is irreducible then the global balance equation has a 
unique solution $\pi \in \script{P}$, and this unique solution has all positive entries; (ii) Regardless of irreducibility, $P$ 
has a unique collection of disjoint communicating classes $\script{S}^{(1)},\ldots, \script{S}^{(c)}$ (for some $c \in \{1, \ldots, n\}$), where each $\script{S}^{(j)}$ is a nonempty subset of $\script{S}$ and the Markov chain is irreducible when restricted to states in $\script{S}^{(j)}$. 

\section{Optimality} 

\subsection{The sets $\Gamma_i$}

We describe the set of all possible expectations of cost and transition probability for a given slot. Define 
$$\script{I} = \{(l,j) : l \in \{0, 1, ..., k\}, j \in \script{S}\}$$
Fix $i \in \script{S}$. 
Let $\script{C}_i$ be the collection of all measurable functions $\alpha_i:\script{W}\times[0,1]\rightarrow\script{A}_i$. Define  $g_i:\script{W}\times \script{A}\rightarrow \mathbb{R}^{|\script{I}|}$ by 
\begin{equation} \label{eq:g}
g_i(w,a) = (c_{i,l}(w,a); p_{i,j}(w,a))_{(l,j)\in\script{I}} \quad \forall w\in\script{W}, a \in \script{A}_i
\end{equation} 
Since  $g_i(W(0),A_i(0))$ is a bounded random vector, it has finite expectations. For notational simplicity, define $W=W(0)$, $U=U(0)$. Recall that $W$ and $U$ are independent and $U\sim \mbox{Unif}[0,1]$.  Define $\Gamma_i \subseteq\mathbb{R}^{|\script{I}|}$ for $i \in \script{S}$ by
\begin{equation} \label{eq:gamma-def} 
\Gamma_i = \{\expect{g_i(W, \alpha_i(W,U))} : \alpha_i \in \script{C}_i\}
\end{equation} 
Define $\overline{\Gamma}_i$ as the closure of set $\Gamma_i$. 
It can be shown that  $\overline{\Gamma}_i$ is compact and convex (see Appendix B). 

\subsection{Optimal cost} 

Consider the following deterministic optimization problem with decision variables $\pi_i, c_{i,l}, p_{i,j}$ for $i,j\in\script{S}$ and $l \in \{0, \ldots, k\}$.
\begin{align}
\mbox{Minimize:} &\quad \sum_{i \in \script{S}} \pi_i c_{i,0} \label{eq:det1} \\
\mbox{Subject to:} &\quad \pi_j = \sum_{i\in \script{S}} \pi_i p_{i,j} \quad \forall j \in \script{S} \label{eq:det2}\\
&\quad \sum_{i\in \script{S}} \pi_i c_{i,l}\leq 0 \quad \forall l \in \{1, \ldots, k\} \label{eq:det3}\\
&\quad (\pi_1, \ldots, \pi_n) \in \script{P} \label{eq:det4}\\
&\quad ((c_{i,l}); (p_{i,j})) \in \overline{\Gamma}_i \quad \forall i \in \script{S}\label{eq:det5}
\end{align}
This problem \eqref{eq:det1}-\eqref{eq:det5} is nonconvex due to  the multiplication of variables in \eqref{eq:det1}, \eqref{eq:det2}, \eqref{eq:det3}.  The problem is said to be \emph{feasible} if it is possible to satisfy the constraints 
\eqref{eq:det2}-\eqref{eq:det5}. Assuming feasibility, define $c_0^*$ as the infimum objective value in \eqref{eq:det1} subject to the constraints \eqref{eq:det2}-\eqref{eq:det5}. The next three results of this subsection are proven in Appendix B.

\begin{lem}\label{lem:exist} If problem \eqref{eq:det1}-\eqref{eq:det5} is feasible then an optimal solution $((\pi_i); (c_{i,l}); (p_{i,j}))$ for this problem exists for which
\begin{equation} \label{eq:equal-c0}
c_0^*=\sum_{i\in\script{S}} \pi_ic_{i,0}
\end{equation} 
and with $(p_{i,j})$ being a transition probability matrix with a communicating class 
$\script{S}'\subseteq\script{S}$ such that: (i) $\pi_i=0$ if $i \notin \script{S}'$; (ii) submatrix
$(p_{i,j})_{i,j\in\script{S}'}$ is irreducible;  (iii) $(\pi_i)_{i\in\script{S}'}$ is the unique solution to the stationary equations \eqref{eq:det2} over states in $\script{S}'$. 
\end{lem}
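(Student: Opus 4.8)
The plan is to obtain existence of an optimal solution from a compactness argument, and then to replace any optimal solution by one whose state distribution $\pi$ is concentrated on a single recurrent communicating class.

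\emph{Existence.} The decision variables lie in $\script{P}\times\prod_{i\in\script{S}}\overline{\Gamma}_i$, which is compact since $\script{P}$ is compact and each $\overline{\Gamma}_i$ is compact by Appendix~B. Constraint \eqref{eq:det5} defines a closed set because each $\overline{\Gamma}_i$ is closed, and \eqref{eq:det2}, \eqref{eq:det3} are polynomial equalities and inequalities in the decision variables, hence closed conditions; so the feasible set is compact, and it is nonempty by hypothesis. By continuity of the objective \eqref{eq:det1} on a compact set the infimum $c_0^*$ is attained, which gives an optimal solution satisfying \eqref{eq:equal-c0}.

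\emph{Reduction.} Let $((\pi_i),(c_{i,l}),(p_{i,j}))$ be optimal and let $P=(p_{i,j})$, with recurrent (closed) communicating classes $\script{S}^{(1)},\ldots,\script{S}^{(c)}$. By \eqref{eq:det2} and \eqref{eq:det4}, $\pi$ is a stationary distribution of $P$, so it vanishes on transient states and, on each recurrent class, is a scalar multiple of that class's unique stationary distribution; hence $\pi=\sum_r\lambda_r\mu^{(r)}$ with $\lambda_r\geq 0$, $\sum_r\lambda_r=1$, and $\mu^{(r)}\in\script{P}$ the unique stationary distribution of the irreducible submatrix $(p_{i,j})_{i,j\in\script{S}^{(r)}}$, padded with zeros. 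For each $r$ with $\lambda_r>0$, consider the ``restricted'' solution that keeps $(c_{i,l}),(p_{i,j})$ on $\script{S}^{(r)}$, sets the state distribution to $\mu^{(r)}$, and uses arbitrary points of $\overline{\Gamma}_i$ off $\script{S}^{(r)}$; since $\script{S}^{(r)}$ is $P$-closed, this still satisfies \eqref{eq:det2}, \eqref{eq:det4}, \eqref{eq:det5}, with objective $J_0^{(r)}:=\sum_{i\in\script{S}^{(r)}}\mu^{(r)}_i c_{i,0}$ and $l$-th constraint value $J_l^{(r)}:=\sum_{i\in\script{S}^{(r)}}\mu^{(r)}_i c_{i,l}$. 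Averaging with the weights $\lambda_r$ returns the original objective and constraint values, so $\sum_r\lambda_r J_0^{(r)}=c_0^*$ and $\sum_r\lambda_r J_l^{(r)}\leq 0$ for $l\in\{1,\ldots,k\}$. If some $r^*$ with $\lambda_{r^*}>0$ has $J_0^{(r^*)}\leq c_0^*$ and $J_l^{(r^*)}\leq 0$ for all $l\geq 1$, then its restricted solution is feasible, so optimality forces $J_0^{(r^*)}=c_0^*$, and that solution satisfies (i)--(iii) with $\script{S}'=\script{S}^{(r^*)}$.

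\emph{Main obstacle.} What remains, and is the crux, is producing such an $r^*$. For $k=0$ this is immediate: take a class minimizing $J_0^{(r)}$, whose value is at most the average $c_0^*$; equivalently, with the optimal $(c_{i,l}),(p_{i,j})$ frozen, solving the resulting linear program in $\pi$ at a vertex already returns a single $\mu^{(r)}$. For $k\geq 1$ the constraints couple the classes: a convex combination whose first coordinate equals $c_0^*$ and whose remaining coordinates are $\leq 0$ need not contain any single class that simultaneously beats $c_0^*$ on the objective and meets every constraint, and a vertex of the stationary simplex cut by the $k$ constraint half-spaces can activate up to $k+1$ of the $\mu^{(r)}$. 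Driving this count down to one in the constrained case is where I expect the real work to lie; it is plausible this needs an additional reachability- or irreducibility-type hypothesis on the system --- so that otherwise-separated recurrent classes can effectively be merged into one --- beyond the assumptions stated so far.
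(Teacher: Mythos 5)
Your route is the same as the paper's: existence by compactness of $\script{P}\times\prod_i\overline{\Gamma}_i$ and continuity, then decompose the optimal $\pi$ over the closed communicating classes of $(p_{i,j})$ and try to restrict the solution to a single class. Your write-up of the decomposition and of the ``restricted'' solution (checking \eqref{eq:det2}, \eqref{eq:det4}, \eqref{eq:det5} on a closed class) is actually more explicit than the paper's, and your $k=0$ argument is complete and correct.

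The step you leave open is precisely the step the paper does not supply either: its proof of Lemma \ref{lem:exist} ends with the one-line assertion that some class $\script{S}^{(c)}$ can carry all of $\pi$ while keeping \eqref{eq:det3} and \eqref{eq:equal-c0}, with no argument. Your suspicion that this needs more than the stated hypotheses is justified. Consider $n=2$, $k=1$, trivial $W(t)$, singleton action sets with $p_{1,1}=p_{2,2}=1$, $(c_{1,0},c_{1,1})=(-1,1)$, $(c_{2,0},c_{2,1})=(0,-1)$, so each $\Gamma_i$ is a closed singleton. Problem \eqref{eq:det1}-\eqref{eq:det5} is feasible, its unique optimal solution is $\pi=(1/2,1/2)$ with $c_0^*=-1/2$, and the only available transition matrix is the identity, whose communicating classes are $\{1\}$ and $\{2\}$: support on $\{1\}$ violates \eqref{eq:det3}, support on $\{2\}$ gives objective $0>c_0^*$. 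Hence no optimal solution has the structure (i)--(iii), and the class-selection step fails in general for $k\geq 1$ unless one adds something like the reachability or class-merging hypothesis you suggest (or the ability to time-share across classes). So your proposal is not a complete proof of the lemma as stated, but the gap it identifies is real and coincides with the unproven assertion in the paper's own proof; only the $k=0$ case is settled by either argument.
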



 \begin{thm}\label{thm:converse} (Converse) Suppose the stochastic problem \eqref{eq:p1}-\eqref{eq:p3} is feasible under a given initial state $s_0\in\script{S}$. Then the deterministic problem \eqref{eq:det1}-\eqref{eq:det5} is also feasible. Further, if  $S(0)=s_0$ surely and if $\{A(t)\}_{t=0}^{\infty}$ are causal and measurable control actions of the form \eqref{eq:general-action} that ensure the constraints \eqref{eq:p2} are satisfied 
 almost surely, the resulting $C_0(t)$ process satisfies 
\begin{align}
&\liminf_{T\rightarrow\infty} \frac{1}{T}\sum_{t=0}^{T-1} C_0(t) \geq c_0^* \quad \mbox{(almost surely)} \label{eq:converse}\\
&\liminf_{T\rightarrow\infty}  \frac{1}{T}\sum_{t=0}^{T-1} \expect{C_0(t)} \geq c_0^* \label{eq:fatou}
\end{align}
 \end{thm}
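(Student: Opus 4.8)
The plan is to turn any causal and measurable policy satisfying \eqref{eq:p2} almost surely into a sequence of (approximately) feasible points of the deterministic problem \eqref{eq:det1}--\eqref{eq:det5} built from empirical time averages, and then extract a convergent subsequence whose limit is an \emph{exactly} feasible point whose objective value equals $\liminf_T\frac1T\sum_{t=0}^{T-1}C_0(t)$. Since that point is feasible, its objective is at least $c_0^*$, which gives \eqref{eq:converse}; the deterministic problem is feasible because this very construction exhibits a feasible point; and \eqref{eq:fatou} then follows from \eqref{eq:converse} by Fatou's lemma applied to the nonnegative random variables $\frac1T\sum_{t=0}^{T-1}C_0(t)+c_{max}$.

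First I would fix the right filtration. Let $\script{H}_t=\sigma(W(0),\ldots,W(t-1),U(0),\ldots,U(t-1),V(0),\ldots,V(t-1))$, with $\script{H}_0$ trivial. By induction from \eqref{eq:s-def} and \eqref{eq:general-action}, $S(\tau)$ is $\script{H}_t$-measurable for $\tau\leq t$ and $H(t)$ is $\script{H}_t$-measurable, while $(W(t),U(t),V(t))$ is independent of $\script{H}_t$. For each $i$ and each fixed history value $h$, the section $(w,u)\mapsto\alpha_{t,i}(w,u,h)$ belongs to $\script{C}_i$, so $\phi_{t,i}(h):=\expect{g_i(W,\alpha_{t,i}(W,U,h))}$ is well defined, measurable in $h$, and lies in $\Gamma_i\subseteq\overline{\Gamma}_i$; the standard substitution rule for a function of an independent pair gives $\expect{g_i(W(t),A_i(t))\mid\script{H}_t}=\phi_{t,i}(H(t))$ almost surely. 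This is the crucial reduction: although $A_i(t)$ may use the whole history, conditioned on $\script{H}_t$ it acts like a \emph{memoryless} rule, so its one-slot cost/transition statistics sit inside $\overline{\Gamma}_i$.

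The second step is a martingale strong law. Define the $\mathbb{R}^{|\script{I}|}$-valued differences $Z_i(t)=1_{\{S(t)=i\}}\big(g_i(W(t),A_i(t))-\phi_{t,i}(H(t))\big)$ and $Y_j(t)=1_{\{S(t+1)=j\}}-\sum_{i\in\script{S}}1_{\{S(t)=i\}}p_{i,j}(W(t),A_i(t))$. By the previous paragraph $\expect{Z_i(t)\mid\script{H}_t}=0$, and conditioning $Y_j(t)$ on $\sigma(\script{H}_t,W(t),U(t))$ together with the independence of $V(t)$ and \eqref{eq:s-def} gives $\expect{Y_j(t)\mid\sigma(\script{H}_t,W(t),U(t))}=0$; both are bounded and $\script{H}_{t+1}$-measurable, so $\frac1T\sum_{t=0}^{T-1}Z_i(t)\to 0$ and $\frac1T\sum_{t=0}^{T-1}Y_j(t)\to 0$ almost surely for all $i,j$. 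On the almost sure event where these hold, fix $\omega$ and set $\hat\pi_i(T)=\frac1T\sum_{t=0}^{T-1}1_{\{S(t)=i\}}$ and, when $\hat\pi_i(T)>0$, $\hat\gamma_i(T)=\big(\frac1T\sum_{t=0}^{T-1}1_{\{S(t)=i\}}\phi_{t,i}(H(t))\big)/\hat\pi_i(T)$ (an arbitrary fixed point of $\overline{\Gamma}_i$ otherwise); being a convex combination of points $\phi_{t,i}(H(t))\in\Gamma_i$, we have $\hat\gamma_i(T)\in\overline{\Gamma}_i$. Writing $\hat\gamma_i(T)=((\hat c_{i,l}(T));(\hat p_{i,j}(T)))$, the two limits and the telescoping identity $\frac1T\sum_{t=0}^{T-1}1_{\{S(t+1)=j\}}=\hat\pi_j(T)+O(1/T)$ give, for $l\in\{0,\ldots,k\}$ and $j\in\script{S}$,
\begin{align*}
\frac1T\sum_{t=0}^{T-1}C_l(t)&=\sum_{i\in\script{S}}\hat\pi_i(T)\hat c_{i,l}(T)+o(1), & \hat\pi_j(T)&=\sum_{i\in\script{S}}\hat\pi_i(T)\hat p_{i,j}(T)+o(1).
\end{align*}

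Finally, since $\script{P}$ and every $\overline{\Gamma}_i$ are compact, choose a subsequence $T_m\to\infty$ along which $\frac1{T_m}\sum_{t=0}^{T_m-1}C_0(t)\to\liminf_T\frac1T\sum_{t=0}^{T-1}C_0(t)$ and $\hat\pi(T_m)\to\pi$, $\hat\gamma_i(T_m)\to\gamma_i=((c_{i,l});(p_{i,j}))\in\overline{\Gamma}_i$ for all $i$. Passing to the limit in the two displays (terms with $\pi_i=0$ drop out by boundedness), $((\pi_i);(c_{i,l});(p_{i,j}))$ satisfies \eqref{eq:det2}, \eqref{eq:det4}, and \eqref{eq:det5}; and since almost sure feasibility of the policy forces $\limsup_T\frac1T\sum_{t=0}^{T-1}C_l(t)\leq 0$ for $l\in\{1,\ldots,k\}$, it also satisfies \eqref{eq:det3}. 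Hence the deterministic problem is feasible and this point is feasible, so $\sum_i\pi_ic_{i,0}\geq c_0^*$, whence $\liminf_T\frac1T\sum_{t=0}^{T-1}C_0(t)=\sum_i\pi_ic_{i,0}\geq c_0^*$, which is \eqref{eq:converse}; \eqref{eq:fatou} follows as noted. I expect the main obstacle to be the conditioning/measurability step of the second paragraph — rigorously justifying that the history-dependent action reduces, given $\script{H}_t$, to an element of $\script{C}_i$, and that $h\mapsto\phi_{t,i}(h)$ is measurable; this is precisely where the engineered mutual independence of the $\{W(t)\},\{U(t)\},\{V(t)\}$ families and the representation results of the appendices are needed, and it also underlies the claim (used above) that $\overline{\Gamma}_i$ is compact and convex. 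Everything downstream is compactness, a telescoping sum, the martingale strong law, and Fatou's lemma.
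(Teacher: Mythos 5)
Your proposal is correct, and at the level of the overall architecture it matches the paper's proof: empirical occupation measures $\hat\pi_i(T)$ and normalized averages in $\overline{\Gamma}_i$, a martingale strong law (the paper's Lemma \ref{lem:LLN}) to replace realized per-slot quantities by their conditional means, the within-one counting identity \eqref{eq:within1} to get the global balance constraint in the limit, a subsequence chosen along the liminf together with compactness and convexity of $\overline{\Gamma}_i$, and Fatou applied to $C_0(t)+c_{max}$ for \eqref{eq:fatou}. Where you genuinely diverge is in the key sub-step that the per-slot conditional statistics lie in $\overline{\Gamma}_i$ (the paper's Lemma \ref{lem:causal-in-gamma}): you exhibit an explicit version of the conditional expectation, $\phi_{t,i}(H(t))$ with $\phi_{t,i}(h)=\expect{g_i(W,\alpha_{t,i}(W,U,h))}\in\Gamma_i$ pointwise, by taking sections of the jointly measurable policy \eqref{eq:general-action} and invoking the substitution (freezing) rule for $(W(t),U(t))$ independent of your filtration $\script{H}_t$; the paper instead shows that \emph{any} version of $\expect{g_i(W(t),A_i(t))\mid H(t),S(t)}$ lies in $\overline{\Gamma}_i$ almost surely, via the representation of the compact convex set $\overline{\Gamma}_i$ as a countable intersection of half-spaces, a positive-probability event contradiction, and the representation result Lemma \ref{lem:coupling}. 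Your route is more elementary and is legitimate here precisely because the theorem hypothesis gives the action in the explicit functional form \eqref{eq:general-action}; note, though, that your closing remark is slightly off: for your argument the appendix representation lemma is not what is needed — the standard freezing lemma (valid on arbitrary measurable spaces, given joint measurability and Fubini for the measurability of $h\mapsto\phi_{t,i}(h)$) suffices, whereas the paper needs Lemma \ref{lem:coupling} because it argues about arbitrary versions of the conditional expectation (and, elsewhere, about actions not given in functional form). What you still rely on from the appendices is the convexity/compactness of $\overline{\Gamma}_i$ and the martingale law of large numbers, exactly as the paper does. Splitting the martingale differences into the $g_i$-part $Z_i(t)$ and the transition-count part $Y_j(t)$, rather than bundling them as in the paper's $X_i(t)$, is an immaterial repackaging.
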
 
 
 \begin{thm}  \label{thm:achievability} (Achievability) Suppose set $\Gamma_i\subseteq\mathbb{R}^{|\script{I}|}$ is closed for each $i \in \script{S}$.  
 If the deterministic problem \eqref{eq:det1}-\eqref{eq:det5} is feasible, then there is an initial
 state $s_0 \in \script{S}$  for which  the stochastic problem \eqref{eq:p1}-\eqref{eq:p3} is feasible. Further, there exists a measurable function $\alpha:\script{W}\times[0,1]\rightarrow\script{A}$ such that when $S(0)=s_0$ surely, using the memoryless actions $A^*(t)=\alpha(W(t), U(t))$ for $t \in\{0, 1, 2, \ldots\}$ yields costs  that almost surely satisfy 
 \begin{align*}
&\lim_{T\rightarrow\infty} \frac{1}{T}\sum_{t=0}^{T-1}C_0^*(t) = c_0^* \\
 &\lim_{T\rightarrow\infty} \frac{1}{T}\sum_{t=0}^{T-1}C_l^*(t) \leq 0 \quad \forall l \in \{1, \ldots, k\} 
 \end{align*}
 and these limiting time averages are almost surely the same as the corresponding limiting time average expectations. 
 \end{thm}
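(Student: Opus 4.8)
The strategy is to turn an optimal solution of the deterministic problem into a memoryless policy and then invoke standard ergodic theory for finite-state Markov chains. Since problem \eqref{eq:det1}-\eqref{eq:det5} is feasible, Lemma \ref{lem:exist} provides an optimal solution $((\pi_i);(c_{i,l});(p_{i,j}))$ with $\sum_{i\in\script{S}}\pi_i c_{i,0}=c_0^*$ and with $(p_{i,j})$ a transition probability matrix possessing a communicating class $\script{S}'$ on which the chain is irreducible, $(\pi_i)_{i\in\script{S}'}$ is the (necessarily positive) stationary vector there, and $\pi_i=0$ for $i\notin\script{S}'$. A short observation is that $\script{S}'$ is \emph{closed} under $(p_{i,j})$: combining the global balance \eqref{eq:det2} with $\pi_i=0$ off $\script{S}'$ gives $\sum_{i\in\script{S}'}\pi_i p_{i,j}=0$ for every $j\notin\script{S}'$, and since $\pi_i>0$ on $\script{S}'$ this forces $p_{i,j}=0$ whenever $i\in\script{S}'$ and $j\notin\script{S}'$.

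Next I construct the policy. Because each $\Gamma_i$ is closed we have $\overline{\Gamma}_i=\Gamma_i$, so constraint \eqref{eq:det5} together with the definition \eqref{eq:gamma-def} of $\Gamma_i$ yields, for each $i\in\script{S}$, a measurable contingency rule $\alpha_i\in\script{C}_i$ with $\expect{g_i(W,\alpha_i(W,U))}=((c_{i,l});(p_{i,j}))$; that is, $\alpha_i$ realizes exactly the target per-slot expected costs $c_{i,l}$ and transition probabilities $p_{i,j}$. Set $\alpha=(\alpha_1,\ldots,\alpha_n)$, which is a measurable map into $\script{A}$, use the memoryless actions $A^*(t)=\alpha(W(t),U(t))$, and fix $S(0)=s_0$ for some chosen $s_0\in\script{S}'$. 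Since $A_i^*(t)$ depends only on $(W(t),U(t))$, which is independent of the history $H(t)$ and hence of $S(t)$, the process $\{S(t)\}$ is a time-homogeneous Markov chain with transition matrix $(p_{i,j})$, because $P[S(t+1)=j\mid S(t)=i]=\expect{p_{i,j}(W,\alpha_i(W,U))}=p_{i,j}$ (the $j$-component of $\expect{g_i}$). As $\script{S}'$ is closed and $s_0\in\script{S}'$, the chain stays in $\script{S}'$, where it is irreducible on a finite set, hence positive recurrent with unique stationary distribution $(\pi_i)_{i\in\script{S}'}$. A memoryless policy is causal and measurable, so this will also establish feasibility of the stochastic problem for this $s_0$ once the constraint limits are computed.

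For the time averages, write $C_l(t)=c_{S(t),l}(W(t),\alpha_{S(t)}(W(t),U(t)))$ and decompose $C_l(t)=\bar C_l(t)+D_l(t)$, where $\bar C_l(t)=\expect{C_l(t)\mid H(t),S(t)}=c_{S(t),l}$; this identity uses that $(W(t),U(t))$ is independent of $(H(t),S(t))$ together with the defining property of $\alpha_{S(t)}$. Then $\bar C_l(t)=\sum_{i\in\script{S}}1_{\{S(t)=i\}}c_{i,l}$ is a bounded function of the ergodic finite chain, so by the strong law of large numbers for Markov chains $\frac{1}{T}\sum_{t=0}^{T-1}\bar C_l(t)\to\sum_{i\in\script{S}}\pi_i c_{i,l}$ almost surely (periodicity of the chain on $\script{S}'$ is irrelevant for Ces\`aro averages). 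The residual sequence $\{D_l(t)\}$ is a bounded martingale difference with respect to the filtration generated by $(H(t),S(t))$ (again by the independence of $(W(t),U(t))$ and $|D_l(t)|\le 2c_{max}$), so $\frac{1}{T}\sum_{t=0}^{T-1}D_l(t)\to0$ almost surely, e.g.\ by Kolmogorov's criterion $\sum_t t^{-2}\expect{D_l(t)^2}<\infty$. Hence $\frac{1}{T}\sum_{t=0}^{T-1}C_l(t)\to\sum_{i\in\script{S}}\pi_i c_{i,l}$ almost surely; this limit equals $c_0^*$ for $l=0$ by Lemma \ref{lem:exist} and is $\le0$ for $l\ge1$ by \eqref{eq:det3}. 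In particular $\overline{C}_l=\limsup=\lim=\sum_i\pi_ic_{i,l}$, which gives both the achievability claims and feasibility of \eqref{eq:p1}-\eqref{eq:p3} for this $s_0$. Finally, since $|C_l(t)|\le c_{max}$, dominated convergence upgrades the almost sure convergence of the running averages to convergence of $\frac{1}{T}\sum_{t=0}^{T-1}\expect{C_l(t)}$ to the same limit, so the limiting time-average expectations match the almost sure limits.

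The main obstacle is the third step: one must carefully justify the joint behavior of the finite chain $\{S(t)\}$ and the exogenous i.i.d.\ inputs $(W(t),U(t))$. The delicate point is that $(W(t),U(t))$ is \emph{not} independent of $S(t+1)$ (it drives the transition), so identifying the right filtration that makes $D_l(t)$ a genuine martingale difference, and confirming the ergodic theorem applies to the possibly periodic chain on $\script{S}'$, requires precision. An equivalent route replaces the martingale decomposition by a direct renewal-reward argument over return times to a fixed state $i^*\in\script{S}'$, using the i.i.d.\ cycle structure and the identity $\expect{\#\text{visits to }i\text{ per }i^*\text{-cycle}}=\pi_i/\pi_{i^*}$; either way the same care with the chain/noise interplay is needed. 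The remaining ingredients (Lemma \ref{lem:exist}, the closedness argument for $\script{S}'$, the selection of $\alpha_i$ built into \eqref{eq:gamma-def}, and the bounded-convergence step) are routine.
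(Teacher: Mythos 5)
Your proposal is correct and follows essentially the same route as the paper's own proof: invoke Lemma \ref{lem:exist}, use closedness of $\Gamma_i$ to extract measurable contingency rules $\alpha_i$ realizing $((c_{i,l});(p_{i,j}))$, start the chain at $s_0\in\script{S}'$, and conclude via the ergodic behavior of the resulting finite irreducible Markov chain on $\script{S}'$. The only difference is that you spell out steps the paper leaves implicit (the global-balance argument that $\script{S}'$ is closed, and the martingale-difference plus Markov-chain SLLN argument for the cost time averages), which is a faithful filling-in rather than a different approach.
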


The assumption that sets $\Gamma_i$ are compact is crucial for
Theorem \ref{thm:achievability} to be stated in terms of a single memoryless
action function $\alpha:\script{W}\times[0,1]\rightarrow\script{A}$.
A counter-intuitive example by Blackwell in  \cite{blackwell-memoryless} 
shows that memoryless actions are insufficient for systems defined on Borel
sets with nonBorel projections,  see also \cite{savage-gamble-selection}. 

Theorem \ref{thm:achievability} requires a proper choice of initial state $s_0 \in \script{S}$. Specifically, 
$s_0$ should be chosen as any state in the desirable communicating class $\script{S}'$ described in  
Lemma \ref{lem:exist}. The 
 transition probability matrix $(p_{i,j})$ of Lemma \ref{lem:exist} can also have \emph{undesirable} communicating classes
over which the global balance equations are satisfied but optimal cost is not achieved.  Our simulations of the robot example in Section \ref{section:simulation} show these undesirable communicating classes can arise in certain cases when it is optimal to restrict the robot to a small subset of locations (so an ``irreducible'' type property fails).  As shown in Section \ref{section:simulation}, the \emph{virtual system} is unaffected by this 
issue. However, the \emph{actual system} can get trapped in an undesirable communicating class that prevents its unconditional probabilities from matching those of the virtual system (even though the two have the same conditional distributions). Fortunately, this situation is easy to detect and a simple online fix is discussed in Section \ref{section:simulation} that ``kicks'' the actual system out of trapping states and thereby maintains desirable unconditional behavior.


\subsection{Lagrange multipliers}

While problem \eqref{eq:det1}-\eqref{eq:det5} is nonconvex, it can be shown that 
the set of all vectors of the form 
\begin{equation}
(\pi; (\pi_i c_{i,l})_{i\in\script{S}, l \in\{0, ..., k\}}; (\pi_i p_{ij})_{i,j\in\script{S}}) \label{eq:convex-points}
\end{equation} 
for some values $\pi=(\pi_1, \ldots, \pi_n), c_{i,l}, p_{i,j}$ that satisfy 
\begin{align}
&\pi \in \script{P} \label{eq:convex-points2}\\
&((c_{i,l});(p_{i,j}))\in\overline{\Gamma}_i \quad \forall i \in \script{S} \label{eq:convex-points3}
\end{align} 
is a convex set. 
Therefore, the following additional \emph{Lagrange multiplier assumption} is mild. 

\begin{assumption} \label{assumption:LM} (Lagrange multipliers) The problem \eqref{eq:det1}-\eqref{eq:det5} is feasible with optimal objective $c_0^*$, and there are real numbers $\mu_l\geq 0$ for $l \in \{0, \ldots, k\}$ and  $\lambda_j\in \mathbb{R}$ for $j \in \script{S}$ such that 
$$ \sum_{i\in \script{S}} \pi_ic_{i,0} + \sum_{j\in\script{S}}\lambda_j\left(\pi_j - \sum_{i \in \script{S}} \pi_ip_{i,j}\right) + \sum_{l=1}^k \mu_l \left(\sum_{i\in\script{S}} \pi_ic_{i,l}\right)\geq c_0^*$$
for all $(\pi; (c_{i,l});(p_{i,j}))$ that satisfy \eqref{eq:convex-points2}-\eqref{eq:convex-points3}.
\end{assumption} 

Assumption \ref{assumption:LM} (together with Lemma \ref{lem:in-gamma} in Appendix B) implies that for any random element $W:\Omega\rightarrow\script{W}$ with the same distribution as $W(0)$, any random element $\pi:\Omega\rightarrow\script{P}$ that is independent of $W$, and 
 any random element $A:\Omega\rightarrow\script{A}$ with arbitrary dependence on $(\pi, W)$ we have 
\begin{align}
\sum_{i\in\script{S}} \expect{\pi_i c_{i,0}(W,A_i)}
+ \sum_{j\in\script{S}} \lambda_j\expect{\pi_j-\sum_{i\in\script{S}}\pi_ip_{i,j}(W,A_i)} 
+ \sum_{l=1}^k\mu_l\sum_{i\in\script{S}} \expect{\pi_i c_{i,l}(W,A_i)} \geq c_0^* \label{eq:LM-consequence}
\end{align}

\section{Algorithm development}

\subsection{K-L divergence} 

Recall that $\script{P}$ is the simplex of all probability mass functions on $\script{S}$. Define $\script{P}_o\subseteq\script{P}$ by 
$$ \script{P}_o=\{(\pi_1, \ldots, \pi_n) : \sum_{i=1}^n\pi_i=1,\pi_i>0 \quad \forall i \}$$
Define $D:\script{P}\times \script{P}_o\rightarrow\mathbb{R}$ as the Kullback-Leibler (K-L) divergence function:
$$ D(p;q) = \sum_{i=1}^n p_i\log(p_i/q_i) \quad \forall p\in\script{P}, q \in \script{P}_o$$
where $\log(\cdot)$ denotes the natural logarithm. 
It is well known that $D(\cdot)$ is a nonnegative function that satisfies
\begin{align}
&D(p; (1/n, \ldots, 1/n)) \leq  \log(n) \quad \forall p \in \script{P}\label{eq:logn}\\
&D(p;q) \geq \frac{1}{2}\norm{p-q}_1^2 \quad \forall p \in \script{P}, q \in \script{P}_o \label{eq:pinsker}
\end{align}
where $\norm{x}_1=\sum_{i=1}^n|x_i|$. Inequality \eqref{eq:pinsker} is the \emph{Pinsker inequality}.  The following \emph{pushback lemma} is standard \cite{tseng-accelerated}\cite{xiaohan-selm}\cite{SGD-robust}:

\begin{lem} \label{lem:pushback} (Pushback)  Fix $q\in \script{P}_o$, $\alpha\geq0$. Let 
$A\subseteq\script{P}$ be a convex set and  $f:A\rightarrow\mathbb{R}$ a convex function.  Suppose $p^{opt}$ solves
\begin{align}
\mbox{Minimize:} \quad & f(p) + \alpha D(p;q) \label{eq:push1}\\
\mbox{Subject to:} \quad & p \in A \label{eq:push2}
\end{align}
If $p^{opt} \in \script{P}_o$ then:
\begin{equation} \label{eq:pushback} 
f(p^{opt}) + \alpha D(p^{opt};q) \leq f(p) + \alpha D(p;q) - \alpha D(p,p^{opt}) \quad \forall p \in A
\end{equation} 
\end{lem}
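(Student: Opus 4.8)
The plan is to exploit the first-order optimality condition for the convex program \eqref{eq:push1}--\eqref{eq:push2}, combined with the well-known three-point identity for the K-L divergence. First I would observe that since $p^{opt}$ is assumed to lie in the open simplex $\script{P}_o$, the divergence term $\alpha D(p;q)$ is differentiable at $p^{opt}$ with gradient $\alpha(\log(p^{opt}_i/q_i) + 1)_{i=1}^n$ (the constant $1$ coming from $\frac{d}{dx}(x\log x) = \log x + 1$). The objective $f(p) + \alpha D(p;q)$ is convex on the convex set $A$, so optimality of $p^{opt}$ gives the variational inequality: for every $p \in A$ there is a subgradient $s \in \partial f(p^{opt})$ with $\langle s + \alpha(\log(p^{opt}/q) + \mathbf{1}),\, p - p^{opt}\rangle \geq 0$. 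Here I am using that $p - p^{opt}$ is a feasible direction for any $p \in A$ by convexity of $A$.

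Next I would convert the inner product involving the logarithms into divergences via the three-point identity
\begin{equation*}
\langle \log(p^{opt}/q),\, p - p^{opt}\rangle = D(p;q) - D(p;p^{opt}) - D(p^{opt};q),
\end{equation*}
which is a direct algebraic expansion (and the $\mathbf{1}$ term contributes $\langle \mathbf{1}, p - p^{opt}\rangle = 0$ since both $p$ and $p^{opt}$ are probability vectors). Substituting this into the variational inequality yields
\begin{equation*}
\langle s,\, p - p^{opt}\rangle + \alpha\big(D(p;q) - D(p;p^{opt}) - D(p^{opt};q)\big) \geq 0,
\end{equation*}
i.e. $\alpha D(p^{opt};q) + \alpha D(p;p^{opt}) \leq \langle s, p - p^{opt}\rangle + \alpha D(p;q)$. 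Finally I would invoke the subgradient inequality for the convex function $f$, namely $\langle s, p - p^{opt}\rangle \leq f(p) - f(p^{opt})$, to replace the linear term and obtain
\begin{equation*}
f(p^{opt}) + \alpha D(p^{opt};q) + \alpha D(p;p^{opt}) \leq f(p) + \alpha D(p;q),
\end{equation*}
which rearranges to the claimed inequality \eqref{eq:pushback}.

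The main obstacle, and the only place where the hypothesis $p^{opt} \in \script{P}_o$ is truly essential, is justifying differentiability of $D(\cdot;q)$ at $p^{opt}$ and the validity of the first-order optimality condition there: if $p^{opt}$ had a zero coordinate, the gradient of $x\log x$ would blow up and one would need a more delicate limiting argument (or the inequality could fail). A secondary technical point is handling the subgradient of $f$ correctly when $f$ is merely convex rather than differentiable — but since $f$ is finite-valued on the convex set $A$ and the constraint $p \in A$ is the only constraint, the separating-hyperplane/KKT argument goes through in the form stated above. The cancellation $\langle \mathbf{1}, p - p^{opt}\rangle = 0$ is what makes the argument clean and is worth flagging explicitly, since it is exactly what lets the affine shift in the gradient of the entropy be ignored.
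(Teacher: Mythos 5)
Your argument is correct and is essentially the standard proof the paper relies on: the paper states Lemma \ref{lem:pushback} without proof, citing it as standard, and the standard argument proceeds exactly as you do, via the first-order optimality (variational) inequality at $p^{opt}$, the three-point identity $\langle \log(p^{opt}/q), p-p^{opt}\rangle = D(p;q)-D(p;p^{opt})-D(p^{opt};q)$, the cancellation $\langle \mathbf{1}, p-p^{opt}\rangle=0$ on the simplex, and the subgradient inequality for $f$. One small refinement: if $p^{opt}$ sits on the relative boundary of $A$, a subgradient $s\in\partial f(p^{opt})$ need not exist a priori, but this is harmless---either note that optimality together with finiteness of the gradient of $D(\cdot;q)$ at $p^{opt}\in\script{P}_o$ forces every directional derivative of $f$ into $A$ to be bounded below (so $\partial f(p^{opt})\neq\emptyset$), or simply run the same computation with difference quotients along the segment from $p^{opt}$ to $p$ and let the step size tend to zero, which yields \eqref{eq:pushback} without ever invoking a subgradient.
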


\subsection{Time averaged problem} 

The algorithm observes the i.i.d. $\{W(t)\}_{t=0}^{\infty}$ and makes contingency actions $A_i(t) \in \script{A}_i$ for all $i \in \script{S}$ and $t \in \{0, 1, 2, \ldots\}$. 
It introduces a decision vector $\pi(t)= (\pi_1(t), \ldots, \pi_n(t)) \in \script{S}$ that intuitively represents the state probability on a virtual system where we can choose any state in $\script{S}$ that we like on each new slot.  The goal is to make these decisions over time to solve
\begin{align}
\mbox{Minimize:} &\quad \overline{\sum_{i \in \script{S}} \pi_i(t)c_{i,0}(W(t),A_i(t))} \label{eq:r1}\\
\mbox{Subject to:} &\quad \overline{\pi_j(t) - \sum_{i\in\script{S}}\pi_i(t)p_{i,j}(W(t),A_i(t))} = 0 \quad \forall j \in \script{S}\label{eq:r2}\\
&\quad \overline{\sum_{i \in \script{S}} \pi_i(t)c_{i,l}(W(t),A_i(t))}\leq0 \quad \forall l \in \{1, \ldots, k\}\label{eq:r3} \\
&\quad \pi(t) \in \script{P} \quad \forall t \in \{0, 1, 2, ...\} \label{eq:r4}\\
&\quad A_i(t) \in \script{A}_i  \quad \forall i \in \script{S}, t \in \{0,1,2,...\}\label{eq:r5}\\
&\quad \mbox{$\pi(t)$ and $W(t)$ are independent for each $t \in \{0, 1, 2, ...\}$} \label{eq:r6}
\end{align}
where the overbar notation in \eqref{eq:r1}, \eqref{eq:r2}, \eqref{eq:r3} denotes a limiting time average, for example 
$$ \overline{\sum_{i \in \script{S}}\pi_i(t)c_{i,l}(W(t),A_i(t))}  = \limsup_{T\rightarrow\infty} \frac{1}{T}\sum_{t=0}^{T-1}\sum_{i \in \script{S}} \pi_i(t)c_{i,l}(W(t),A_i(t))$$
The above problem is a time averaged version of the deterministic problem 
\eqref{eq:det1}-\eqref{eq:det5}.  Specifically, the constraints \eqref{eq:r2}-\eqref{eq:r4} directly relate to the deterministic constraints \eqref{eq:det2}-\eqref{eq:det4}. The constraints \eqref{eq:r2} shall be called the \emph{global balance constraints}. The global balance constraints play a crucial role in the learning process. 
The constraints \eqref{eq:r5}, \eqref{eq:r6} together correspond to the deterministic constraint \eqref{eq:det5}. Constraint \eqref{eq:r6} is a nonstandard and subtle constraint that prevents $\pi(t)$ from being influenced by the realization of $W(t)$. Without \eqref{eq:r6}, the  decisions for $\pi(t)$ would be biased towards those states for which there is a favorable realization of $W(t)$, which would not correspond to any solution that could be achieved on the actual system. Our method of enforcing \eqref{eq:r6} ensures an algorithm complexity that depends only on the size of the finite set $\script{S}$ (which is $n$), rather than the size of the (possibly infinite) set $\script{S}\times\script{W}$.

\subsection{Virtual queues}

For $t \in \{0, 1, 2, \ldots\}$ define the $n \times 1$ vector $G_0(t)$ by 
$$G_0(t) = (c_{1,0}(W(t), A_1(t)), \ldots, c_{n,0}(W(t),A_n(t)))$$
Define the 
 $n \times n$ matrix $Y(t)=(Y_{i,j}(t))$ and the $n \times k$ matrix $G(t)=(G_{i,l}(t))$ by 
 \begin{align*} 
Y_{i,j}(t)&= 1_{\{i=j\}} - p_{i,j}(W(t),A_i(t)) \quad \forall i,j\in\script{S}\\
G_{i,l}(t) &= c_{i,l}(W(t), A_i(t))) \quad \forall i\in\script{S},l\in\{1, \ldots, k\}
\end{align*}
where $1_{\{i=j\}}$ is 1 if $i=j$, and 0 else. 
For initialization, define $G_0(-1)=-(c_{max}, \ldots, c_{max})$, $Y(-1)=0$, $G(-1)=0$.

Using the virtual queue technique of \cite{sno-text} 
to enforce the constraints \eqref{eq:r2}, \eqref{eq:r3}, define processes $Q_j(t)$ and $Z_l(t)$ with initial conditions $Q_j(0)=0$, $Z_l(0)=0$ and update equation for $t\in \{0, 1, 2, \ldots\}$  given by 
\begin{align}
Q_j(t+1)&= Q_j(t) + \pi(t)^{\top}Y(t-1)y_j  \quad \forall j \in \script{S} \label{eq:q-update}\\
Z_l(t+1) &= \max\left[Z_l(t) + \pi(t)^{\top}G(t-1)g_l , 0\right]  \quad \forall l \in \{1, \ldots, k\} \label{eq:z-update}
\end{align}
where 
$\pi^{\top}(t) = (\pi_1(t), \ldots, \pi_n(t))$ is a row vector; $y_j\in \mathbb{R}^n$ is the unit vector that is 1 in entry $j$ and $0$ in all other entries; $g_l \in \mathbb{R}^k$ is the unit vector that is 1 in entry $j$ and $0$ in all other entries.  In particular, $Y(t-1)y_j$ selects the $j$th column of matrix $Y(t-1)$, while $G(t-1)g_l$ selects the $l$th column of matrix $G(t-1)$. 

\begin{lem} Consider the iterations \eqref{eq:q-update}, \eqref{eq:z-update} with any decisions $\pi(t)\in\script{P}$ and $A_i(t) \in \script{A}_i$ for $i \in \script{S}$ and $t\in \{0, 1, 2, \ldots\}$.  For all positive integers $T$ we have
\begin{align}
&\left|\frac{1}{T}\sum_{t=0}^{T-1}\pi(t)^{\top}Y(t)y_j\right| \leq \left|\frac{Q_j(T+1)}{T}\right|+ \frac{1}{T}\sum_{t=1}^{T}\norm{\pi(t)-\pi(t-1)}_1 \quad \forall j \in \script{S}\label{eq:vqq}\\
&\frac{1}{T}\sum_{t=0}^{T-1}\pi(t)^{\top}G(t)g_l \leq \frac{Z_l(T+1)}{T} + \frac{c_{max}}{T}\sum_{t=1}^T\norm{\pi(t)-\pi(t-1)}_1 \quad \forall l \in \{1, \ldots, k\} \label{eq:vqz}
\end{align}
\end{lem}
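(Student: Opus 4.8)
The plan is to treat the two inequalities \eqref{eq:vqq} and \eqref{eq:vqz} by the standard telescoping-of-virtual-queues argument, but with care taken to account for the one-slot delay between the queue update at time $t$ (which uses $Y(t-1)$, $G(t-1)$) and the quantity $\pi(t)^\top Y(t)y_j$ we actually want to bound. First I would telescope the $Q_j$ recursion \eqref{eq:q-update}: summing from $t=0$ to $T$ gives $Q_j(T+1) - Q_j(0) = \sum_{t=0}^{T}\pi(t)^\top Y(t-1)y_j$, and since $Q_j(0)=0$ and $Y(-1)=0$ kills the $t=0$ term, this is $\sum_{t=1}^{T}\pi(t)^\top Y(t-1)y_j$. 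Re-indexing $s=t-1$, that equals $\sum_{s=0}^{T-1}\pi(s+1)^\top Y(s)y_j$. So $Q_j(T+1) = \sum_{t=0}^{T-1}\pi(t+1)^\top Y(t)y_j$.

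The second step is to replace $\pi(t+1)$ by $\pi(t)$ inside this sum at the cost of a drift term. Writing $\pi(t+1)^\top Y(t)y_j = \pi(t)^\top Y(t)y_j + (\pi(t+1)-\pi(t))^\top Y(t)y_j$, I would sum and rearrange to get
\begin{equation*}
\sum_{t=0}^{T-1}\pi(t)^\top Y(t)y_j = Q_j(T+1) - \sum_{t=0}^{T-1}(\pi(t+1)-\pi(t))^\top Y(t)y_j.
\end{equation*}
Now bound the error term: each column $Y(t)y_j$ has entries $1_{\{i=j\}}-p_{i,j}(W(t),A_i(t))$, each of magnitude at most $1$, so $\|Y(t)y_j\|_\infty \le 1$, whence $|(\pi(t+1)-\pi(t))^\top Y(t)y_j| \le \|\pi(t+1)-\pi(t)\|_1$. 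Taking absolute values, dividing by $T$, and re-indexing the drift sum ($\sum_{t=0}^{T-1}\|\pi(t+1)-\pi(t)\|_1 = \sum_{t=1}^{T}\|\pi(t)-\pi(t-1)\|_1$) yields exactly \eqref{eq:vqq}.

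For \eqref{eq:vqz} the argument is the same but with the max operation in \eqref{eq:z-update} handled by the usual one-sided inequality $Z_l(t+1) \ge Z_l(t) + \pi(t)^\top G(t-1)g_l$, which telescopes to $Z_l(T+1) - Z_l(0) \ge \sum_{t=0}^{T}\pi(t)^\top G(t-1)g_l$; with $Z_l(0)=0$ and $G(-1)=0$ this gives $Z_l(T+1) \ge \sum_{t=0}^{T-1}\pi(t+1)^\top G(t)g_l$. Then shift $\pi(t+1)\to\pi(t)$ as before; here the column $G(t)g_l$ has entries $c_{i,l}(W(t),A_i(t))$ of magnitude at most $c_{max}$, so the error term is bounded by $c_{max}\|\pi(t+1)-\pi(t)\|_1$, and dividing by $T$ gives \eqref{eq:vqz}. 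The only mildly delicate point — and the one I would be most careful about — is the bookkeeping of the one-slot delay together with the initializations $Y(-1)=0$, $G(-1)=0$, $G_0(-1)$: these are precisely what make the $t=0$ boundary term vanish so that the re-indexed sum lines up with $\sum_{t=0}^{T-1}\pi(t)^\top Y(t)y_j$ rather than a shifted range. Everything else is routine.
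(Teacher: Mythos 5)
Your proposal is correct and follows essentially the same route as the paper's proof: telescope the queue recursions using $Q_j(0)=Z_l(0)=0$ and $Y(-1)=G(-1)=0$ (with the one-sided inequality for the $\max$ in the $Z_l$ update), then shift $\pi$ by one slot and bound the resulting error term by $\norm{\pi(t)-\pi(t-1)}_1$ times the entrywise bounds $1$ and $c_{max}$. The only difference is cosmetic index bookkeeping (you re-index before splitting $\pi(t+1)=\pi(t)+(\pi(t+1)-\pi(t))$, the paper splits $\pi(t)=\pi(t-1)+(\pi(t)-\pi(t-1))$ and re-indexes at the end), so nothing further is needed.
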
 

\begin{proof} 
Fix $j \in \script{S}$. Since $Q_j(0)=0$, for all positive integers $T$ we have 
\begin{align*}
Q_j(T+1) &=  \sum_{t=0}^{T}(Q_j(t+1)-Q_j(t))\\
&\overset{(a)}{=} \sum_{t=0}^{T} \pi(t)^{\top}Y(t-1)y_j \\
&\overset{(b)}{=} \sum_{t=1}^T\pi(t)^{\top}Y(t-1)y_j \\
&=\sum_{t=1}^T\pi(t-1)^{\top}Y(t-1)y_j + \sum_{t=1}^T(\pi(t)-\pi(t-1))^{\top}Y(t-1)y_j
\end{align*}
where (a) holds by \eqref{eq:q-update}; (b) holds because $Y(-1)=0$. 
Thus 
\begin{align*}
\left|\sum_{t=1}^{T}\pi(t-1)^{\top}Y(t-1)y_j\right| \leq  \left|Q_j(T+1)\right| +   \sum_{t=1}^T\norm{\pi(t)-\pi(t-1)}_1 
\end{align*}
which holds because the largest magnitude of any component of $Y(t-1)y_j$ is 1. Dividing both sides by $T$ proves \eqref{eq:vqq}. 

Fix $l \in \{1, \ldots, k\}$. The update \eqref{eq:z-update} implies 
$$ Z_l(t+1) \geq Z_l(t) + \pi(t)^{\top}G(t-1)g_l  \quad \forall t \in \{0, 1, 2, \ldots\}$$
Summing over $t \in \{0, \ldots, T\}$ for some positive integer $T$ and using $Z_l(0)=0$ gives 
\begin{align*}
Z_l(T+1) &\geq \sum_{t=0}^T \pi(t)^{\top}G(t-1)g_l \\
&\overset{(a)}{=}\sum_{t=1}^T\pi(t)^{\top}G(t-1)g_l \\
&\geq \sum_{t=1}^T\pi(t-1)^{\top}G(t-1)g_l - c_{max}\sum_{t=1}^T\norm{\pi(t)-\pi(t-1)}_1 
\end{align*}
where (a) holds because $G_l(-1)=0$; the final inequality holds because $c_{max}$ bounds the largest magnitude of any component of vector $G(t-1)g_l$. 
Dividing this by $T$ yields \eqref{eq:vqz}. 
\end{proof}

\subsection{Lyapunov drift}

For $t \in \{0, 1, 2, \ldots\}$ define 
\begin{align*}
Q(t) &= (Q_1(t), \ldots, Q_n(t)) \\
Z(t) &= (Z_1(t), \ldots, Z_k(t)) \\
J(t) &= (Q(t);Z(t)) 
\end{align*} 
$$L(t) = \frac{1}{2}\norm{J(t)}^2=\frac{1}{2}\sum_{j=1}^n Q_j(t)^2+ \frac{1}{2}\sum_{l=1}^kZ_l(t)^2 $$
Define $\Delta(t) = L(t+1)-L(t)$.

\begin{lem} \label{lem:Delta} For all $t\in\{0, 1, 2, \ldots\}$ and all choices of the decision variables we have 
\begin{align*}
\Delta(t) &\leq b_1 + \pi(t)^{\top}Y(t-1)Q(t) +\pi(t)^{\top}G(t-1)Z(t) \end{align*}
where 
$b_1 = \frac{n+kc_{max}^2}{2}$. 
\end{lem}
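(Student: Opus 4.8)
The plan is to bound $\Delta(t) = L(t+1) - L(t)$ by expanding the squares in $L(t)$, using the update equations \eqref{eq:q-update} and \eqref{eq:z-update} to substitute for $Q_j(t+1)$ and $Z_l(t+1)$, and then controlling the quadratic remainder terms with the boundedness of the relevant matrices.

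First I would handle the $Q$ part. Write $Q_j(t+1) = Q_j(t) + \pi(t)^\top Y(t-1)y_j$, square it, and sum over $j$ to get
\begin{align*}
\frac{1}{2}\sum_{j=1}^n Q_j(t+1)^2 - \frac{1}{2}\sum_{j=1}^n Q_j(t)^2 = \sum_{j=1}^n Q_j(t)\,\pi(t)^\top Y(t-1)y_j + \frac{1}{2}\sum_{j=1}^n\left(\pi(t)^\top Y(t-1)y_j\right)^2 .
\end{align*}
The first term on the right is exactly $\pi(t)^\top Y(t-1)Q(t)$ since $\sum_j Q_j(t)\, y_j = Q(t)$. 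For the second term, note each $Y_{i,j}(t-1) = 1_{\{i=j\}} - p_{i,j}(W(t-1),A_i(t-1)) \in [-1,1]$, and since $\pi(t)\in\script{P}$ is a probability vector, $|\pi(t)^\top Y(t-1)y_j| \le 1$ for every $j$; hence $\frac{1}{2}\sum_{j=1}^n(\cdot)^2 \le n/2$.

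Next I would handle the $Z$ part. The max operation in \eqref{eq:z-update} only helps: $Z_l(t+1)^2 = \max[Z_l(t) + \pi(t)^\top G(t-1)g_l, 0]^2 \le (Z_l(t) + \pi(t)^\top G(t-1)g_l)^2$. Expanding and summing over $l$ gives $\frac{1}{2}\sum_l Z_l(t+1)^2 - \frac{1}{2}\sum_l Z_l(t)^2 \le \pi(t)^\top G(t-1)Z(t) + \frac{1}{2}\sum_{l=1}^k(\pi(t)^\top G(t-1)g_l)^2$. Since $|G_{i,l}(t-1)| = |c_{i,l}(W(t-1),A_i(t-1))| \le c_{max}$ and $\pi(t)$ is a probability vector, each term $|\pi(t)^\top G(t-1)g_l| \le c_{max}$, so the remainder is at most $kc_{max}^2/2$. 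Adding the two parts yields $\Delta(t) \le \frac{n+kc_{max}^2}{2} + \pi(t)^\top Y(t-1)Q(t) + \pi(t)^\top G(t-1)Z(t) = b_1 + \pi(t)^\top Y(t-1)Q(t) + \pi(t)^\top G(t-1)Z(t)$, as claimed.

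This proof is essentially routine; there is no genuine obstacle, only bookkeeping. The one point worth care is the use of the identity $\sum_{j=1}^n Q_j(t)\,\pi(t)^\top Y(t-1)y_j = \pi(t)^\top Y(t-1)Q(t)$ — this is just linearity, writing $Q(t) = \sum_j Q_j(t)\,y_j$ and pulling the scalar $Q_j(t)$ inside — and the analogous identity for the $Z$ terms. The other thing to be careful about is that the bounds $|\pi(t)^\top Y(t-1)y_j| \le 1$ and $|\pi(t)^\top G(t-1)g_l| \le c_{max}$ rely on $\pi(t)$ lying in the simplex $\script{P}$, which is guaranteed by constraint \eqref{eq:r4}/the algorithm's feasibility, so these hold for all choices of decision variables as stated.
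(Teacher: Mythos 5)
Your proof is correct and is exactly the argument the paper intends: the paper's own proof is a one-line remark that the result follows by squaring the updates \eqref{eq:q-update}, \eqref{eq:z-update} and using $\max[z,0]^2\leq z^2$, which is precisely what you carried out, with the correct bounds $|\pi(t)^{\top}Y(t-1)y_j|\leq 1$ and $|\pi(t)^{\top}G(t-1)g_l|\leq c_{max}$ giving $b_1=\frac{n+kc_{max}^2}{2}$.
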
 

\begin{proof} 
The result follows by 
squaring \eqref{eq:q-update} and \eqref{eq:z-update} and using $\max[z,0]^2\leq z^2$ for $z \in \mathbb{R}$, which is a standard Lyapunov optimization technique (see, for example, \cite{sno-text}). 
\end{proof} 

Fix $V>0, \alpha>0$ as parameters to be sized later.  Define the following \emph{drift-plus-penalty} expression
$$ \Delta(t) + V\pi(t)^{\top}G_0(t-1) + \alpha D(\pi(t);\pi(t-1))$$
Our proposed algorithm makes decisions that minimize an upper bound on this expression. Intuitively, $\Delta(t)$ can be viewed as a drift term whose inclusion helps to maintain small values of the virtual queues. The term $V\pi(t)^{\top}G_0(t-1)$ is a penalty (weighted by the $V$ parameter) whose inclusion helps to maintain small values of the objective in \eqref{eq:r1}. We would like the actual objective to be $\pi(t)G_0(t)$, but we shall choose $\pi(t) \in \script{P}$ based on knowledge of $G_0(t-1)$, rather than $G_0(t)$, to ensure $\pi(t)$ is independent of $W(t)$ (recall the independence requirement \eqref{eq:r6}). To reduce the error associated with using $G_0(t-1)$ instead of $G_0(t)$, the additional penalty $D(\pi(t); \pi(t-1))$ is included (weighted by the $\alpha$ parameter) to ensure $\pi(t)$ and $\pi(t-1)$ are close. 
From Lemma \ref{lem:Delta} the drift-plus-penalty expression has the following upper bound
\begin{align}
&\Delta(t) + V\pi(t)^{\top}G_0(t-1) + \alpha D(\pi(t);\pi(t-1))\nonumber\\
&\leq b_1   +\pi(t)^{\top}\left[V G_0(t-1)+ Y(t-1)Q(t)+  G(t-1)Z(t)\right] + \alpha D(\pi(t);\pi(t-1)) \label{eq:DPP}
\end{align}
The following algorithm uses a layered structure to make the right-hand-side of the above expression small. 

\subsection{Online algorithm} 

Initialize:
\begin{align*}
&Q_j(-1)=Y_{i,j}(-1)=0 \quad \forall i,j \in \script{S}\\
&Z_l(-1)=G_{i,l}(-1)=0 \quad \forall l \in \{1, \ldots, k\}, i\in\script{S}\\
&G_0(-1)=-(c_{max}, \ldots, c_{max})\\
&\pi(-1)=(1/n, 1/n, ...., 1/n) \in \script{P}
\end{align*}
On each slot $t \in \{0, 1, 2, \ldots\}$ do: 

\begin{itemize} 
\item Layer 1:  Ignore $W(t)$. Choose $\pi(t) \in \script{P}$ to minimize 
\begin{equation}\label{eq:layer1} 
\pi(t)^{\top}\left[VG_0(t-1) + Y(t-1)Q(t)+ G(t-1)Z(t) \right] + \alpha D(\pi(t);\pi(t-1))
\end{equation} 
treating $VG_0(t-1)$, $Y(t-1)Q(t)$, $G(t-1)Z(t)$, $\pi(t-1)$ as known constants. This has the following solution $\pi(t) \in \script{P}_o$:
$$ \pi_i(t) = \frac{\pi_i(t-1)\exp(-M_i(t)/\alpha)}{\sum_{j=1}^n\pi_j(t-1)\exp(-M_j(t)/\alpha)} \quad \forall i \in \script{S}$$
where $M_i(t)$ is defined for $i \in \script{S}$ by 
$$ M_i(t) = y_i^{\top}\left(VG_{0}(t-1) + Y(t-1)Q(t) + G(t-1)Z(t)\right)$$
where $y_i\in\mathbb{R}^n$ is the unit vector that is 1 in entry $i$ and $0$ in all other entries. 
\item Layer 2: Observe $W(t)$. For each $i \in \script{S}$ choose $A_i(t) \in \script{A}_i$ to minimize:\footnote{For simplicity we assume a minimizer $A_i(t)\in \script{A}_i$ exists. This holds when $\script{A}_i$ is a finite set. More generally we can use a $\delta$-approximation to the infimum, for some small $\delta>0$, and all performance theorems hold by increasing the $b$ constant in Lemma \ref{lem:key} by $\delta$ (so there is no significant change in the results). Of course, for general problems, the complexity of computing a $\delta$-approximation may depend on the size of the virtual queue weights $Z_l(t)$ and $Q_j(t)$.}
\begin{equation} \label{eq:layer2}
Vc_{i,0}(W(t),A_i(t)) + \sum_{l=1}^kZ_l(t)c_{i,l}(W(t),A_i(t))- \sum_{j=1}^nQ_j(t)p_{i,j}(W(t),A_i(t))
\end{equation} 
treating $W(t)$, $Z_l(t)$, $Q_j(t)$ as known constants. 
\item  Virtual queue update: Update virtual queues by \eqref{eq:q-update} and \eqref{eq:z-update}.
\item Actual system implementation: Observe the actual system state $S(t) \in \script{S}$. Apply action $A_{S(t)}$.
\end{itemize}

\subsection{Discussion} 

To analyze the algorithm, we ignore the actual system state dynamics $\{S(t)\}_{t=0}^{\infty}$.  Instead, the sequence $\{\pi(t), W(t), A(t)\}_{t=0}^{\infty}$ can be viewed as a virtual system that replaces the actual state $S(t)$ with a mass function $\pi(t) \in \script{P}$. 
For any $\epsilon>0$, we choose $\alpha = 1/\epsilon^2$ and $V=\rho/\epsilon$ for some constant $\rho>0$, such as $\rho=1$, and prove that our decision variables satisfy 
\begin{align}
&\frac{1}{T}\sum_{t=0}^{T-1}\expect{\pi(t)G_0(t)} \leq c_0^* + O(\epsilon) \label{eq:perf1}\\
&\frac{1}{T}\expect{\norm{J(T)}} \leq O(\epsilon)\label{eq:perf2}\\
&\expect{\left|\frac{1}{T}\sum_{t=0}^{T-1}\pi(t)^{\top}Y(t)y_j\right|} \leq O(\epsilon) \quad \forall j \in \script{S} \label{eq:perf3}\\
&\expect{\frac{1}{T}\sum_{t=0}^{T-1}\pi(t)^{\top}G(t)g_l} \leq O(\epsilon) \quad \forall l \in \{1, \ldots, k\}\label{eq:perf4}
\end{align}
so virtual time average expected cost is within $O(\epsilon)$ of the optimal $c_0^*$ value and 
all constraints have time average expectations within $O(\epsilon)$ of their desired values.  This produces an $O(\epsilon$)-optimal solution to \eqref{eq:r1}-\eqref{eq:r6}.  In particular, the virtual decisions $\pi(t)$ are very close to satisfying the global balance constraints needed for the actual system. 

How do costs and transition probabilities in the virtual system relate to those in the actual system?  Observe that the \emph{conditional} transition probabilities and \emph{conditional} costs on the virtual and actual systems are exactly the same for all time $t$, all $i,j\in\script{S}$, and all $l \in \{0, 1, \ldots, k\}$:
\begin{align*}
&P[S(t+1)=j|S(t)=i, H(t)]= \expect{p_{i,j}(W(t),A_i(t))|H(t)}\\
&\expect{C_l(t)|S(t)=i, H(t)} = \expect{c_{i,l}(W(t),A_i(t))|H(t)} 
\end{align*}
Thus, the time average expected costs on the actual system will be the same as the virtual system if both systems have the same expected fraction of time being in each state $i \in \script{S}$. 
Intuitively, the fact that both actual and virtual systems have the same conditional 
transition probabilities between states suggests they both spend similar fractions of time in each state.   

As a (nonrigorous) thought experiment, imagine a fixed matrix $P=(P_{i,j})$ that acts as a ``limiting average transition probability'' matrix that is common to both actual and virtual systems. The actual system \emph{naturally} satisfies a global balance constraint with respect to $P$ because the number of times visiting each state $i \in \script{S}$ is always within 1 of the number of times exiting. Meanwhile, the virtual system is \emph{carefully controlled}
to satisfy the global balance constraints with respect to $P$. If $P$ is irreducible then there is only one solution to the global balance constraints, so the steady state probabilities across the virtual and actual systems must be the same! 
We believe a theorem in this direction can be proven by imposing additional mild structure on the problem. In particular, we conjecture that forcing the system to revisit state 1 with some small probability $\delta>0$ on every slot creates an irreducibility property that induces a unique steady state that attracts both virtual and actual systems.   
Rather than analytically pursue this direction, we restrict the mathematical analysis in this paper to the virtual system. This analysis uses a novel layered approach to achieve the desired time averages under the constraint that $\pi(t)$ and $W(t)$ are independent. Then, we simulate the system on the robot example. Simulations reveal that behavior of the virtual and actual system is similar, with the exception of particular cases when irreducibility dramatically 
fails. Fortunately, the exceptional cases are easy to detect, and a simple redirect mode is introduced in Section \ref{section:simulation} that recovers desirable behavior. 

\subsection{Layered analysis} 

For the remainder of this paper we assume the sets $\Gamma_i$ are closed for all $i \in \script{S}$ (so $\Gamma_i=\overline{\Gamma}_i$) and the problem \eqref{eq:det1}-\eqref{eq:det5} is feasible with optimal solution $(\pi^*;(c_{i,l}^*); (p_{i,j}^*))$ where  $\pi^*\in\script{P}$ and 
\begin{equation} \label{eq:in-gamma-dude}
((c_{i,l}^*); (p_{i,j}^*))_{(l,j)\in\script{I}}\in\Gamma_i \quad \forall i \in \script{S}
\end{equation} 
For each $i \in \script{S}$, by definition of $\Gamma_i$, there is a function $\alpha_i \in \script{C}_i$ such that defining $A_i^*(t)=\alpha_i(W(t),U(t))$ yields
\begin{align}
&\expect{c_{i,l}(W(t), A_i^*(t))} = c_{i,l}^*\label{eq:Good1}\\
&\expect{p_{i,j}(W(t),A_i^*(t))} = p_{i,j}^*\label{eq:Good2}
\end{align}
for all $t \in \{0, 1, 2, \ldots\}$, all $l \in \{0, \ldots, k\}$, $j \in \script{S}$.

\begin{lem} \label{lem:key} For all $t \in \{0, 1, 2, \ldots\}$ we have 
\begin{align}
\expect{\Delta(t) + V\pi(t)^{\top}G_0(t-1) + \alpha D(\pi(t);\pi(t-1))} \leq b + Vc_0^*+ \alpha \expect{D(\pi^*;\pi(t-1))-\alpha D(\pi^*;\pi(t))} \label{eq:result-prev-lem} 
\end{align}
where $b = (3/2)(n+kc_{max}^2)$.
\end{lem}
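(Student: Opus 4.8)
The plan is to combine the deterministic drift‑plus‑penalty bound \eqref{eq:DPP} with the pushback lemma, evaluated at the fixed optimal point $\pi^*$, and then take expectations against the reference policy $A_i^*(t)=\alpha_i(W(t),U(t))$ of \eqref{eq:Good1}--\eqref{eq:Good2}. First I would note that Layer~1 chooses $\pi(t)\in\script{P}_o$ to minimize, over $\pi(t)\in\script{P}$, exactly the right‑hand side of \eqref{eq:DPP} with the constant $b_1$ removed, i.e.\ $f(\pi(t))+\alpha D(\pi(t);\pi(t-1))$ with the linear (hence convex) map $f(p)=p^{\top}[VG_0(t-1)+Y(t-1)Q(t)+G(t-1)Z(t)]$. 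Since $\pi(t-1)\in\script{P}_o$ (true for $t\ge1$ from the explicit exponential‑weights update, and for $t=0$ from the initialization $\pi(-1)=(1/n,\dots,1/n)$) and $\pi(t)\in\script{P}_o$ likewise, Lemma~\ref{lem:pushback} applies with $A=\script{P}$, $q=\pi(t-1)$ and comparison point $p=\pi^*$; substituting the resulting inequality back into \eqref{eq:DPP} yields
\begin{align*}
&\Delta(t)+V\pi(t)^{\top}G_0(t-1)+\alpha D(\pi(t);\pi(t-1)) \\
&\qquad\leq b_1+(\pi^*)^{\top}[VG_0(t-1)+Y(t-1)Q(t)+G(t-1)Z(t)]+\alpha(D(\pi^*;\pi(t-1))-D(\pi^*;\pi(t))).
\end{align*}
The telescoping Bregman terms are already in the desired form, so all that remains is to show $\expect{(\pi^*)^{\top}[VG_0(t-1)+Y(t-1)Q(t)+G(t-1)Z(t)]}\le Vc_0^*+(b-b_1)$.

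For that, I would expand the inner product entry by entry and write it as $(\pi^*)^{\top}Q(t)+\sum_{i\in\script{S}}\pi_i^*\Phi_i(A_i(t-1);Q(t),Z(t))$, where $\Phi_i(a;q,z)=Vc_{i,0}(W(t-1),a)-\sum_jq_jp_{i,j}(W(t-1),a)+\sum_lz_lc_{i,l}(W(t-1),a)$; note that $\Phi_i(\,\cdot\,;Q(t-1),Z(t-1))$ is exactly the Layer~2 objective \eqref{eq:layer2} read at slot $t-1$, so each $A_i(t-1)$ minimizes it over $\script{A}_i$. I would then make two replacements: (i) swap the queue arguments $Q(t),Z(t)$ for $Q(t-1),Z(t-1)$ inside $\Phi_i$, paying a bounded error controlled via the one‑slot increments $\max_j|Q_j(t)-Q_j(t-1)|\le1$ and $\max_l|Z_l(t)-Z_l(t-1)|\le c_{max}$ (from \eqref{eq:q-update}, \eqref{eq:z-update}, $\norm{\pi(t-1)}_1=1$, and the entrywise bounds $1$ and $c_{max}$ on $Y(\,\cdot\,)$ and $G(\,\cdot\,)$); and (ii) use the pointwise minimality of $A_i(t-1)$ to replace it by $A_i^*(t-1)=\alpha_i(W(t-1),U(t-1))$. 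The one nonobvious bookkeeping choice is to keep the stray term $(\pi^*)^{\top}Q(t)$ \emph{outside} the $\Phi_i$; this is what lets me later cancel it against $(\pi^*)^{\top}Q(t-1)$ and keep the additive constant at $O(n+kc_{max}^2)$ rather than accumulating a queue‑size‑dependent term.

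Next I would take expectations. Because the virtual decisions $\pi(s),A(s)$ and the virtual queues are deterministic functions of $W(0),\dots,W(s-1)$ (plus the fixed initialization), $Q(t-1),Z(t-1),\pi(t-1)$ are measurable with respect to a $\sigma$‑algebra $\script{F}_{t-1}$ that is independent of $(W(t-1),U(t-1))$, a pair distributed as $(W(0),U(0))$. Conditioning on $\script{F}_{t-1}$ and using \eqref{eq:Good1}--\eqref{eq:Good2} gives $\expect{\Phi_i(A_i^*(t-1);Q(t-1),Z(t-1))\mid\script{F}_{t-1}}=Vc_{i,0}^*-\sum_jQ_j(t-1)p_{i,j}^*+\sum_lZ_l(t-1)c_{i,l}^*$. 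Summing against $\pi_i^*$: the global balance equations \eqref{eq:det2} collapse $\sum_i\pi_i^*p_{i,j}^*$ to $\pi_j^*$, so the $Q$‑terms become $-(\pi^*)^{\top}Q(t-1)$, which cancels $(\pi^*)^{\top}Q(t)$ up to $\expect{(\pi^*)^{\top}(Q(t)-Q(t-1))}$, itself bounded in magnitude by $\max_j|Q_j(t)-Q_j(t-1)|\le1$; constraints \eqref{eq:det3} with $Z_l(t-1)\ge0$ render the $Z$‑terms nonpositive; and $\sum_i\pi_i^*c_{i,0}^*=c_0^*$. A short computation then shows the remaining bounded pieces (the queue‑swap error and the $Q$‑increment term) total at most $b-b_1=2b_1$, so the stated $b=3b_1=\frac{3}{2}(n+kc_{max}^2)$ works; this handles $t\ge1$. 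The case $t=0$ has no slot $-1$, so instead I would apply \eqref{eq:DPP} and the pushback step directly using the initialization $G_0(-1)=-(c_{max},\dots,c_{max})$, $Y(-1)=0$, $G(-1)=0$, and conclude from $-c_{max}\le c_0^*$ (valid since $c_0^*=\sum_i\pi_i^*c_{i,0}^*\ge-c_{max}$).

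The step I expect to be the real obstacle is reconciling the one‑slot timing lag: $A_i(t-1)$ was chosen in Layer~2 to be optimal for the backlogs $(Q(t-1),Z(t-1))$, while the Lyapunov drift $\Delta(t)$ weighs it against $(Q(t),Z(t))$. Carrying out this swap without blowing up the constant needs both the per‑slot increment bounds and the telescoping‑by‑$(\pi^*)^{\top}Q(t)$ trick above. The accompanying independence claim — that $Q(t-1),Z(t-1)$ are independent of the fresh randomness $(W(t-1),U(t-1))$ feeding the reference action — is routine but must be stated with care, since the virtual algorithm's state is correlated with all earlier realizations of $W$.
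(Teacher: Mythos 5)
Your proposal is correct and follows essentially the same route as the paper's proof: the drift bound \eqref{eq:DPP}, the pushback lemma evaluated at $\pi^*$, Layer-2 minimality against the reference actions $A_i^*$ of \eqref{eq:Good1}--\eqref{eq:Good2}, independence of the queues from the fresh $(W,U)$, the deterministic optimality relations, and one-slot queue-increment bounds to absorb the $t$ versus $t-1$ index mismatch into the constant $b=b_1+b_2$. The only difference is cosmetic bookkeeping — you keep $(\pi^*)^{\top}Q(t)$ outside and cancel it against $(\pi^*)^{\top}Q(t-1)$, while the paper adds $\sum_j Q_j(t)\pi_j^*$ to both sides, proves the inner-layer bound \eqref{eq:inner-layer} for all $t\ge -1$, and swaps the whole queue vectors at cost $b_2=n+kc_{max}^2$ — and your error budget fits within the same stated $b=\tfrac{3}{2}(n+kc_{max}^2)$.
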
 

\begin{proof} 
Fix $t \in \{0, 1, 2, \ldots\}$. For each $i \in \script{S}$, the layer 2 decision chooses $A_i(t) \in \script{A}_i$ to minimize \eqref{eq:layer2} and so 
\begin{align*}
&Vc_{i,0}(W(t),A_i(t)) + \sum_{l=1}^kZ_l(t)c_{i,l}(W(t),A_i(t)) - \sum_{j=1}^nQ_j(t)p_{i,j}(W(t),A_i(t))\\
&\leq  Vc_{i,0}(W(t),A_i^*(t)) + \sum_{l=1}^kZ_l(t)c_{i,l}(W(t),A_i^*(t))-  \sum_{j=1}^nQ_j(t)p_{i,j}(W(t),A_i^*(t))
\end{align*}
where $A_i^*(t)$ is any other (possibly randomized) decision in $\script{A}_i$. 
Multiplying the above by $\pi_i^*$, summing over $i \in \{1, \ldots, n\}$, and adding $\sum_{j=1}^nQ_j(t)\pi_j^*$ to both sides gives 
\begin{align}
&\pi^{*\top}\left[VG_0(t) + G(t)Z(t) + Y(t)Q(t)\right]\nonumber\\
&\leq V\sum_{i=1}^n\pi_i^*c_{i,0}(W(t),\alpha_i^*(t)) + \sum_{l=1}^kZ_l(t)\sum_{i=1}^n\pi_i^*c_{i,l}(W(t),A_i^*(t)) + \sum_{j=1}^nQ_j(t)\left[\pi_j^*-\sum_{i=1}^n\pi_i^*p_{i,j}(W(t),A_i^*(t))\right] \label{eq:pre-expect} 
\end{align}
For $i\in\script{S}$, let $A_i^*(t) = \alpha_i(W(t),V)$ be the action that is independent of history $H(t)$ (and hence independent of the virtual queue values) that yields \eqref{eq:Good1}-\eqref{eq:Good2}.
Taking expectations of \eqref{eq:pre-expect} gives
\begin{align} 
&\pi^{*\top}\expect{VG_0(t) + G(t)Z(t) + Y(t)Q(t)}\nonumber \\
&\leq V\sum_{i=1}^n\pi_i^*c_{i,0}^*+ \sum_{l=1}^kZ_l(t)\sum_{i=1}^n\pi_i^*c_{i,l}^*+ \sum_{j=1}^nQ_j(t)\left[\pi_j^*-\sum_{i=1}^n\pi_i^*p_{i,j}^*\right] 
\end{align}
By definition of an optimal solution to \eqref{eq:det1}-\eqref{eq:det5} we have $c_{i,l}^*\leq 0$, 
$\pi_j^*=\sum_{i=1}^n\pi_i^*p_{i,j}^*$, and 
$c_0^*=\sum_{i=1}^n \pi_i^*c_{i,0}^*$ 
and so 
\begin{equation} \label{eq:inner-layer} 
\pi^{*\top}\expect{VG_0(t) + G(t)Z(t) + Y(t)Q(t)} \leq Vc_0^*
\end{equation} 
By definition of $G_0(-1)=-(c_{max}, \ldots, c_{max})$, inequality \eqref{eq:inner-layer} also holds for time $t=-1$. 

The layer 1 decision chooses $\pi(t)$ in the convex set $\script{P}$ to minimize $\alpha D(\pi(t);\pi(t-1))$ plus a linear function of $\pi(t)$, so by the pushback lemma (Lemma \ref{lem:pushback}): 
\begin{align}
&\pi(t)^{\top}\left[VG_0(t-1) + Y(t-1)Q(t) +  G(t-1)Z(t) \right] + \alpha D(\pi(t);\pi(t-1))\nonumber\\
&\leq \pi^{*\top}\left[ VG_0(t-1) + G(t-1)Z(t) + Y(t-1)Q(t)\right] + \alpha D(\pi^*;\pi(t-1))-\alpha D(\pi^*;\pi(t))\label{eq:sub44} 
\end{align}
Define $b_2 = kc_{max}^2 + n$. 
Since the $|Z_l(t)-Z_l(t-1)|\leq c_{max}$ and $|Q_j(t)-Q_j(t-1)|\leq 1$ we obtain from \eqref{eq:sub44}
\begin{align*}
&\expect{\pi(t)^{\top}\left[VG_0(t-1) + Y(t-1)Q(t) +  G(t-1)Z(t) \right] + \alpha D(\pi(t);\pi(t-1))}\\
&\leq b_2 + \pi^{*\top}\expect{VG_0(t-1)+G(t-1)Z(t-1) + Y(t-1)Q(t-1)} + \alpha \expect{D(\pi^*;\pi(t-1))-\alpha D(\pi^*;\pi(t))}\\
&\overset{(a)}{\leq} b_2 + Vc_0^* + \alpha \expect{D(\pi^*;\pi(t-1))-\alpha D(\pi^*;\pi(t))}
\end{align*}
where (a) uses the fact that \eqref{eq:inner-layer} holds for all $t \in \{-1, 0, 1, 2, \ldots\}$. 
Adding $b_1$ to both sides, defining $b=b_1+b_2$, and substituting into inequality \eqref{eq:DPP} proves the result. 
\end{proof} 

\

\begin{thm}\label{thm:performance} (Performance) Assuming the problem is feasible, we have for all positive integers $T$:
\begin{align} 
\frac{1}{T}\sum_{t=0}^{T-1}\expect{\pi(t)G_0(t)} - c_0^* \leq  \frac{b(1+\frac{1}{T})}{V} + \frac{Vc_{max}^2(1+\frac{1}{T})}{2\alpha}+ \frac{c_0^*+c_{max}}{T}  + \frac{\alpha\log(n)}{VT} \label{eq:costs}
\end{align}
where $b=(3/2)(n+kc_{max}^2)$. 
If the Lagrange multiplier assumption (Assumption \ref{assumption:LM}) 
holds then for all positive integers $T$: 
\begin{align}
&\expect{\norm{J(T)}} \leq V\norm{(\lambda;\mu)} +\sqrt{V^2\norm{(\lambda;\mu)}^2 + 2bT + 2\alpha \log(n) + \frac{V^2d^2T}{\alpha}}\label{eq:queuebound} \\
&\expect{\left|\frac{1}{T}\sum_{t=0}^{T-1}\pi(t)^{\top}Y(t)y_j\right|} \leq \frac{\norm{J(T+1)}}{T} + \sqrt{(\frac{4b}{\alpha}+ \frac{4V^2d^2}{\alpha^2})(1+\frac{1}{T}) + \frac{4\log(n)}{T} + \frac{2V^2\norm{(\lambda;\mu)}^2}{\alpha T}} \label{eq:Dav} \\
&\expect{\frac{1}{T}\sum_{t=0}^{T-1}\pi(t)^{\top}G(t)g_l}\leq \frac{\norm{J(T+1)}}{T} + c_{max}\sqrt{(\frac{4b}{\alpha}+ \frac{4V^2d^2}{\alpha^2})(1+\frac{1}{T}) + \frac{4\log(n)}{T} + \frac{2V^2\norm{(\lambda;\mu)}^2}{\alpha T}}\label{eq:Dav2}
\end{align}
where $J(t)=(Q(t);Z(t))$, 
$\norm{(\lambda;\mu)}^2 = \sum_{j=1}^n\lambda_j^2+\sum_{l=1}^k\mu_l^2$, and 
$$d=c_{max}(1+\norm{\mu}_1)+\norm{\lambda}_1$$
In particular, if we fix $\epsilon>0$ and define $\alpha=1/\epsilon^2$, 
$V=\rho/\epsilon$ for some positive constant $\rho$ (such as $\rho=1$), then 
for all $T\geq 1/\epsilon^2$ the right-hand-sides of \eqref{eq:costs}, \eqref{eq:Dav}, \eqref{eq:Dav2} are $O(\epsilon)$, so that  \eqref{eq:perf1}-\eqref{eq:perf4} hold. 
\end{thm}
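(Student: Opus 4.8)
The plan is to obtain all four displayed bounds from a single ``master inequality'' gotten by summing Lemma~\ref{lem:key} over $t\in\{0,\ldots,T-1\}$. The drift terms telescope, $\sum_t\Delta(t)=L(T)-L(0)=\tfrac12\norm{J(T)}^2$ because $Q(0)=Z(0)=0$; the mirror terms telescope, $\sum_t\big(D(\pi^*;\pi(t-1))-D(\pi^*;\pi(t))\big)=D(\pi^*;\pi(-1))-D(\pi^*;\pi(T-1))\le\log n$ by \eqref{eq:logn} since $\pi(-1)$ is uniform and the last term is nonnegative. Hence
\[
\tfrac12\expect{\norm{J(T)}^2}+V\sum_{t=0}^{T-1}\expect{\pi(t)^\top G_0(t-1)}+\alpha\sum_{t=0}^{T-1}\expect{D(\pi(t);\pi(t-1))}\;\le\;bT+VTc_0^*+\alpha\log n .
\]
The three remaining ingredients are: (i) a one-slot index shift to pass between the lagged data $G_0(t-1),Y(t-1),G(t-1)$ that the algorithm actually uses and the current quantities appearing in the statements; (ii) the Pinsker bound \eqref{eq:pinsker}, used in the form $-\tfrac{\beta}{2}\norm{p-q}_1^2+\gamma\norm{p-q}_1\le\tfrac{\gamma^2}{2\beta}$ to absorb each ``lag correction'' $\norm{\pi(t)-\pi(t-1)}_1$ against the regularizer $\alpha D(\pi(t);\pi(t-1))$; and (iii) the Lagrange-multiplier consequence \eqref{eq:LM-consequence}, applied at each slot with the algorithm's own decisions.

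For the cost bound \eqref{eq:costs} I write $\sum_t\pi(t)^\top G_0(t)=\sum_t\pi(t)^\top G_0(t-1)+\pi(T-1)^\top G_0(T-1)-\pi(0)^\top G_0(-1)+\sum_{s=0}^{T-2}(\pi(s)-\pi(s+1))^\top G_0(s)$ by shifting the index; the boundary terms are $O(c_{max})$ (using $\norm{G_0}_\infty\le c_{max}$ and $\pi(0)^\top G_0(-1)=-c_{max}$) and each difference is $\le c_{max}\norm{\pi(s+1)-\pi(s)}_1$. Dropping $\tfrac12\expect{\norm{J(T)}^2}\ge0$ in the master inequality and substituting, the leftover $-\tfrac{\alpha}{V}\sum D(\pi(t);\pi(t-1))+c_{max}\sum\norm{\pi(s+1)-\pi(s)}_1$ is handled term by term via (ii) with $\beta=\alpha/V$, $\gamma=c_{max}$, contributing $O(TVc_{max}^2/\alpha)$; dividing by $T$ gives \eqref{eq:costs}.

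For the queue bound \eqref{eq:queuebound} the crude estimate $\pi(t)^\top G_0(t-1)\ge-c_{max}$ is too weak (it only yields $\expect{\norm{J(T)}}=O(\sqrt{VT})$), so instead I lower-bound $\sum_t\expect{\pi(t)^\top G_0(t-1)}$ using the Lagrange multipliers. After the index shift this becomes $\sum_s\expect{\pi(s)^\top G_0(s)}$ plus $\norm{\pi(s+1)-\pi(s)}_1$ corrections, and \eqref{eq:LM-consequence} applied at slot $s$ with $(W,\pi,A)=(W(s),\pi(s),A(s))$ replaces $\expect{\pi(s)^\top G_0(s)}$ by $c_0^*-\sum_j\lambda_j\expect{\pi(s)^\top Y(s)y_j}-\sum_l\mu_l\expect{\pi(s)^\top G(s)g_l}$; this application is legitimate because $\pi(s)$ is computed in Layer~1 from data up to slot $s-1$ and is therefore independent of $W(s)$, while $A(s)$ is a measurable function of $W(s)$ and of the (queue-valued, $W(s)$-independent) history. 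Telescoping the queue recursions \eqref{eq:q-update}, \eqref{eq:z-update} turns $\sum_s\pi(s)^\top Y(s)y_j$ into $Q_j(T)$ and $\sum_s\pi(s)^\top G(s)g_l$ into $\le Z_l(T)$ (again up to $\norm{\pi(s+1)-\pi(s)}_1$ corrections, with $\mu_l\ge0$ used for the second inequality); the total $\norm{\cdot}_1$-correction then carries a coefficient $Vd$ with $d=c_{max}(1+\norm{\mu}_1)+\norm{\lambda}_1$, and combining it with $\alpha\sum D$ via (ii) produces a $\tfrac{TV^2d^2}{2\alpha}$ term, while $V\expect{\langle(\lambda;\mu),J(T)\rangle}\le V\norm{(\lambda;\mu)}\sqrt{\expect{\norm{J(T)}^2}}$ by Cauchy--Schwarz and Jensen. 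The master inequality becomes $\tfrac12\expect{\norm{J(T)}^2}\le V\norm{(\lambda;\mu)}\sqrt{\expect{\norm{J(T)}^2}}+\big(bT+\alpha\log n+\tfrac{TV^2d^2}{2\alpha}+O(V)\big)$, a quadratic in $\sqrt{\expect{\norm{J(T)}^2}}$ whose solution is \eqref{eq:queuebound}.

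Finally, \eqref{eq:Dav} and \eqref{eq:Dav2} follow from the virtual-queue lemma \eqref{eq:vqq}, \eqref{eq:vqz} after taking expectations: the $Q_j(T+1)/T$, $Z_l(T+1)/T$ terms are $\expect{\norm{J(T+1)}}/T$, and $\tfrac1T\sum\expect{\norm{\pi(t)-\pi(t-1)}_1}\le\sqrt{\tfrac2T\sum\expect{D(\pi(t);\pi(t-1))}}$ by \eqref{eq:pinsker} and Jensen; so it remains to bound $\sum_t\expect{D(\pi(t);\pi(t-1))}$, for which I re-run the queue computation but retain half of the regularizer (splitting $\alpha D-Vd\norm{\cdot}_1\ge\tfrac\alpha2 D-\tfrac{V^2d^2}{\alpha}$) and absorb the cross term by $V\norm{(\lambda;\mu)}\sqrt{\expect{\norm{J(T)}^2}}\le\tfrac14\expect{\norm{J(T)}^2}+V^2\norm{(\lambda;\mu)}^2$, leaving $\sum\expect{D}=O\!\big(bT/\alpha+TV^2d^2/\alpha^2+\log n+V^2\norm{(\lambda;\mu)}^2/\alpha\big)$, which gives \eqref{eq:Dav}, \eqref{eq:Dav2} up to the $(1+1/T)$ factors coming from the exact indexing. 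The $O(\epsilon)$ claims in \eqref{eq:perf1}--\eqref{eq:perf4} are then immediate on setting $\alpha=1/\epsilon^2$, $V=\rho/\epsilon$, $T\ge1/\epsilon^2$ and checking each term. The main obstacle is exactly the pervasive one-slot lag: every invocation of Layer~1/Layer~2 optimality and of \eqref{eq:LM-consequence} is ``off by one slot,'' and the argument closes only because the $\alpha D(\pi(t);\pi(t-1))$ penalty lets each $\norm{\pi(t)-\pi(t-1)}_1$ correction be absorbed at cost $O(V^2/\alpha)$ per slot; it is this per-slot cost that dictates the $\alpha=1/\epsilon^2$, $V=1/\epsilon$ scaling, and the Lagrange-multiplier refinement in paragraph three is precisely what keeps the queue (and constraint-violation) rate at $O(\epsilon)$ rather than $O(\sqrt\epsilon)$.
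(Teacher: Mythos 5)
Your proposal is correct and follows essentially the same route as the paper's proof: Lemma~\ref{lem:key} plus telescoping, Pinsker-based absorption of the one-slot-lag corrections $\norm{\pi(t)-\pi(t-1)}_1$ against the $\alpha D(\pi(t);\pi(t-1))$ regularizer, the Lagrange-multiplier consequence \eqref{eq:LM-consequence} to convert the cost term into queue increments (giving the quadratic in $\expect{\norm{J(T)}}$), and finally \eqref{eq:vqq}--\eqref{eq:vqz} with a Jensen bound on $\frac{1}{T}\sum\expect{\norm{\pi(t)-\pi(t-1)}_1}$. The only differences are bookkeeping ones --- you sum first and shift indices while the paper replaces $\pi(t)$ by $\pi(t-1)$ slot by slot, and you bound $\sum\expect{D(\pi(t);\pi(t-1))}$ where the paper bounds $\sum\expect{\norm{\pi(t)-\pi(t-1)}_1^2}$ directly --- which affect only the absolute constants, not the stated $O(\epsilon)$ conclusions.
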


\begin{proof}  Fix $t \in \{0, 1, 2, \ldots\}$. Rearranging \eqref{eq:result-prev-lem} gives 
\begin{align}
\expect{\Delta(t)}  + V\expect{\pi(t-1)^{\top}G_0(t-1)}&\leq b + Vc_0^* + \alpha\expect{D(\pi^*;\pi(t-1))-D(\pi^*;\pi(t))} \nonumber\\
&- \alpha \expect{D(\pi(t);\pi(t-1))}\nonumber\\
& + V\expect{(\pi(t-1)-\pi(t))^{\top}G_0(t-1)} \label{eq:lemma6sub0} 
\end{align}
To bound terms on the right-hand-side, we have 
\begin{align}
&-\alpha D(\pi(t);\pi(t-1)) \leq -\frac{\alpha}{2}\norm{\pi(t)-\pi(t-1)}_1^2 \label{eq:follows-pinsker}\\
&V(\pi(t-1)-\pi(t))^{\top}G_0(t-1) \leq Vc_{max}\norm{\pi(t)-\pi(t-1)}_1\label{eq:follows-other} 
\end{align}
where \eqref{eq:follows-pinsker} holds by the Pinsker inequality \eqref{eq:pinsker}; \eqref{eq:follows-other} holds because all components of vector $G_0(t-1)$ have magnitude at most $c_{max}$. Substituting these into the right-hand-side of \eqref{eq:lemma6sub0} gives
\begin{align}
\expect{\Delta(t)}  + V\expect{\pi(t-1)^{\top}G_0(t-1)}&\leq b + Vc_0^* + \alpha\expect{D(\pi^*;\pi(t-1))-D(\pi^*;\pi(t))} \nonumber\\
&- \frac{\alpha}{2} \expect{\norm{\pi(t)-\pi(t-1)}_1^2}\nonumber\\
& + Vc_{max}\expect{\norm{\pi(t)-\pi(t-1)}_1}  \label{eq:lemma6sub} 
\end{align}
To bound the last two terms we have 
\begin{align*}
-\frac{\alpha}{2}\norm{\pi(t)-\pi(t-1)}_1^2+ Vc_{max} \norm{\pi(t)-\pi(t-1)}_1&\leq \sup_{x\in\mathbb{R}}\left( -\frac{\alpha}{2}x^2 +Vc_{max}x \right)\\
&= \frac{V^2c_{max}^2}{2\alpha}
\end{align*}
Substituting this into the right-hand-side of \eqref{eq:lemma6sub} yields
\begin{align}
\expect{\Delta(t)}  + V\expect{\pi(t-1)^{\top}G_0(t-1)}&\leq b + Vc_0^* + \alpha\expect{D(\pi^*;\pi(t-1))-D(\pi^*;\pi(t))} + \frac{V^2c_{max}^2}{2\alpha} \label{eq:proof-reuse} 
\end{align}
This holds for all $t \in \{0, 1, 2, \ldots\}$. Fix $T$ as a positive integer. Summing over $t \in \{0, 1, 2, \ldots, T\}$ gives
$$\expect{L(T)-L(0)} + V\sum_{t=-1}^{T-1}\expect{\pi(t)G_0(t)} \leq (b+Vc_0^*)(T+1)  + \alpha \expect{D(\pi^*;\pi(-1))-D(\pi^*;\pi(T))} + \frac{V^2c_{max}^2(T+1)}{2\alpha}$$
Since $L(T)-L(0)\geq 0$, $D(\pi^*;\pi(-1))-D(\pi^*;\pi(T))\leq \log(n)$ (recall \eqref{eq:logn}), and  $\pi(-1)G_0(-1)=-c_{max}$  we have 
$$V\sum_{t=0}^{T-1} \expect{\pi(t)G_0(t)} \leq (b+Vc_0^*)(T+1) + Vc_{max} + \alpha \log(n) + \frac{V^2c_{max}^2(T+1)}{2\alpha}$$
Dividing by $VT$ proves \eqref{eq:costs}.  The remaining inequalities \eqref{eq:queuebound}, \eqref{eq:Dav}, \eqref{eq:Dav2} are proven similarly (see Appendix A).
\end{proof}

\section{Robot simulation} \label{section:simulation} 

\begin{figure}[htbp]
\centering
\begin{tabular}{c | c | c}
 $\alpha$& $\overline{R}_{virtual}$& $\overline{R}_{actual}$\\ \hline \hline
 5 & 0.0039 & 0.1678\\ \hline
 25 & 0.0153 & 0.0066\\ \hline
 50 & 0.6491 & 0.6422 \\ \hline
 100 & 0.6581 & 0.6530 \\ \hline
 1000 &0.6672 &0.6604 
 \end{tabular}
 \caption{Comparing time average cost of the proposed online algorithm over $T=10^6$ slots for the virtual and actual system. The parameter $V=5$ is fixed as $\alpha$ is changed. From the table it seems we need $\alpha\geq |\script{S}|=40$ for cost to be near optimal. For comparison, recall the numerical optimum over the heuristic class of renewal-based policies (fine tuned with knowledge of the problem structure and 
 reward distribution) is $0.66791$.}
 \label{fig:chart} 
\end{figure}

This section simulates the proposed online algorithm for the robot example described in Section \ref{section:robot-example}. For a simulation time $T$ we define the virtual and actual time average reward by 
\begin{align*}
\overline{R}_{virtual} &= \frac{1}{T}\sum_{t=0}^{T-1} \sum_{i\in\script{S}} \pi_i(t)c_{i,0}(W(t), A_i(t))\\
\overline{R}_{actual} &= \frac{1}{T}\sum_{t=0}^{T-1}c_{S(t),0}(W(t),A_{S(t)}(t)) 
\end{align*}
We use the same distribution of rewards as described in Section \ref{section:robot-example-dist}. Using $V=5$, $T=10^6$, the time average rewards for various $\alpha$ values are given in Fig. \ref{fig:chart}. It can be seen that a significant jump in average reward occurs when $\alpha\geq |\script{S}|=40$.  The virtual and actual costs are very close. Also, when $\alpha$ is large, the average cost of both virtual and actual systems seems to be converging close to the value $0.66791$, which is the optimal achieved over the renewal-based Heuristic 2 algorithm of Section \ref{section:robot-example-dist} that is fine tuned with knowledge of the distribution of $W(t)=(W_1(t), \ldots, W_{20}(t))$. We conjecture that Heuristic 2 is in fact optimal for this particular reward distribution.  Our proposed online algorithm yields time averages close to this value without knowing the distribution. 

The time average fractions of time being in each state are shown in Fig. \ref{fig:grid3} for the case $V=5$, $\alpha=1000$. Probabilities are rounded to three places past the decimal point (so probabilities less than $0.0005$ are rounded to $0.000$).    Only results for the virtual system are shown in Fig. \ref{fig:grid3} (the fractions of time for the actual system are similar). The robot learns to avoid location $20$; to avoid being in the home location 1 when it does not have the object (so it almost never stays in the home location more than one slot in a row); to almost never be in the high-value location 9 when it is holding an object (so it immediately transitions out of state 9 once it collects an object there).  It also learns to take 10-hop paths between locations 1 and 9, but it does not take the same path every time. 

\begin{figure}[htbp]
   \centering
   \includegraphics[width=6in]{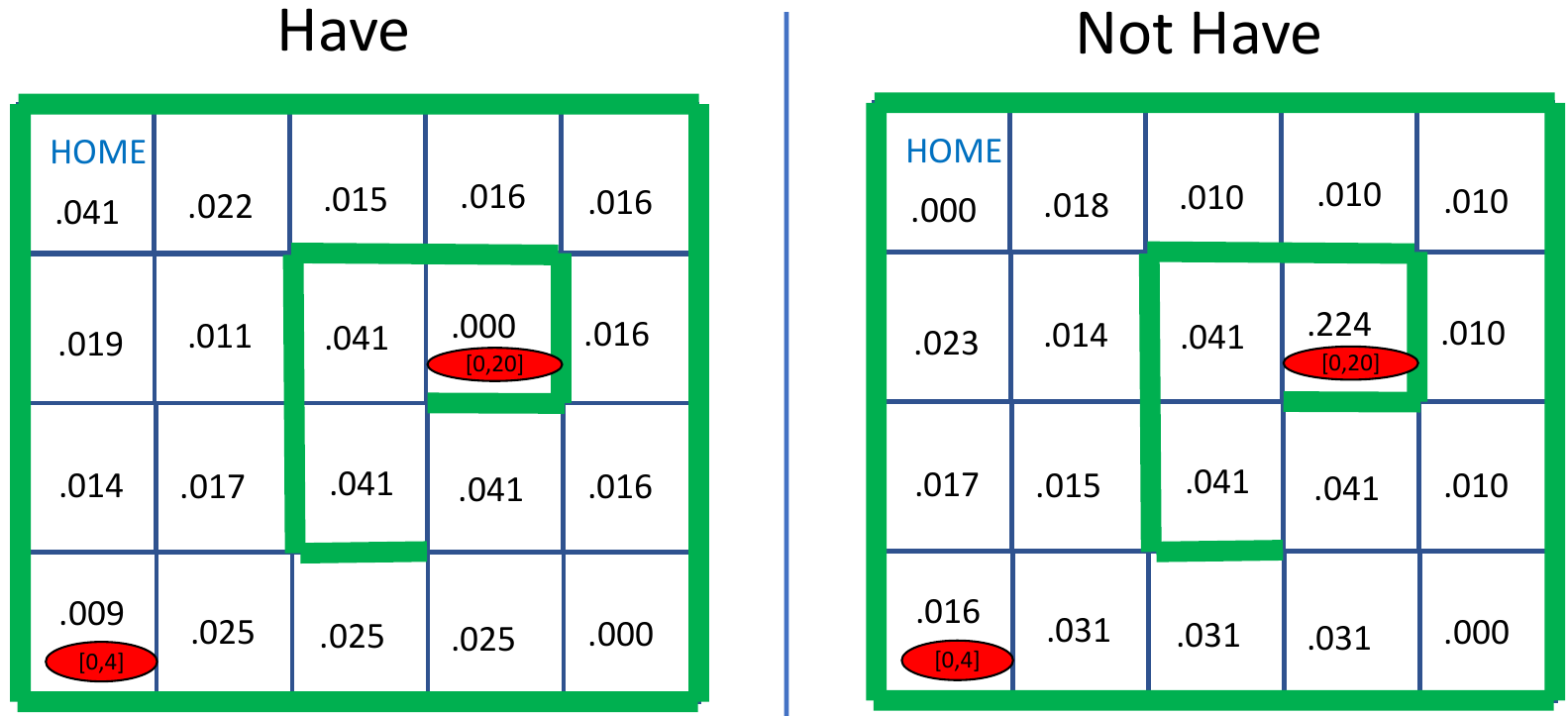} 
   \caption{Fractions of time the virtual system is in each location $a\in\{1, \ldots, 20\}$ when the robot has an object ($Hold(t)=1$) and does not have ($Hold(t)=0$). Parameter settings are $\alpha=1000$, $V=5$, $T=10^6$.}
      \label{fig:grid3}
\end{figure}

\subsection{Dependence on $V$}

Theorem \ref{thm:performance} was stated in terms of general $V>0, \alpha>0$ parameters and is easiest to interpret when $\epsilon>0$ and $\alpha = 1/\epsilon^2$, $V = \rho/\epsilon$ for some constant $\rho>0$. A simple choice is $\rho=1$. However, using a smaller $\rho$ preserves the same theoretical scaling but can provide better performance. This is because, from \eqref{eq:Dav}, deviation in the time averages that relate to the global balance inequalities have some terms that simply go to zero as $T\rightarrow\infty$, while there is a persistent term that is proportional to $V/\alpha$ regardless of the size of $T$. In our simulations it is good to use $\rho \in [1/10, 1/5]$.   Fig. \ref{fig:versusV}(a) fixes $\alpha=1000$ and 
plots time average reward in the virtual and actual systems for $V \in [0, 20]$. The horizontal asymptote is the reward $0.66791$ achieved by the best heuristic in Section \ref{section:robot-example-dist} and fine-tuning the parameters with knowledge of the distribution. It can be seen that there is a close agreement between the virtual and actual system for $V \in [0, 5]$, but the reward of the actual system diminishes as the ratio $V/\alpha$ is increased, as predicted by Theorem \ref{thm:performance}.  Alternatively, the deviation seen in Fig. \ref{fig:versusV}(a) can be fixed by increasing both $\alpha$ and $T$ by a factor of 4, as shown 
in Fig. \ref{fig:versusV}(b).

\begin{figure}%
    \centering
    \subfloat[$\alpha=1000, \: T=10^6$]{{\includegraphics[width=3.3in]{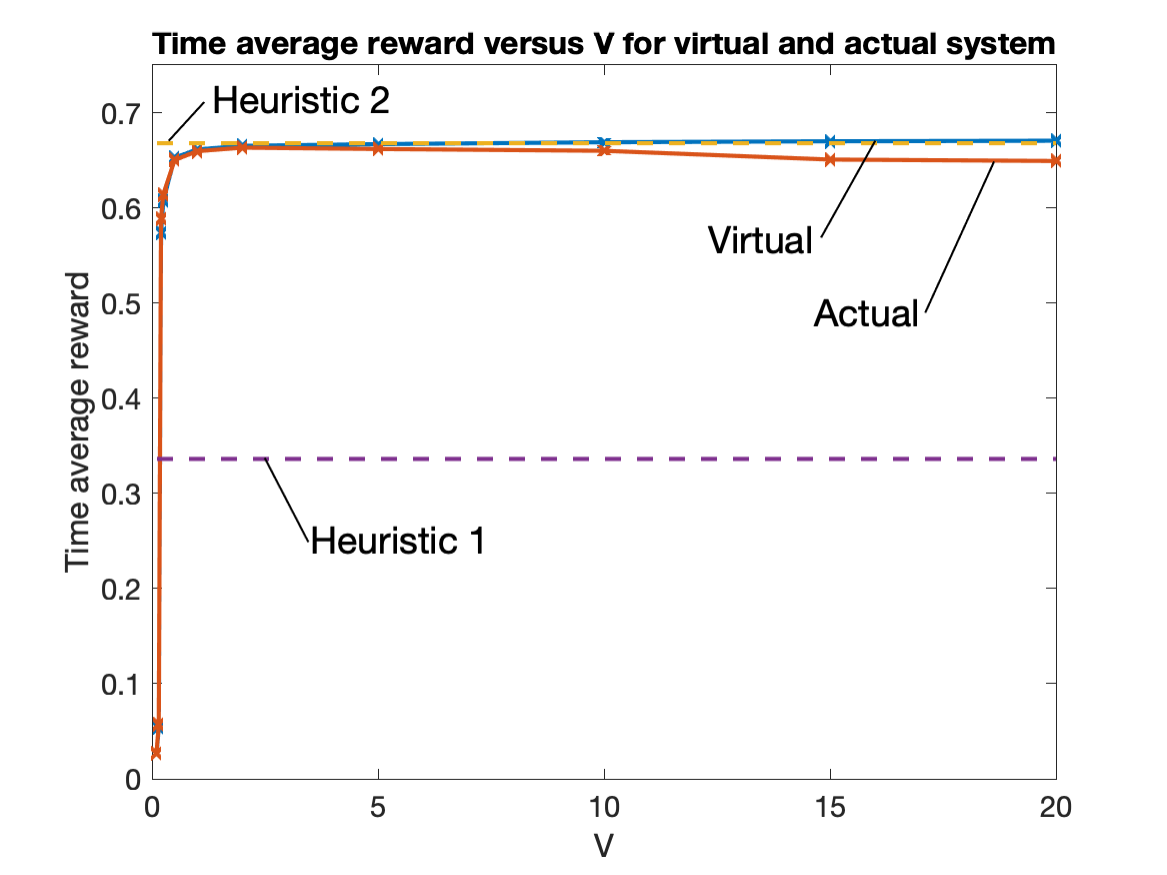} }}%
    \qquad
    \subfloat[$\alpha=4000,\: T=4\times10^6$]{{\includegraphics[width=3.3in]{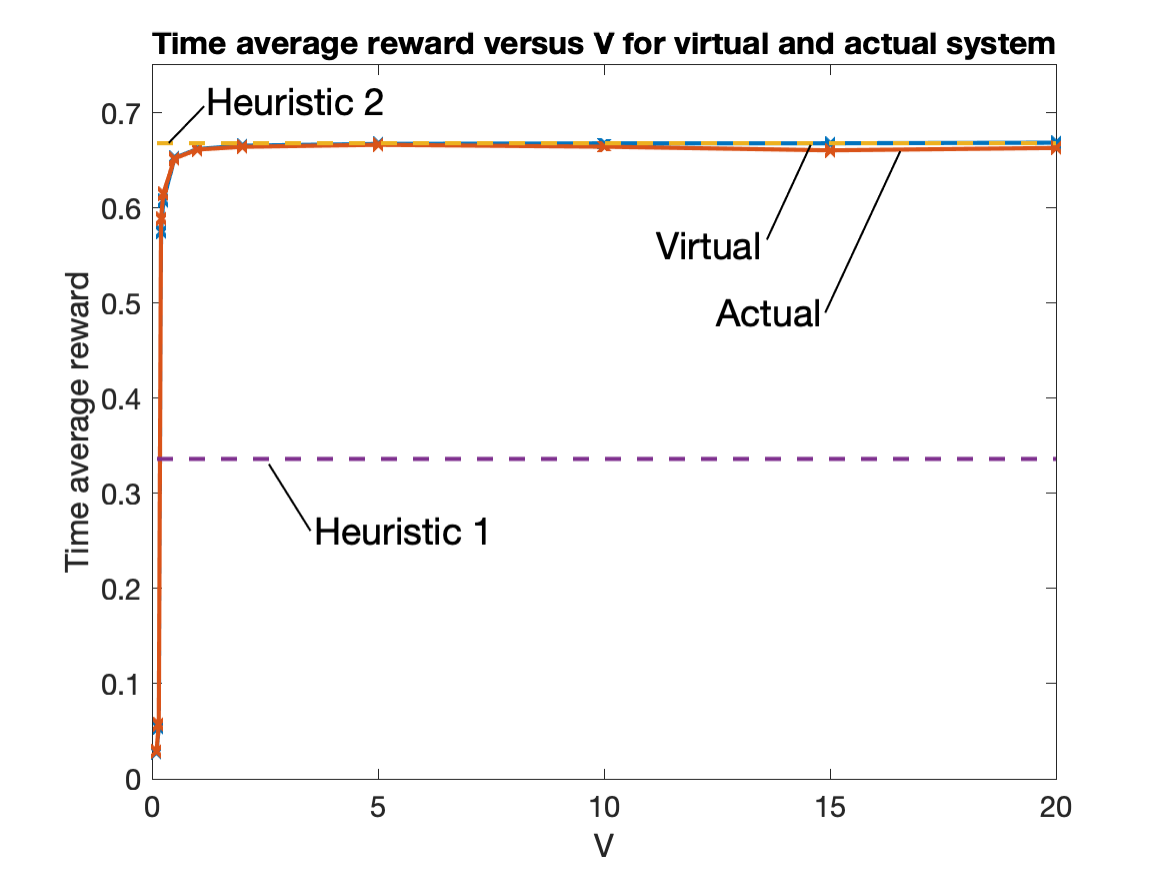} }}%
    \caption{(a) Time average cost versus $V$ for virtual and actual systems ($\alpha=1000$, $T=10^6$). The virtual and actual systems have similar performance when the ratio $V/\alpha$ is small, as predicted by Theorem \ref{thm:performance}; (b) Removing deviation between virtual and actual systems by increasing $\alpha$ and $T$ by a factor of 4 ($\alpha=4000$, $T=4 \times 10^6$).}%
    \label{fig:versusV}%
\end{figure}

\subsection{Different distributions}

\begin{figure}%
    \centering
    \subfloat[Without Redirect mode]{{\includegraphics[width=3.3in]{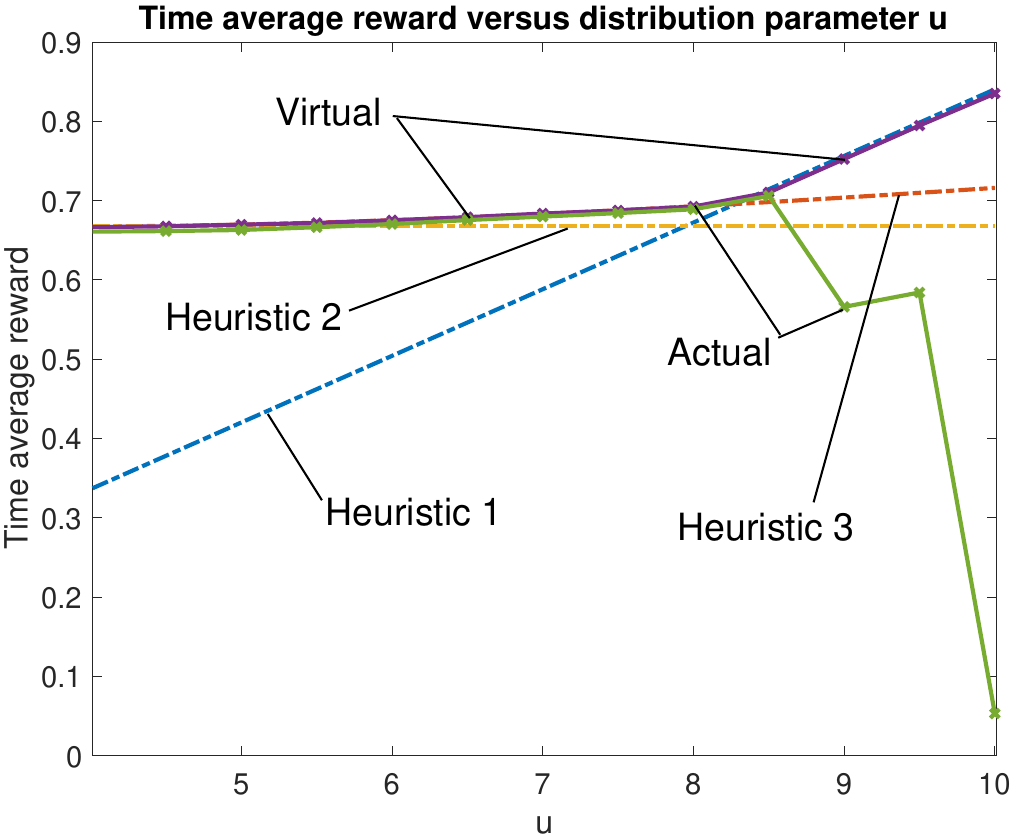} }}%
    \qquad
    \subfloat[With Redirect mode]{{\includegraphics[width=3.3in]{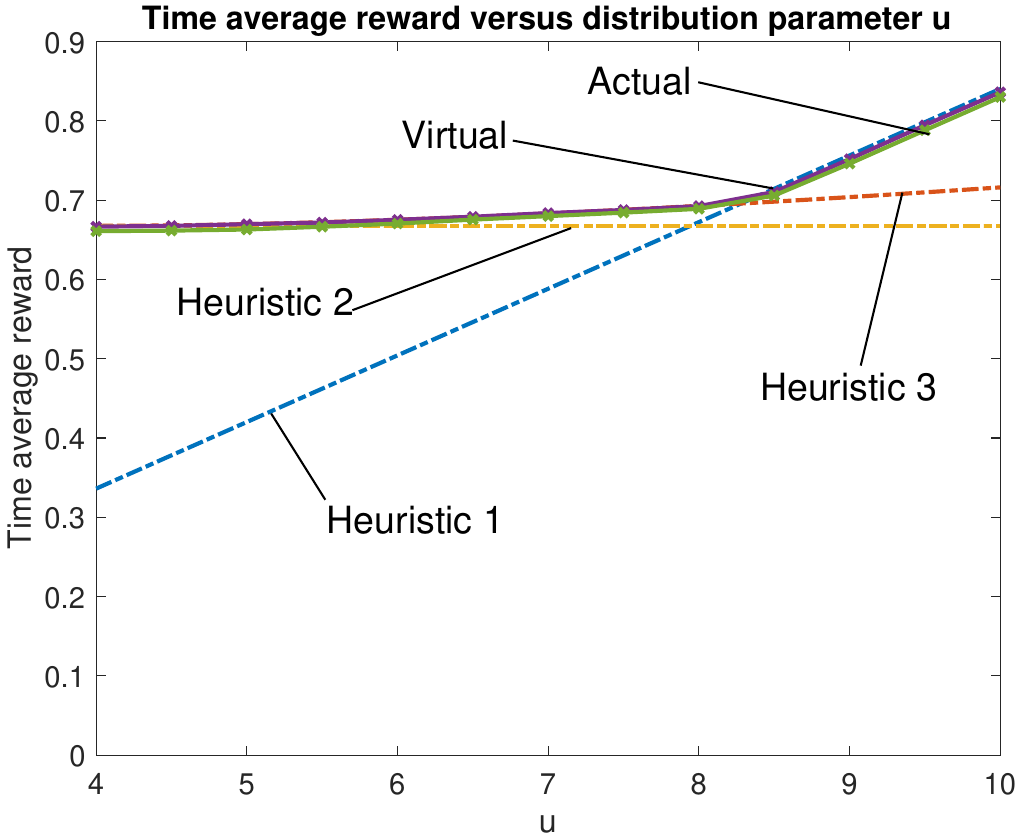} }}%
    \caption{(a) Time average reward versus distribution parameter $u\in[4, 10]$ for $V=5, \alpha=1000, T=10^6$. The three dashed near-linear curves plot $\overline{r}(u)$ for Heuristics 1, 2, 3. The purple curve is the proposed online virtual algorithm and achieves the max of the three heuristics for each $u \in [4, 10]$. The green curve is the actual online algorithm and matches the virtual performance for $u \in [4, 8.5]$ but falls into trapping states for $u\in\{9, 9.5, 10\}$; (b) The same setup, with the exception that the actual system uses the Redirect mode when it detects a trapping state. Performance for both virtual and actual systems closely match and lie on the max of the three heuristic $\overline{r}(u)$ curves.}%
    \label{fig:redirect}%
\end{figure}

Now change the rewards in location $16$ so $R_{16}(t) \sim \mbox{Unif}[0, u]$ for some parameter $u \in [4, 10]$ (the previous subsection uses special case $u=4$). We compare the proposed online algorithm with the following three distribution-aware heuristics. 

\begin{itemize} 
\item Heuristic 1 (with parameter $u$): This is the same as described in Section \ref{section:robot-example-dist}
with the exception that $\theta \in [0, u)$.  Then 
$$ \overline{r}(u) = \sup_{\theta \in [0, u)} \left\{ \frac{\frac{1}{2}(\theta + u)}{5 + \frac{2u}{u-\theta}}\right\}$$
where $\overline{r}(u)$ is the best time average reward for this class of strategies, considering all parameters $\theta \in [0, u)$. 
\item Heuristic 2: Same as before, with $\overline{r}(u)=0.66791$ for all $u$.
\item Heuristic 3: Starting from location 1, move  to location 16 in 3 steps via the path $1\rightarrow6\rightarrow 11\rightarrow 16$ (ignoring any rewards that appear in locations $6$ and $11$). Stay in location 16 for only one slot: If during that one slot we see $W_{16}(t)>\theta_1$ (for some parameter $\theta_1 \in [0, u]$) then collect this object and go back to location 1 in three more steps (ending the frame in 6 slots). Else, take the 7 hop path from 16 to 9 (ignoring rewards along the way). Stay in location 9 until we see a reward $W_9(t)>\theta_2$ (for some $\theta_2 \in [0, 20)$). Collect this reward and return home using any 10-hop path. Repeat. On each frame, the probability of collecting an object from location  $16$ (and hence having a 6-slot frame) is $\frac{1}{2}\left(\frac{u-\theta_1}{u}\right)$ and so by renewal-reward theory: 
$$\overline{r}(u)=\sup_{\theta_1\in[0,u], \theta_2 \in [0, 20)}\left\{\frac{\frac{1}{2}\left(\frac{u-\theta_1}{u}\right)\left(\frac{\theta_1+u}{2}\right) + \left(1-\frac{1}{2}\left(\frac{u-\theta_1}{u}\right)\right)\frac{\theta_2+20}{2}}{\frac{1}{2}\left(\frac{u-\theta_1}{u}\right)6 + \left(1 - \frac{1}{2}\left(\frac{u-\theta_1}{u}\right)\right)(19 + \frac{40}{20-\theta_2})  } \right\}$$
\end{itemize}

Fig. \ref{fig:redirect}(a) plots $\overline{r}(u)$ versus $u \in [4, 10]$ for these three distribution-aware heuristics. The curves have a near-linear structure in the figure. Fig. \ref{fig:redirect}(a) also plots data for the proposed online algorithm, which does not know the distribution, using parameters $\alpha=1000$, $V=5$, $T=10^6$.  Each data point of the virtual system lies on the maximum of the three heuristic curves.  The time average reward in the actual system matches the virtual system for $u \in [4, 8.5]$, but significantly deviates for larger values of $u$ due to falling into a \emph{trapping state}. This is easily fixed by applying the Redirect feature described below (see Fig. \ref{fig:redirect}(b)). 

The trapping issue can be understood by observing the fractions of time in each state when $u=8$ (where virtual and actual data points in Fig. \ref{fig:redirect}(a) are similar) and $u=9$ (where virtual and actual data points  in Fig. \ref{fig:redirect}(a) are different). 

\begin{enumerate} 
\item No traps emerge: Fig. \ref{fig:gridWithoutRedirect} shows the fractions of time of the virtual system when $u=8$ (the fractions of time in the actual system are similar and are not shown). From the ``Not have'' data in Fig. \ref{fig:gridWithoutRedirect}, when it does not have an object, the robot almost never visits locations outside of the path 
$$1\rightarrow 6\rightarrow 11\rightarrow 16\rightarrow 17\rightarrow 18\rightarrow 19\rightarrow 14\rightarrow 13\rightarrow 8\rightarrow 9$$  
In fact, it learns to take actions similar to Heuristic 3: It almost always uses the same 3-hop path from $1$ to $16$. It then waits in location 16 for a small amount of time (slightly more than the 1-slot wait of Heuristic 3). If it sees a sufficiently large valued object in location 16 before it finishes its wait, it picks it up and goes home via the 3-hop path $16\rightarrow 11\rightarrow 6\rightarrow 1$. Else, it traverses the path $16\rightarrow17\rightarrow18\rightarrow  19\rightarrow14\rightarrow13\rightarrow8\rightarrow 9$.   It waits in location 9 until it sees an object that it views as being sufficiently valuable. It then collects this object and returns home on any 10-hop path. It does not always use the same path home, which is why Fig. \ref{fig:gridWithoutRedirect} shows nonzero probabilities for many locations under the ``Have'' states. The robot also learns efficient thresholds for deciding when an object is valuable enough to collect.  

\item Traps emerge: Figs.  \ref{fig:gridu9virtual} and \ref{fig:gridu9actual} consider the case $u=9$ and compare the fractions of time the virtual and actual system are in each basic state. There is a significant difference here, which explains the deviant behavior for $u=9$ in the curves of  Fig. \ref{fig:redirect}(a).  Consider Fig. \ref{fig:gridu9virtual}: Rewards in location 16 are now so high that the virtual system learns to avoid locations outside the set $\{1, 6, 11, 16\}$. 
It learns 
to use the strategy of starting from home, taking the path $1\rightarrow 6 \rightarrow 11\rightarrow 16$, waiting in location 16 until it sees an object of sufficiently large value, then returning home via $16\rightarrow 11\rightarrow 6 \rightarrow 1$. This creates a steady state distribution with many ``isolated'' states, being states that have near-zero probability and whose neighbors have near-zero probability.  In contrast, the fractions of time for the actual system are shown in Fig. \ref{fig:gridu9actual}. It is clear that the robot gets trapped in location 20, where it consistently chooses to stay.  The virtual system has (correctly) learned to avoid location 20, so its contingency actions for location 20 did not train enough to become efficient (they repeatedly are the inefficient action to remain in location 20). The actual robot rarely enters location 20, but when it does it uses the inefficient contingency actions of staying there. So it becomes trapped and earns near-zero reward.  This ``always stay in location 20'' strategy yields  a valid but undesirable solution to  the global balance equations. Getting trapped is a nonergodic event on the actual system, and so the overall average reward in the actual system changes from simulation to simulation depending on when the robot gets trapped.   Fortunately, such trap situations  are easy to detect: The robot observes that it spends an excessive amount of time in a state that the virtual system has assigned near-zero probability! 
The Redirect mode fixes the problem (as shown in Fig. \ref{fig:redirect}(b)).  
\end{enumerate}

\begin{figure}[htbp]
   \centering
   \includegraphics[width=6in]{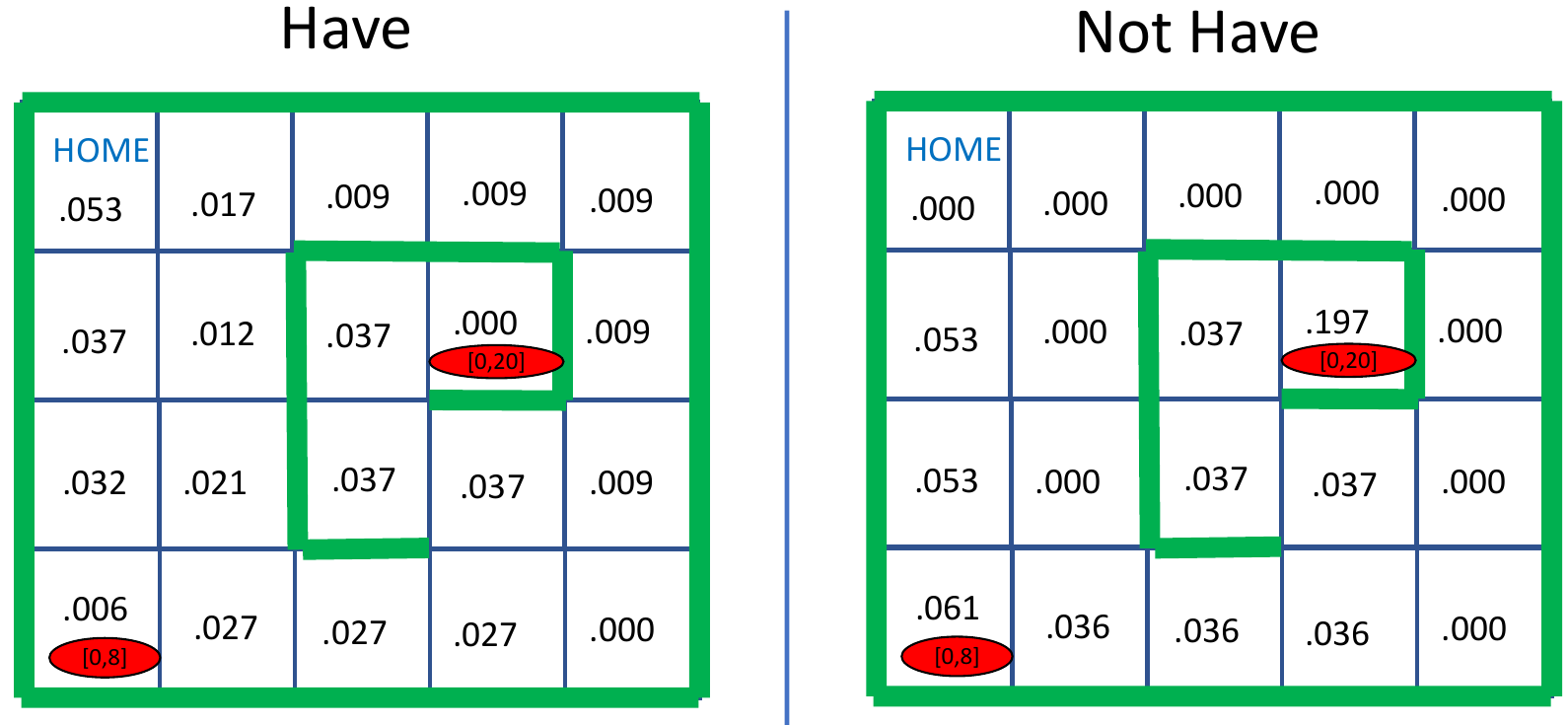} 
   \caption{Virtual system ($u=8$): Fractions of time the virtual system is in each basic state for the $u=8$ simulation of Fig. \ref{fig:redirect}(a).}
   \label{fig:gridWithoutRedirect}
\end{figure}

\begin{figure}[htbp]
   \centering
   \includegraphics[width=5in]{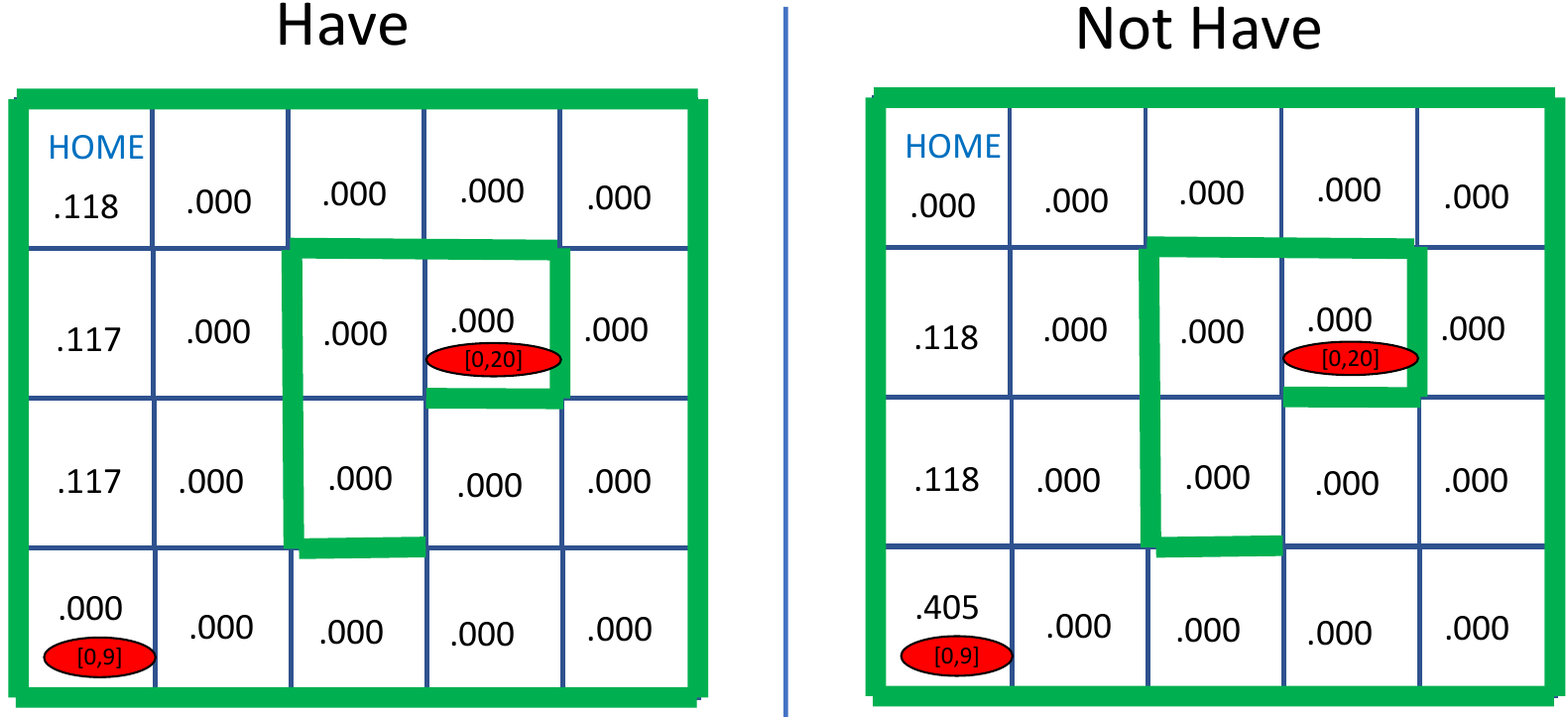} 
   \caption{Virtual system ($u=9$): Fractions of time the virtual system is in each basic state for the $u=9$ simulation of Fig. \ref{fig:redirect}(a). The virtual system correctly learns that it is not worthwhile to pursue the high-valued rewards in location 9. Rather, it learns to almost always be in locations in the set $\{1, 6, 11, 16\}$, which are locations on the West edge of the region.}
   \label{fig:gridu9virtual}
\end{figure}

\begin{figure}[htbp]
   \centering
   \includegraphics[width=5in]{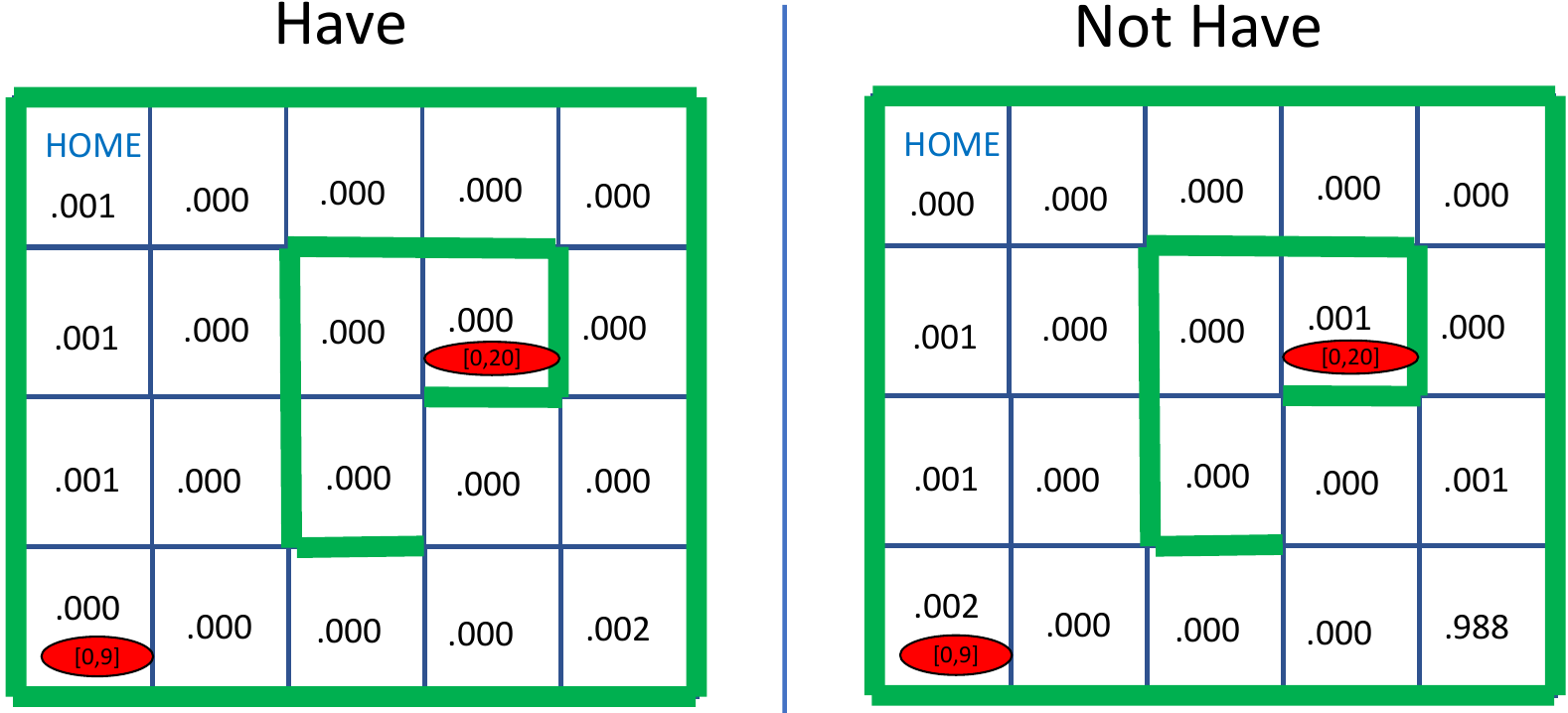} 
   \caption{Actual system for case $u=9$: Fractions of time the actual system is in each basic state for the $u=9$ simulation of Fig. \ref{fig:redirect}(a). The robot gets trapped in the undesirable state $(20,0)$, where its contingency action $A_{20}(t)$ tells it to stay in location $20$. The virtual contingency actions for state $(20,0)$ never learn to be efficient because the virtual system (correctly) allocates near-zero probability to location 20. This explains the low reward shown in Fig. \ref{fig:redirect}(a) for the actual system for the data points $u \in \{9, 9.5, 10\}$. The issue is fixed, leading to similar behavior in the virtual and actual systems, by introducing the Redirect mode (see Fig. \ref{fig:redirect}(b)).}
   \label{fig:gridu9actual}
\end{figure}

The Redirect mode is implemented in the actual system as follows: The robot maintains a running average time in each state over some window of past slots. This could be done, for example, using a window of the past 1000 slots, or using an exponentiated average with $\gamma=1/1000$ (we choose the latter in the simulation). If the actual robot is currently in a state with actual exponentiated average greater than some threshold $\theta_{high}$ (we use $\theta_{high}=1/10$ in this simulation), and the corresponding virtual exponentiated average is less than some threshold $\theta_{low}$ (we use $\theta_{low}=0.00001$ in this simulation), then it enters ``Redirect Mode'': In this mode, it ignores the contingency actions the virtual system tells it to take. Instead, it takes a direct path back to location 1, where it then exits redirect mode and follows the actions given to it by the virtual system.  In a more general MDP, the redirect mode can be any sequence of actions that ensure the robot travels to a particular desired location, say location 1, in finite expected time. 

\subsection{Average power constraint} \label{section:simulation-power}

Now assume the robot expends 1 unit of power on each slot that it moves without holding an object, 2 units of energy if it moves while holding an object (since the object is heavy), and 0 units when  staying in the same location. The reward distribution is the same as above with $u=4$. We impose an average power constraint 
$$ \lim_{T\rightarrow\infty} \frac{1}{T}\sum_{t=0}^{T-1} power(t)\leq 0.9$$
We use Heuristic 2 for comparison, which by renewal-reward theory yields average reward $\overline{r}$ and average power $\overline{p}$ in terms of $\theta \in [0, 20)$ of
$$\overline{r} = \frac{\frac{1}{2}(\theta+20)}{19 + \frac{40}{20-\theta}} \quad , \quad \overline{p} = \frac{10 + 2(10)}{19 + \frac{40}{20-\theta}}$$
Optimizing $\theta \in [0, 20)$ to maximize $\overline{r}$ subject to $\overline{p}\leq 0.9$ for Heuristic 2 yields $\theta^*=17.2093$ and $\overline{r}^*=0.55814$, $\overline{p}^*=0.9$.  The proposed online algorithm (with Redirect) with $\alpha=1000$, $V=5$, $T=10^6$ yields: 
\begin{itemize} 
\item Virtual system: $(\overline{r}, \overline{p}) = (0.5693, 0.8996)$
\item Actual system $(\overline{r}, \overline{p}) = (0.5559, 0.8859)$. 
\end{itemize} 
which is competitive with the heuristic, suggesting the heuristic is near-optimal.  Across independent simulations, the average reward of the virtual system is similar to the reported value $0.5693$ above, which is slightly larger than the  $0.55814$ value of the heuristic, suggesting that true optimality is slightly better than the heuristic.  We expected more significant digits to match between the virtual and actual systems, a different set of trap detection thresholds may improve this discrepancy.

\subsection{Comparison with stochastic approximation on a value function} 

This section compares to stochastic approximation on a cost-to-go function, also called a value function. The value function does not incorporate additional time average constraints $\overline{C}_l\leq 0$, so we compare on the robot example without the average power constraint (that is, $k=0$).  For simplicity of the online comparison, our value function method shall approximate the time average reward problem by a discounted problem with discount factor $\rho = 999/1000$.  A traditional value function method 
would use the full state vector $(S(t),W(t))$, which has large dimension with infinite possibilities, see for example \cite{bertsekas-neural}\cite{bertsekas-dp}\cite{puterman}. Temporal difference techniques could be used to approximate a value function on  $(S(t),W(t))$ according to a function with simple structure \cite{bertsekas-neural}. However, 
a fair comparison directly uses the opportunistic MDP structure of our problem that allows a value function $J(i)$ to be defined only on states $i \in \script{S}$, where $J(i)$ is the optimal expected discounted reward given we start in state $i$. Then
\begin{align}\label{eq:DP}
J(i) = \expect{\max_{A_i \in \script{A}_i} \left(-c_{i,0}(W(t), A_i(t))  + \rho \sum_{j\in\script{S}} p_{i,j}(W(t),A_i(t))J(j) \right)} \quad \forall i \in \script{S}
\end{align} 
where the inner maximization is done with knowledge of $W(t)$, and the expectation is taken with respect to the distribution of $W(t)$.  Since the distribution of $W(t)$ is unknown we use the following online stochastic approximation: Initialize $J_{-1}(i)=0$ for $i \in \script{S}$.  On each step $t \in \{0, 1, 2, \ldots\}$ do
\begin{itemize} 
\item Observe $W(t)$. For each $i \in \script{S}$ compute $A_i(t) \in \script{A}_i$ as the maximizer of 
$$-c_{i,0}(W(t), A_i(t)) + \rho \sum_{j \in \script{S}}p_{i,j}(W(t),A_i(t))J_{t-1}(j)$$
treating $W(t)$ and $J_{t-1}(j)$ as known constants. 
Define $\tilde{J}_t(i)$ as the maximized value of the above expression. 
\item For $i \in \script{S}$ update the value function by 
\begin{equation} \label{eq:RM}
 J_t(i) = (1-\eta) J_{t-1}(i) + \eta \tilde{J}_t(i) \quad \forall i \in \script{S}
 \end{equation} 
where $\eta \in (0,1)$ is a stepsize. 
\item Given $S(t)=i$, apply action $A_i(t)$ as computed above. 
\end{itemize} 
The complexity of this online value function based algorithm is competitive with our proposed approach. In particular, this method maintains a value $J_t(i)$ for each $i \in \script{S}$, while the proposed method (with no additional cost constraints) uses a virtual queue $Q_i(t)$ for each $i \in \script{S}$.  This value function based approach is presented here as a heuristic: At best it approximates a solution to \eqref{eq:DP}, which is itself a discounted approximation to the infinite horizon time average problem of interest. Nevertheless,  the iteration \eqref{eq:RM} resembles a classic Robbins-Monro stochastic approximation (see \cite{robbins-monro}) and can likely be analyzed according to such techniques. Further, this value function based method (called the $J$-based method in Fig. \ref{fig:J-compare}) simulates remarkably well for the robot problem with no additional time average constraints (that is, no average power constraint). Indeed, using $\gamma=1/1000$ and duplicating the scenario of Fig. \ref{fig:redirect}b, we simulate this $J$-based method and compare to the actual and virtual rewards of the proposed algorithm (where actual rewards use the redirect mode). The results are shown in Fig. \ref{fig:J-compare}, which shows three curves that look very similar, all appearing to reach near optimality (where optimality is defined by Theorems \ref{thm:converse} and \ref{thm:achievability}).  Of course, this $J$-based heuristic cannot handle extended problems with time average inequality constraints, while our proposed algorithm handles these easily. 

\begin{figure}[htbp]
   \centering
   \includegraphics[height=3in]{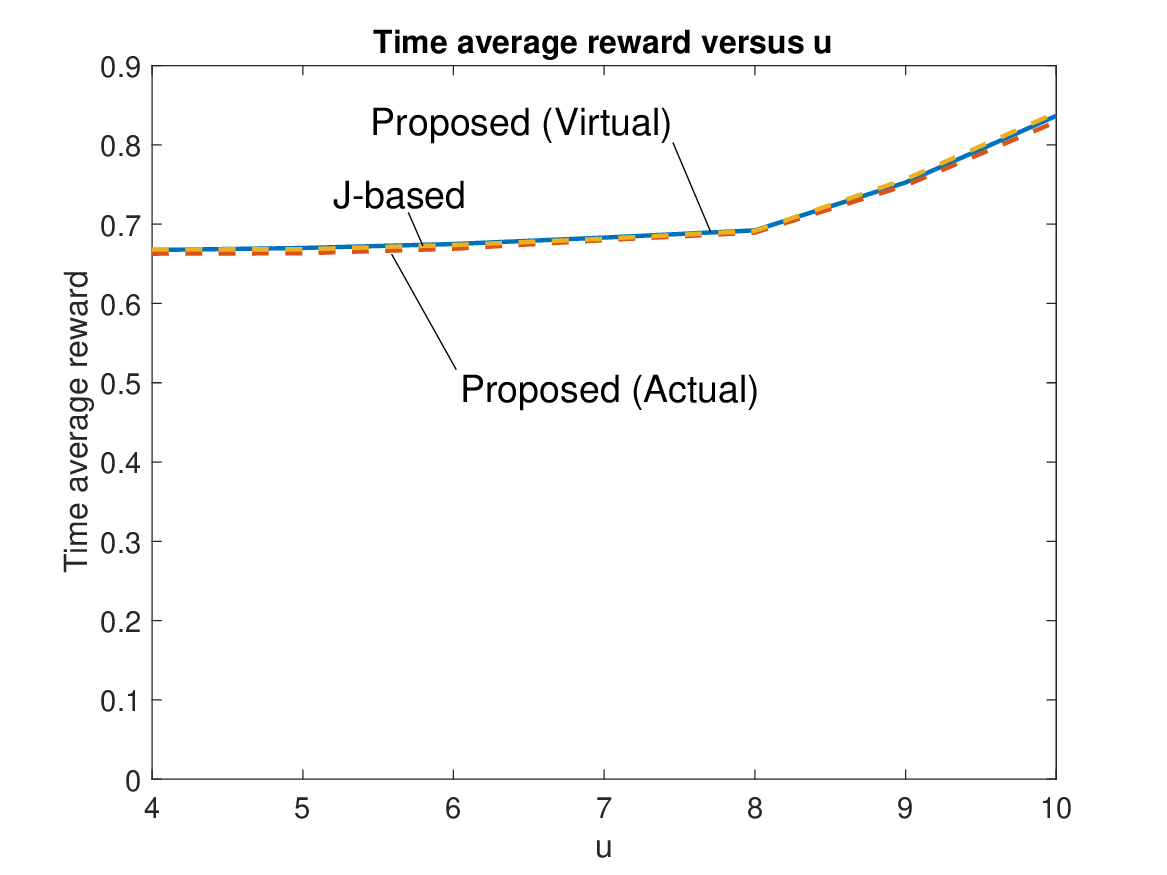} 
   \caption{Comparing the proposed algorithm with the $J$-based heuristic for the same scenario as Fig. \ref{fig:redirect}b (parameter $u \in [4, 10]$). Proposed virtual  is solid blue; proposed actual is dashed red; $J$-based is dashed yellow.}
   \label{fig:J-compare}
\end{figure}

\section{Conclusion} 

This work extends the max-weight and drift-plus-penalty methods of stochastic network optimization to opportunistic
Markov decision systems. The basic state variable $S(t)$ can take one of $n$ values, but the full
state $(S(t),W(t))$ is augmented by a sequence of i.i.d. random vectors $\{W(t)\}$ that can be observed
at the start of each slot $t$ but have an unknown distribution. Learning in this system is mapped to the problem of achieving a collection of time average global balance constraints.  The resulting algorithm operates on a virtual and actual system at the same time. For any $\epsilon>0$, parameters of the algorithm can be chosen to ensure performance in the virtual system is within $O(\epsilon)$ of optimality after a convergence time of $1/\epsilon^2$.  The coefficient multiplier, and hence overall system performance,  depends only on the number of basic states $n$ and is independent of the dimension of $W(t)$ and the (possibly infinite) number of values $W(t)$ can take. The actual system is shown mathematically to have the same conditional probabilities and expectations as the virtual system, and is shown in simulation to closely match the virtual system in its time average rewards and fractions of time in each basic state.  The online algorithm is augmented with a Redirect mode to detect and alleviate issues regarding trapping states that relate to non-irreducible situations.  Simulations on the robot example show  the proposed online algorithm, which does not know the distribution of the reward vector, has time averages that are close to 
the optimum over a class of heuristic renewal-based algorithms that are fine tuned with knowledge of the problem structure and the probability distribution.

\section*{Appendix A -- Proof of final part of Theorem \ref{thm:performance}}

\begin{proof} (Inequality \eqref{eq:queuebound}) Rearranging \eqref{eq:lemma6sub} gives 
\begin{align}
\expect{\Delta(t)} &\leq b + Vc_0^*-V\expect{\pi(t-1)^{\top}G_0(t-1)} \nonumber\\
&\quad +  \alpha\expect{D(\pi^*;\pi(t-1))-D(\pi^*;\pi(t))} \\
&\quad - \frac{\alpha}{2}\expect{\norm{\pi(t)-\pi(t-1)}_1^2} + Vc_{max}\expect{\norm{\pi(t)-\pi(t-1)}_1} \label{eq:sub56} 
\end{align}
Since $\pi(t-1)$ is independent of $W(t-1)$, the Lagrange multiplier assumption \eqref{eq:LM-consequence} implies 
\begin{align}
c_0^* - \expect{\pi(t-1)^{\top}G_0(t-1)}  &\leq \expect{\pi(t-1)Y(t-1)}\lambda+\expect{\pi(t-1) G(t-1)}\mu \label{eq:L1}\\
&\leq\expect{\pi(t)Y(t-1)}\lambda+\expect{\pi(t) G(t-1)}\mu \nonumber \\
&\quad + (\norm{\lambda}_1 + c_{max}\norm{\mu}_1)\norm{\pi(t)-\pi(t-1)}_1  \label{eq:L2} \\
&\leq \expect{Q(t+1)-Q(t)}\lambda+\expect{Z(t+1)-Z(t)}\mu \nonumber \\
&\quad + (\norm{\lambda}_1 + c_{max}\norm{\mu}_1)\norm{\pi(t)-\pi(t-1)}_1 \label{eq:L3}
\end{align}
where \eqref{eq:L1} holds by writing the Lagrange multiplier inequality \eqref{eq:LM-consequence}  in a simpler matrix form; \eqref{eq:L2} holds because all components of matrix $Y(t-1)$ have magnitude at most $1$, all components of matrix $G(t-1)$ have magnitude at most $c_{max}$; \eqref{eq:L3} holds by the update equations \eqref{eq:q-update}, \eqref{eq:z-update} and the fact $\mu_l\geq 0$ for $l\in\{1, \ldots, k\}$.

Substituting \eqref{eq:L3} into \eqref{eq:sub56} gives 
\begin{align}
\expect{\Delta(t)} &\leq b + V\expect{Q(t+1)-Q(t)}\lambda+V\expect{Z(t+1)-Z(t)}\mu \nonumber\\
&\quad +  \alpha\expect{D(\pi^*;\pi(t-1))-D(\pi^*;\pi(t))} \nonumber \\
&\quad - \frac{\alpha}{2}\expect{\norm{\pi(t)-\pi(t-1)}_1^2} + Vd\expect{\norm{\pi(t)-\pi(t-1)}_1} \label{eq:Thmb}
\end{align}
where $d=c_{max}(1+\norm{\mu}_1) + \norm{\lambda}_1$.  The last two terms on the right-hand-side have the bound
$$-\frac{\alpha}{2}\norm{\pi(t)-\pi(t-1)}_1^2 + Vd\norm{\pi(t)-\pi(t-1)}_1 \leq \sup_{x\in\mathbb{R}}\left(-\frac{\alpha}{2}x^2 + Vdx \right) = \frac{V^2d^2}{2\alpha} $$
Substituting this into the right-hand-side of \eqref{eq:Thmb} gives
\begin{align}
\expect{\Delta(t)} &\leq b + V\expect{Q(t+1)-Q(t)}\lambda+V\expect{Z(t+1)-Z(t)}\mu \nonumber\\
&\quad +  \alpha\expect{D(\pi^*;\pi(t-1))-D(\pi^*;\pi(t))}+ \frac{V^2d^2}{2\alpha} \label{eq:Thmb2}
\end{align}
Summing \eqref{eq:Thmb2} over $t \in \{0, \ldots, T-1\}$ and using $L(0)=0$, $Q(0)=0$, $Z(0)=0$ and $D(\pi^*; \pi(-1))-D(\pi^*, \pi(t))\leq \log(n)$ gives
$$\expect{L(T)} \leq bT + V\expect{Q(T)}\lambda + V\expect{Z(T)}\mu + \alpha \log(n) + \frac{V^2d^2T}{2\alpha}$$
Substituting the definition $L(T)=\frac{1}{2}\norm{J(T)}^2$, where $J(T)=(Q(T);Z(T))$, and using the Cauchy-Schwarz inequality yields 
$$\frac{1}{2}\expect{\norm{J(T)}^2} \leq bT + V\expect{\norm{J(T)}}\cdot\norm{(\lambda;\mu)} + \alpha \log(n) + \frac{V^2d^2T}{2\alpha}$$
Define $y=\expect{\norm{J(T)}}$. By Jensen's inequality we have $y^2\leq \expect{\norm{J(T)}^2}$ and so 
\begin{equation} \label{eq:quadratic} 
\frac{1}{2}y^2 \leq Vy\norm{(\lambda;\mu)} + r
\end{equation} 
where we define 
$$ r = bT + \alpha\log(n) + \frac{V^2d^2T}{2\alpha}$$
The quadratic formula applied to \eqref{eq:quadratic}  implies 
$$ y \leq V\norm{(\lambda;\mu)} + \sqrt{V^2\norm{(\lambda;\mu)}^2 +2r}$$
which proves \eqref{eq:queuebound}.
\end{proof} 

\begin{proof} (Inequalities \eqref{eq:Dav} and \eqref{eq:Dav2}) Rearranging \eqref{eq:Thmb} gives 
\begin{align*}
\frac{\alpha}{4}\expect{\norm{\pi(t)-\pi(t-1)}_1^2} &\leq b - \expect{\Delta(t)} + V\expect{Q(t+1)-Q(t)}\lambda + V\expect{Z(t+1)-Z(t)}\mu\\
&\quad + \alpha\expect{D(\pi^*;\pi(t-1))-D(\pi^*;\pi(t))}\\
&\quad - \frac{\alpha}{4}\expect{\norm{\pi(t)-\pi(t-1)}_1^2} + Vd\expect{\norm{\pi(t)-\pi(t-1)}_1}
\end{align*}
To bound the final terms on the right-hand-side, we have 
$$-\frac{\alpha}{4}\norm{\pi(t)-\pi(t-1)}_1^2 + Vd\norm{\pi(t)-\pi(t-1)}_1 \leq \sup_{x\in\mathbb{R}}\left(-\frac{\alpha}{4}x^2+ Vdx\right) = \frac{V^2d^2}{\alpha}$$
Thus
\begin{align*}
\frac{\alpha}{4}\expect{\norm{\pi(t)-\pi(t-1)}_1^2} &\leq b - \expect{\Delta(t)} + V\expect{Q(t+1)-Q(t)}\lambda + V\expect{Z(t+1)-Z(t)}\mu\\
&\quad + \alpha\expect{D(\pi^*;\pi(t-1))-D(\pi^*;\pi(t))} + \frac{V^2d^2}{\alpha}
\end{align*}
Summing over $t \in \{0, \ldots,T\}$ and using $J(0)=0$ gives
\begin{align*}
\frac{\alpha}{4}\sum_{t=0}^T\expect{\norm{\pi(t)-\pi(t-1)}_1^2} \leq (b + \frac{V^2d^2}{\alpha})(T+1)- \frac{1}{2}\expect{\norm{J(T+1)}^2}+ V\expect{J(T+1)^{\top}(\lambda;\mu)}  +\alpha \log(n)
\end{align*}
To bound terms on the right-hand-side, we have 
\begin{align*}
-\frac{1}{2}\norm{J(T+1)}^2 + VJ(t+1)^{\top}(\lambda;\mu) &\leq \sup_{x \in\mathbb{R}}\left(-\frac{x^2}{2} + Vx\norm{(\lambda;\mu)} \right) = \frac{V^2\norm{(\lambda;\mu)}^2}{2}
\end{align*}
Substituting this into the previous inequality gives
\begin{align}
\frac{\alpha}{4}\sum_{t=0}^T\expect{\norm{\pi(t)-\pi(t-1)}_1^2} \leq (b + \frac{V^2d^2}{\alpha})(T+1)+\alpha \log(n) + \frac{V^2\norm{(\lambda;\mu)}^2}{2} \label{eq:independent-interest}
\end{align}
Neglecting the $t=0$ term on the left-hand-side, dividing both sides by $T$, and using Jensen's inequality gives 
\begin{align*}
\frac{\alpha}{4}\left(\frac{1}{T}\sum_{t=1}^T\expect{\norm{\pi(t)-\pi(t-1)}_1}\right)^2 \leq (b + \frac{V^2d^2}{\alpha})(1+\frac{1}{T})+\frac{\alpha \log(n)}{T} + \frac{V^2\norm{(\lambda;\mu)}^2}{2T}
\end{align*}
and so 
$$ \frac{1}{T}\sum_{t=1}^T\expect{\norm{\pi(t)-\pi(t-1)}_1} \leq \sqrt{(\frac{4b}{\alpha}+ \frac{4V^2d^2}{\alpha^2})(1+\frac{1}{T}) + \frac{4\log(n)}{T} + \frac{2V^2\norm{(\lambda;\mu)}^2}{\alpha T}}$$
Substituting this into \eqref{eq:vqq} and \eqref{eq:vqz} proves \eqref{eq:Dav} and \eqref{eq:Dav2}.
\end{proof}

\

Of independent interest, the inequality \eqref{eq:independent-interest} ensures the expectations of $\norm{\pi(t)-\pi(t-1)}_1^2$ tend to be small. For example, fix $\epsilon>0$ and define $V=1/\epsilon$ and $\alpha=1/\epsilon^2$. Then \eqref{eq:independent-interest} implies 
$$ \frac{1}{T}\sum_{t=0}^T\expect{\norm{\pi(t)-\pi(t-1)}_1^2} \leq O(\epsilon^2) \quad \forall T \geq 1/\epsilon^2$$

\section*{Appendix B -- Proof of Theorems \ref{thm:converse} and \ref{thm:achievability}} 
\setcounter{subsection}{0}

This appendix develops several lemmas and them uses them to prove Theorems  \ref{thm:converse} and \ref{thm:achievability}.  Recall that for each $i \in \script{S}$, the set $\Gamma_i\subseteq\mathbb{R}^{|\script{I}|}$ is defined in \eqref{eq:gamma-def}.

\subsection{Properties of $\Gamma_i$}

\begin{lem} \label{lem:convex} For each $i \in \script{S}$, the set $\Gamma_i$ is bounded and convex. Its closure $\overline{\Gamma}_i$ is compact and convex. 
\end{lem}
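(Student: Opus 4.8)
The plan is to establish the three claims in order: boundedness of $\Gamma_i$ follows at once from the uniform bounds on costs and transition probabilities; convexity of $\Gamma_i$ follows from a time-sharing argument built on the independent uniform seed $U$; and compactness and convexity of $\overline{\Gamma}_i$ then reduce to elementary facts about subsets of $\mathbb{R}^{|\script{I}|}$.

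For boundedness, note that every coordinate of $g_i(w,a)$ is either a cost $c_{i,l}(w,a)\in[-c_{max},c_{max}]$ or a transition probability $p_{i,j}(w,a)\in[0,1]$, so $g_i$ maps into a fixed bounded box in $\mathbb{R}^{|\script{I}|}$; taking expectations preserves these componentwise bounds, hence $\Gamma_i$ is contained in that same box.

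The substantive step is convexity. Fix $\gamma^{(0)},\gamma^{(1)}\in\Gamma_i$, realized respectively by $\alpha_i^{(0)},\alpha_i^{(1)}\in\script{C}_i$ via \eqref{eq:gamma-def}, and fix $\theta\in(0,1)$ (the cases $\theta\in\{0,1\}$ being trivial). I would splice the two policies using the randomization seed: set $\alpha_i(w,u)=\alpha_i^{(0)}(w,u/\theta)$ for $u\in[0,\theta)$ and $\alpha_i(w,u)=\alpha_i^{(1)}(w,(u-\theta)/(1-\theta))$ for $u\in[\theta,1]$. This $\alpha_i$ lies in $\script{C}_i$: it is the gluing, over the measurable partition $\{\script{W}\times[0,\theta),\,\script{W}\times[\theta,1]\}$ of $\script{W}\times[0,1]$, of the compositions of the measurable maps $\alpha_i^{(j)}$ with affine (hence measurable) reparametrizations of $u$. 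Because $W$ and $U$ are independent and $U\sim\mbox{Unif}[0,1]$, conditioned on $\{U<\theta\}$ (an event of probability $\theta$) the pair $(W,U/\theta)$ has the same joint law as an independent pair $(W,U')$ with $U'\sim\mbox{Unif}[0,1]$, and likewise conditioned on $\{U\ge\theta\}$ the pair $(W,(U-\theta)/(1-\theta))$ has the law of $(W,U'')$ with $U''\sim\mbox{Unif}[0,1]$. Conditioning the expectation on where $U$ lands therefore gives $\expect{g_i(W,\alpha_i(W,U))}=\theta\,\gamma^{(0)}+(1-\theta)\,\gamma^{(1)}$, so this point belongs to $\Gamma_i$.

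Finally, $\overline{\Gamma}_i$ is closed by construction and bounded as the closure of a bounded subset of the finite-dimensional space $\mathbb{R}^{|\script{I}|}$, hence compact by Heine--Borel; and since the closure of a convex set is convex, $\overline{\Gamma}_i$ is convex. I expect the only real obstacle to be the bookkeeping in the convexity step --- checking joint measurability of the spliced policy $\alpha_i:\script{W}\times[0,1]\to\script{A}_i$ and carefully justifying the conditional-law identity. It is worth noting that this argument uses only independence of $W$ and $U$ together with uniformity of $U$; no Borel-space structure on $\script{W}$ is needed, consistent with $(\script{W},\script{G}_W)$ being allowed to be an arbitrary measurable space.
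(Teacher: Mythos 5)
Your proposal is correct and follows essentially the same route as the paper's proof: boundedness from the uniform bounds on $c_{i,l}$ and $p_{i,j}$, convexity by splicing the two policies on the uniform seed $U$ (via the rescalings $u/\theta$ and $(u-\theta)/(1-\theta)$) and using independence of $W$ and $U$ to identify the conditional laws, and then compactness/convexity of $\overline{\Gamma}_i$ from standard facts in $\mathbb{R}^{|\script{I}|}$. The extra remarks on measurability of the glued policy and on not needing Borel structure for $\script{W}$ are accurate but do not change the argument.
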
 

\begin{proof} 
Set $\Gamma_i$ is bounded because all components of $g_i$ in \eqref{eq:g} are bounded. 
To show $\Gamma_i$ is convex, fix $\gamma_1, \gamma_2 \in \Gamma_i$ and $\theta \in (0,1)$. Define $(W,U)=(W(0),U(0))$. By definition of $\Gamma_i$ we have 
\begin{align*}
\gamma_1 &=\expect{g_i(W, \alpha(W, U))} \\
\gamma_2 &= \expect{g_i(W, \beta(W, U))} 
\end{align*}
for some functions $\alpha, \beta \in \script{C}_i$. Define a new function $\phi \in \script{C}_i$ by 
$$ \phi(w,u) = \left\{\begin{array}{cc}
\alpha(w,\frac{u}{\theta}) & \mbox{if $u \in [0, \theta]$} \\
\beta(w,\frac{u-\theta}{1-\theta}) & \mbox{if $u\in (\theta, 1]$}
\end{array}\right.$$
for $w \in \script{W}$ and $u\in [0,1]$. 
By definition of $\Gamma_i$ it holds that $\expect{g_i(W, \phi(W,U))} \in \Gamma$. Since $U \sim \mbox{Unif}[0,1]$ we have
\begin{align*}
\expect{g_i(W,\phi(W,U))} &= \expect{g_i(W,\phi(W,U))|U\leq \theta}\theta + \expect{g_i(W, \phi(W,U))|U>\theta}(1-\theta) \\
&= \expect{g_i(W,\alpha(W,U/\theta))|U\leq \theta}\theta + \expect{g_i(W, \beta(W,\mbox{$\frac{U-\theta}{1-\theta}$}))|U>\theta}(1-\theta) \\
&\overset{(a)}{=} \expect{g_i(W,\alpha(W,U))}\theta + \expect{g_i(W, \beta(W,U))}(1-\theta) \\
&=\theta \gamma_1 + (1-\theta)\gamma_2
\end{align*}
where equality (a) uses the fact that $W$ and $U\sim \mbox{Unif}[0,1]$ are independent and so 
\begin{itemize} 
\item The conditional distribution of $(W, U/\theta)$, given $\{U\leq \theta\}$, is the same as the unconditional distribution of $(W,U)$.  
\item The conditional distribution of $(W, \mbox{$\frac{U-\theta}{1-\theta}$})$, given $\{U>\theta\}$,  is the same as the unconditional distribution of $(W,U)$.  
\end{itemize} 
Thus,  $\theta \gamma_1 + (1-\theta)\gamma_2 \in \Gamma_i$, so  $\Gamma_i$ is convex.  As $\Gamma_i$ is bounded and convex, $\overline{\Gamma}$ is compact and convex. 
\end{proof} 

\

\begin{lem} \label{lem:in-gamma0} Fix $i \in \script{S}$. Let $W:\Omega\rightarrow\script{W}$ be a random element with the same distribution as $W(0)$. Let $A_i:\Omega\rightarrow\script{A}_i$ be any random element (possibly dependent on $W$). Then 
$\expect{g_i(W,A)} \in \Gamma_i$.
\end{lem}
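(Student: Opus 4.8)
The plan is to realize the joint distribution of the pair $(W,A_i)$ by a memoryless, software-randomized rule of exactly the form that defines $\Gamma_i$ in \eqref{eq:gamma-def}. Concretely, I would produce a measurable function $\alpha_i \in \script{C}_i$ such that, with $W(0)$ and $U(0)$ the independent pair used in that definition, the random element $(W(0),\alpha_i(W(0),U(0)))$ has the same joint distribution as $(W,A_i)$. Since $g_i$ in \eqref{eq:g} is bounded and measurable, equality of joint laws immediately gives $\expect{g_i(W(0),\alpha_i(W(0),U(0)))}=\expect{g_i(W,A_i)}$, and the left-hand side lies in $\Gamma_i$ by definition, which finishes the proof. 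Note this produces a point of $\Gamma_i$ itself, not merely of $\overline{\Gamma}_i$, consistent with the statement.

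The construction of $\alpha_i$ proceeds in two standard steps. First, because $(\script{A}_i,\script{G}_i)$ is a Borel space, there is a regular conditional distribution of $A_i$ given $W$: a probability kernel $\kappa:\script{W}\times\script{G}_i\rightarrow[0,1]$ with $\kappa(\cdot,B)$ measurable for each $B\in\script{G}_i$ and $\kappa(w,\cdot)$ a version of $P[A_i\in\cdot\,|\,W=w]$. Hence the joint law of $(W,A_i)$ factors as (law of $W$) composed with $\kappa$, and since $W$ and $W(0)$ have the same distribution it suffices to build a rule that, fed $W(0)$ and an independent $\mbox{Unif}[0,1]$ seed, reproduces $\kappa$ conditionally.

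Second, I would invoke the randomization/representation lemma --- this is precisely the representation theorem alluded to in the earlier footnote about $V(-1)$ and Appendix C: since $\script{A}_i$ is a Borel space, the kernel $\kappa$ admits a jointly measurable driver $h:\script{W}\times[0,1]\rightarrow\script{A}_i$ such that $h(w,U)$ has distribution $\kappa(w,\cdot)$ for every $w\in\script{W}$ whenever $U\sim\mbox{Unif}[0,1]$. (The usual recipe: fix a Borel isomorphism of $\script{A}_i$ with a Borel subset of $[0,1]$, push $\kappa$ forward, take the generalized inverse of the resulting family of CDFs --- jointly measurable in $(w,u)$ by measurability of $\kappa$ --- and pull back.) Setting $\alpha_i:=h\in\script{C}_i$ and using the independence of $W(0)$ and $U(0)$, the pair $(W(0),\alpha_i(W(0),U(0)))$ has first marginal equal to the law of $W$ and conditional second coordinate $\kappa(W(0),\cdot)$, i.e.\ exactly the joint law of $(W,A_i)$, completing the argument.

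The only genuinely nontrivial ingredient is the second step --- the existence of a jointly measurable function of $(w,u)$ driving a prescribed kernel --- and this is exactly where the hypothesis that each $\script{A}_i$ is a Borel space is essential, and cannot be dropped in view of the Blackwell-type counterexamples cited earlier. I expect the actual write-up simply to cite this fact (or the Appendix C representation theorem); everything else --- factoring the joint law, passing expectations through equal laws, and checking that $\alpha_i$ is measurable into the product space --- is routine, using only that $g_i$ is bounded.
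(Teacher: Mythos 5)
Your argument is correct, and it arrives at the same witness as the paper: a memoryless rule $\alpha_i\in\script{C}_i$ with $(W(0),\alpha_i(W(0),U(0)))$ matching the law of $(W,A_i)$, after which membership in $\Gamma_i$ is immediate from \eqref{eq:gamma-def} and boundedness of $g_i$. The difference is in how that rule is obtained. The paper's proof is a one-step citation of its Appendix C representation result (Lemma~\ref{lem:coupling}, an extension of Kallenberg's transfer/coupling proposition): after extending the probability space to get an auxiliary independent $\mbox{Unif}[0,1]$ variable, it writes $A_i=\alpha_i(W,U)$ \emph{surely on the same space}, with $U$ uniform and independent of $W$, and concludes directly since $(W,U)$ has the same law as $(W(0),U(0))$. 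You instead work purely at the level of distributions: disintegrate the joint law of $(W,A_i)$ into the law of $W$ and a regular conditional distribution $\kappa$ (which exists because the target $\script{A}_i$ is Borel, even though $\script{W}$ need not be), then realize $\kappa$ by a jointly measurable driver via the randomization lemma. This buys you a slightly more self-contained argument that avoids both the same-space/sure representation and the probability-space extension, since only an expectation of a bounded measurable functional is needed; what it costs is re-deriving machinery the paper needs anyway, because in the later Lemmas~\ref{lem:in-gamma} and \ref{lem:causal-in-gamma} the same-space representation of Lemma~\ref{lem:coupling} (with additional conditioning variables such as $X$ or $(S(t),1_F)$) is genuinely used, so citing it here comes essentially for free. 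Both routes hinge on exactly the hypothesis you flag, that each $\script{A}_i$ is a Borel space.
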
 

\begin{proof} 
Without loss of generality, assume there is a $V\sim \mbox{Unif}[0,1]$ that is independent of $(W, A_i)$ (if this is not true, extend the probability space using standard product space concepts and note that this does not change any expectations). Since $A_i:\Omega\rightarrow \script{A}_i$ and $(\script{A}_i,\script{G}_i)$ is a Borel space, we have by Lemma \ref{lem:coupling} (in Appendix C) that 
$A_i = \alpha_i(W, U)$ for some random variable $U\sim Unif[0,1]$ that is independent of $W$ and some 
measurable function $\alpha_i:\script{W}\times [0,1]\rightarrow\script{A}_i$. That is, $\alpha_i \in \script{C}_i$.  Then 
$$\expect{g_i(W, A_i)} = \expect{g_i(W, \alpha_i(W,U))}  \in \Gamma_i$$
where the final inclusion holds by definition of $\Gamma_i$ in \eqref{eq:gamma-def}. 
\end{proof} 

\

\begin{lem} \label{lem:in-gamma} Fix $i \in \script{S}$. Let $W:\Omega\rightarrow\script{W}$ be a random element with the same distribution as $W(0)$. Let $X:\Omega\rightarrow[0,\infty)$ be a  nonnegative random variable that is independent of $W$ 
and that has finite $\expect{X}$. Let $A_i:\Omega\rightarrow\script{A}_i$ be a random element (possibly dependent on $(W,X)$). 
Then 
\begin{equation} \label{eq:in-X-gamma}
\expect{Xg_i(W,A_i)} \in \expect{X}\Gamma_i  
\end{equation} 
where the right-hand-side uses the Minkowski set scaling $\expect{X}\Gamma_i=\{\expect{X}\gamma : \gamma \in \Gamma_i\}$.
\end{lem}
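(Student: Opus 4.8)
The plan is to reduce to Lemma \ref{lem:in-gamma0} (the special case $X\equiv 1$) by an exponential change of probability measure with $X$ as the density. First I would dispose of the degenerate case $\expect{X}=0$: then $X=0$ almost surely, so $\expect{Xg_i(W,A_i)}=0$, and since $\script{C}_i$ contains the constant actions the set $\Gamma_i$ is nonempty, whence $\expect{X}\Gamma_i=0\cdot\Gamma_i=\{0\}\ni 0$. I would also note at the outset that, because every component of $g_i$ is bounded (by some constant $M$), we have $\expect{X\norm{g_i(W,A_i)}}\le M\expect{X}<\infty$, so $\expect{Xg_i(W,A_i)}$ is a well-defined finite vector.

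Assume now $\expect{X}>0$ and define a probability measure $\widetilde P$ on $(\Omega,\script{F})$ by $d\widetilde P/dP=X/\expect{X}$; this is legitimate since $X\ge 0$ and $\expect{X/\expect{X}}=1$. The crucial point is that independence of $X$ and $W$ makes this tilting act trivially on the law of $W$: for every measurable $B$,
\[
\widetilde P[W\in B]=\expect{\frac{X}{\expect{X}}1_{\{W\in B\}}}=\expect{\frac{X}{\expect{X}}}\,P[W\in B]=P[W\in B],
\]
so under $\widetilde P$ the random element $W$ has the same distribution that $W(0)$ has under $P$. Moreover $A_i:\Omega\to\script{A}_i$ is still a random element on $(\Omega,\script{F},\widetilde P)$ with arbitrary dependence on $W$, since measurability does not involve the measure.

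Next I would apply Lemma \ref{lem:in-gamma0} on the probability space $(\Omega,\script{F},\widetilde P)$ (extending the space by an independent $V\sim\mbox{Unif}[0,1]$ if necessary, exactly as in the proof of that lemma), obtaining $\widetilde{\mathbb{E}}[g_i(W,A_i)]\in\Gamma_i$, where $\widetilde{\mathbb{E}}$ denotes expectation under $\widetilde P$. Since $\widetilde{\mathbb{E}}[g_i(W,A_i)]=\expect{\frac{X}{\expect{X}}g_i(W,A_i)}=\frac{1}{\expect{X}}\expect{Xg_i(W,A_i)}$, multiplying through by $\expect{X}$ gives $\expect{Xg_i(W,A_i)}\in\expect{X}\Gamma_i$, which is the claim.

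The main obstacle to watch is verifying that the change of measure preserves exactly the hypotheses of Lemma \ref{lem:in-gamma0}, namely that the distribution of $W$ is unchanged (this is precisely where independence of $X$ and $W$ is used, and the argument would fail without it) and that $A_i$ remains a legitimate random element with arbitrary dependence on $W$ (which is automatic). A secondary subtlety is that the conclusion must land in $\Gamma_i$ rather than only its closure: a more elementary route — condition on $X$, apply Lemma \ref{lem:in-gamma0} conditionally, then integrate against the law of $X$ using convexity of $\Gamma_i$ (Lemma \ref{lem:convex}) — produces only membership in $\overline{\Gamma}_i$, because an integral over a continuum of values of $X$ is a limit of convex combinations rather than a genuine one. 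The change-of-measure argument sidesteps this and delivers the sharper statement directly.
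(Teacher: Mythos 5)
Your proposal is correct, and it takes a genuinely different route from the paper. The paper keeps the original measure $P$ throughout: it introduces the size-biased variable $Y$ with law $P[Y\in B]=\expect{X1_{\{X\in B\}}}/\expect{X}$ and its change-of-variables identity (Lemma \ref{lem:skew}), uses the representation lemma to write $A_i=\theta(W,X,U)$ with $U\sim\mbox{Unif}[0,1]$ independent of $(W,X)$, defines the conditional-expectation function $f(x)=\expect{g_i(W,\theta(W,x,U))}\in\Gamma_i$, identifies $\expect{Xg_i(W,A_i)}/\expect{X}=\expect{f(Y)}$, and then needs Lemma \ref{lem:EX-in-A} to conclude that the expectation of a random vector valued in the bounded convex (but possibly non-closed, non-measurable) set $\Gamma_i$ stays in $\Gamma_i$. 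You instead tilt the entire underlying measure by the density $X/\expect{X}$, observe that independence of $X$ and $W$ leaves the law of $W$ unchanged under $\widetilde{P}$, and then invoke Lemma \ref{lem:in-gamma0} on the tilted space, which already delivers membership in $\Gamma_i$ itself; this bypasses Lemma \ref{lem:skew}, Lemma \ref{lem:EX-in-A}, and the explicit function $f$. The one place requiring explicit care is that Lemma \ref{lem:in-gamma0} is stated on the fixed space $(\Omega,\script{F},P)$, so you should note (as you essentially do) that its proof is measure-agnostic --- it uses only the law of $W$, the Borel structure of $\script{A}_i$, and the ability to extend the space by an independent uniform --- and that $\Gamma_i$ is a purely distributional object, depending only on the law of $W(0)$; with that remark the application under $\widetilde{P}$ is legitimate. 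Your closing observation is also on target: the naive route of conditioning on $X$ and averaging convex combinations only reaches $\overline{\Gamma}_i$, and that is precisely the difficulty the paper's Lemma \ref{lem:EX-in-A} is built to overcome, whereas your change of measure sidesteps it entirely.
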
 

\begin{proof} 
 If $\expect{X}=0$ then $X=0$ almost surely (recall $X$ is nonnegative) and so \eqref{eq:in-X-gamma} reduces to the trivially true statement $0 \in \{0\}$. Suppose $\expect{X}>0$. 
Let $Y$ be a random variable that has distribution 
$$P[Y \in B] = \frac{\expect{X1_{\{X\in B\}}}}{\expect{X}} \quad\forall B \in \script{B}(\mathbb{R})$$
For any positive integer $d$ and any bounded and measurable function $f:\mathbb{R}\rightarrow\mathbb{R}^d$ we have (see Lemma \ref{lem:skew}):
\begin{equation} \label{eq:Yav}
\expect{f(Y)} = \frac{\expect{Xf(X)}}{\expect{X}}
\end{equation} 
As in the previous lemma, without loss of generality assume there is a $V\sim \mbox{Unif}[0,1]$ that is independent of $(W, A_i, X)$ (else, extend the probability space). 
The representation result of Lemma \ref{lem:coupling} implies $A_i=\theta(W,X, U)$ for some $U\sim \mbox{Unif}[0,1]$ that is independent of $(W,X)$ and some function $\theta \in \script{C}_i$. Define the bounded and measurable function $f:\mathbb{R}\rightarrow\mathbb{R}^{|\script{I}|}$ by 
$$ f(x) = \expect{g_i(W, \theta(W, x, U))} \quad \forall x\geq 0$$
The definition of $\Gamma_i$ in \eqref{eq:gamma-def} implies $f(x)\in\Gamma_i$ for all $x\geq 0$. Since $X$ is independent of $(W,U)$ it holds that (almost surely)
$$ \expect{g_i(W,\theta(W,X,U))|X} = f(X)$$
Multiplying both sides of the above equality by $X$ and taking expectations gives
\begin{equation} \label{eq:andso}
\expect{Xg_i(W,\theta(W,X,U))} = \expect{Xf(X)} 
\end{equation} 
Since $\expect{X}>0$ and $A_i=\theta(W,X,U)$ we have 
\begin{align*}
\frac{\expect{Xg_i(W,A_i)}}{\expect{X}} &= \frac{\expect{Xg_i(W, \theta(W,X,U))}}{\expect{X}} \\
&\overset{(a)}{=}\frac{\expect{Xf(X)}}{\expect{X}} \\
&\overset{(b)}{=}\expect{f(Y)} 
\end{align*}
where (a) holds by \eqref{eq:andso}; (b) holds by \eqref{eq:Yav}.  We know $f(Y)$ is a random vector that takes values in the bounded and convex set $\Gamma_i$ surely, so $\expect{f(Y)} \in \Gamma_i$ (see Lemma \ref{lem:EX-in-A}), which  proves \eqref{eq:in-X-gamma}.
\end{proof} 

\subsection{Conditional expectations} 

Define: 
\begin{align*}
Y_{i,l}(t) &= c_{i,l}(W(t),A_i(t))\\
1_i(t) &= 1_{\{S(t)=i\}}\\
1_{i,j}(t) &= 1_{\{S(t)=i\}}1_{\{S(t+1)=j\}} 
\end{align*}
for $t \in\{0, 1, 2, \ldots\}$, $i,j \in \script{S}$, and $l \in \{0, \ldots, k\}$. 


\begin{lem} \label{lem:causal-in-gamma} Suppose $\{A(t)\}_{t=0}^{\infty}$ is a sequence of causal and measurable actions (so \eqref{eq:general-action} is satisfied).  For each $i \in \script{S}$ and $t \in \{0, 1, 2, \ldots\}$, there is a random vector $\gamma_i(t)\in \overline{\Gamma}_i$ such that, with probability 1, 
\begin{align}
&\expect{(1_i(t)Y_{i,l}(t); 1_{i,j}(t))_{(l,j)\in\script{I}}|H(t), S(t)} =1_{\{S(t)=i\}}\gamma_i(t)\label{eq:c-lem} 
\end{align}
\end{lem}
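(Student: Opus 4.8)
The plan is to exhibit $\gamma_i(t)$ explicitly as the expectation of $g_i$ under the ``frozen-history'' version of the contingency function $\alpha_{t,i}$, and then verify \eqref{eq:c-lem} componentwise by a conditional-expectation ``freezing'' argument that rests on one key independence fact. That fact: since $S(0)=s_0$ is deterministic and, by the recursion \eqref{eq:s-def} together with the causal form \eqref{eq:general-action}, each $S(\tau)$ is a measurable function of $(W(0),\ldots,W(\tau-1),U(0),\ldots,U(\tau-1),V(0),\ldots,V(\tau-1))$ (by induction on $\tau$), the pair $(H(t),S(t))$ is measurable with respect to $\sigma(W(0),\ldots,W(t-1),U(0),\ldots,U(t-1),V(0),\ldots,V(t-1))$. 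Because $\{W(t)\},\{U(t)\},\{V(t)\}$ are mutually independent sequences, the triple $(W(t),U(t),V(t))$ is independent of $\sigma(H(t),S(t))$. (This is elementary from the recursion; the representation lemma of Appendix C is not needed here.)

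Next I would construct $\gamma_i(t)$. For each history value $h\in D_H(t)$ set $\alpha_i^{(h)}(w,u)=\alpha_{t,i}(w,u,h)$; joint measurability of $\alpha_{t,i}$ gives $\alpha_i^{(h)}\in\script{C}_i$, and a Fubini-type argument shows $h\mapsto\expect{g_i(W,\alpha_i^{(h)}(W,U))}$ is measurable (here $W=W(0)$, $U=U(0)$). Hence $\gamma_i(t):=\expect{g_i(W,\alpha_i^{(H(t))}(W,U))}$ is a legitimate random vector, and by the definition \eqref{eq:gamma-def} of $\Gamma_i$ each of its realizations lies in $\Gamma_i\subseteq\overline{\Gamma}_i$, which gives the required membership (indeed the stronger $\gamma_i(t)\in\Gamma_i$).

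Then I would check \eqref{eq:c-lem}. The prefactor $1_{\{S(t)=i\}}$ is $\sigma(H(t),S(t))$-measurable and factors out of every component. For the cost components, $A_i(t)=\alpha_{t,i}(W(t),U(t),H(t))$ does not depend on $S(t)$, so the freezing lemma for conditional expectations---applied with the $\sigma(H(t),S(t))$-independent pair $(W(t),U(t))$ and the $\sigma(H(t),S(t))$-measurable parameter $H(t)$---gives $\expect{c_{i,l}(W(t),A_i(t))\mid H(t),S(t)}=\expect{c_{i,l}(W,\alpha_i^{(H(t))}(W,U))}$, the corresponding component of $\gamma_i(t)$. For the transition components, on $\{S(t)=i\}$ the recursion \eqref{eq:s-def} gives $S(t+1)=\psi\big([p_{i,m}(W(t),A_i(t))]_m,\,V(t)\big)$; conditioning first on $(H(t),S(t),W(t),U(t))$ and using the defining property of $\psi$ together with the independence of $V(t)$ from that $\sigma$-algebra yields $\expect{1_{\{S(t+1)=j\}}\mid H(t),S(t),W(t),U(t)}=p_{i,j}(W(t),A_i(t))$ on $\{S(t)=i\}$; taking iterated expectations and applying the freezing lemma once more produces the $p_{i,j}$-component of $\gamma_i(t)$. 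Assembling all components multiplied by $1_{\{S(t)=i\}}$ gives \eqref{eq:c-lem}.

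I expect the main technical obstacle to be the measure-theoretic bookkeeping rather than anything deep: namely, establishing the independence of $(W(t),U(t),V(t))$ from $\sigma(H(t),S(t))$ cleanly at the outset, justifying the two applications of the freezing lemma (which need joint measurability of the maps involved), and verifying that $h\mapsto\gamma_i(t)$ is measurable so that the right-hand side of \eqref{eq:c-lem} is a genuine random vector. Each of these is routine but must be sequenced carefully, with the independence fact proven first.
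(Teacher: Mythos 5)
Your proposal is correct, but it takes a genuinely different route from the paper. You construct $\gamma_i(t)$ explicitly: using the causal form \eqref{eq:general-action} you freeze the history, note that $\alpha_{t,i}(\cdot,\cdot,h)\in\script{C}_i$ for each fixed $h$, and set $\gamma_i(t)=\expect{g_i(W,\alpha_{t,i}(W,U,h))}\big|_{h=H(t)}$; the identity \eqref{eq:c-lem} then follows from the independence of $(W(t),U(t),V(t))$ from $\sigma(H(t),S(t))$ (which, as you say, is elementary from the recursion \eqref{eq:s-def} and mutual independence of the three driving sequences), one application of the freezing lemma for $V(t)$ to turn $1_{\{S(t+1)=j\}}$ into $p_{i,j}(W(t),A_i(t))$ on $\{S(t)=i\}$, and a second application for $(W(t),U(t))$; Fubini--Tonelli gives measurability of the frozen expectation, and \eqref{eq:gamma-def} gives the (stronger) membership $\gamma_i(t)\in\Gamma_i$. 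The paper instead never identifies the conditional expectation: it takes an arbitrary version $Z=\expect{g_i(W(t),A_i(t))\mid H(t),S(t)}$, writes $\overline{\Gamma}_i$ as a countable intersection of closed half-spaces (constructibility of compact convex sets), assumes a half-space constraint is violated on a positive-probability subset of $\{S(t)=i\}$, and derives a contradiction by invoking the representation lemma (Lemma~\ref{lem:coupling}) to write $A_i(t)=\beta(W(t),S(t),1_F,U)$ with $U$ independent of $(W(t),S(t),1_F)$, thereby producing a point of $\Gamma_i$ outside a half-space containing $\overline{\Gamma}_i$. Your argument is more constructive and avoids both the separation machinery and the coupling lemma, at the price of leaning directly on the functional form \eqref{eq:general-action} and the measurability bookkeeping you flag (which is indeed routine: sections of jointly measurable maps are measurable, and the freezing lemma needs no Borel or topological assumptions on $\script{W}$). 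The paper's argument is more abstract but only uses that $A_i(t)$ is a random element of the Borel space $\script{A}_i$ with $W(t)$ independent of the past, so it would survive even if the policy were given only as an abstract history-adapted random element rather than through an explicit measurable function of $(W(t),U(t),H(t))$.
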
 
\begin{proof} Fix $i \in \script{S}$ and $t \in \{0, 1, 2, \ldots\}$. 
Define
\begin{equation} \label{eq:Xi} 
X_i(t) = 1_i(t)(c_{i,l}(W(t), A_i(t)); 1_{\{S(t+1)=j\}})_{(l,j)\in\script{I}}
\end{equation} 
We want to show that with probability 1
\begin{equation} \label{eq:suffice0}
\expect{X_i(t)|H(t),S(t)} \in 1_{\{S(t)=i\}}\overline{\Gamma}_i
\end{equation} 
where the right-hand-side is the set that contains only the zero vector  if $S(t)\neq i$. 
For $i,j\in\script{S}$ we have (almost surely) 
$$ \expect{1_{\{S(t)=i\}}1_{\{S(t+1)=j\}}|H(t), S(t), W(t), A(t)} =  1_{\{S(t)=i\}}p_{i,j}(W(t),A_i(t))$$
The tower property of conditional expectations ensures (almost surely) 
$$\expect{1_{\{S(t)=i\}}1_{\{S(t+1)=j\}}|H(t),S(t)} = 1_{\{S(t)=i\}}\expect{p_{i,j}(W(t),A_i(t))|H(t),S(t)}$$
Therefore, with $g_i$ defined by \eqref{eq:g}, we have (almost surely) 
$$\expect{X_i(t)|H(t), S(t)} = 1_{\{S(t)=i\}}\expect{g_i(W(t), A_i(t))|H(t), S(t)}$$
Let $Z=\expect{g_i(W(t),A_i(t))|H(t),S(t)}$ be any version of the conditional expectation. 
Then (almost surely) 
$$\expect{X_i(t)|H(t),S(t)} = 1_{\{S(t)=i\}}Z$$ 
Substituting this into  \eqref{eq:suffice0}, it suffices to show that with probability 1
$$ 1_{\{S(t)=i\}}Z \in 1_{\{S(t)=i\}}\overline{\Gamma}_i$$
It suffices to show 
\begin{equation}\label{eq:sufficex}
P[\{S(t)\neq i\}\cup \{Z \in \overline{\Gamma}_i\}]=1 
\end{equation}

Since $\overline{\Gamma}_i$ is a compact and convex subset of $\mathbb{R}^{|\script{I}|}$,  it is  \emph{constructible}, meaning it is the intersection of a countable number of closed half-spaces \cite{constructible-book}: 
\begin{equation}\label{eq:constructible}
\overline{\Gamma}_i= \cap_{m=1}^{\infty} \{x \in \mathbb{R}^{|\script{I}|} : a_m^{\top}x \leq b_m\}
 \end{equation} 
for some vectors $a_m\in\mathbb{R}^{|\script{I}|}$ and scalars $b_m$ for $m \in \mathbb{N}$. Thus, 
$$ \{Z \in \overline{\Gamma}_i\} \iff \cap_{m=1}^{\infty} \{a_m^{\top} Z \leq b_m\}$$
To show \eqref{eq:sufficex}, it suffices to show 
\begin{equation} \label{eq:suffice-m}
P[\{S(t) \neq i\} \cup \{a_m^{\top}Z \leq b_m\}]=1 \quad \forall m \in \mathbb{N}
\end{equation} 
Indeed, if each event in a countable sequence of events has probability 1, their countable intersection also has  probability 1.  

Suppose \eqref{eq:suffice-m} fails (we reach a contradiction). Then there is a $m \in \mathbb{N}$ such that  
$$P[\{S(t)=i\}\cap \{a_m^{\top}Z > b_m\}]>0$$
By continuity of probability, there must be an $\epsilon>0$ such that 
$$P[\{S(t)=i\}\cap \{a_m^{\top}Z \geq  b_m+\epsilon\}]>0$$
Define the event
\begin{equation} \label{eq:F}
F = \{S(t)=i\}\cap \{a_m^{\top}Z\geq b_m+\epsilon\} 
\end{equation} 
and note that $P[F]>0$. 

Since $Z$ is a conditional expectation given $(H(t),S(t))$, it holds that $Z$ is a measurable function
of $(H(t), S(t))$, so $F$ is in the sigma algebra generated by $(H(t),S(t))$. Thus, the definition of conditional expectation implies 
$$ \expect{1_FZ} = \expect{1_Fg_i(W(t), A_i(t))}$$
By linearity of expectation 
$$ \expect{1_F(a_m^{\top}Z- b_m)} = a_m^{\top}\expect{1_Fg_i(W(t),A_i(t))}  -b_m\expect{1_F}$$
On the other hand, definition of $F$ gives $\{1_F=1\}\implies \{a_m^{\top}Z - b_m\geq \epsilon\}$ and so 
$$ \epsilon \expect{1_F} \leq a_m^{\top}\expect{1_Fg_i(W(t),A_i(t))}  -b_m\expect{1_F}$$

Applying the representation lemma (Lemma \ref{lem:coupling} of Appendix C) 
to represent $A_i(t)$ in terms of the 
random element $(W(t), S(t), 1_F)$ (using the existence of the independent 
$V(-1)\sim \mbox{Unif}[0,1]$ to enable the representation) 
 yields 
$$ A_i(t) = \beta(W(t),S(t), 1_F, U)$$
for some measurable function $\beta$ and some 
random variable $U\sim \mbox{Unif}[0,1]$ that is independent of $(W(t),S(t),1_F)$. Since $W(t)$ is independent of $(S(t), 1_F)$, it holds that $(W(t),U)$ is independent of $(S(t),1_F)$.  Thus
\begin{align*}
\epsilon \expect{1_F} &\leq a_m^{\top}\expect{1_Fg_i(W(t),\beta(W(t),S(t),1_F,U))}  -b_m\expect{1_F}\nonumber\\
&= a_m^{\top}\expect{1_Fg_i(W(t),\beta(W(t),i,1,U))}  -b_m\expect{1_F}\nonumber\\
&= a_m^{\top}\expect{1_F}\expect{g_i(W(t),\beta(W(t),i,1,U))} - b_m\expect{1_F}
\end{align*}
where the first equality holds because the definition of $F$ in \eqref{eq:F} means $\{1_F=1\} \implies \{S(t)=i\}$; the second equality holds because $1_F$ is independent of $(W(t), U)$.  Dividing the above by $P[F]$ and using $\expect{1_F}=P[F]$ gives
\begin{equation} \label{eq:suby}
\epsilon \leq a_m^{\top}\expect{g_i(W(t),\beta(W(t),i,1,U))} - b_m
\end{equation} 
Define  $\alpha_i\in\script{C}_i$ by 
$$\alpha_i(w,u) = \beta(w,i,1,u) \quad \forall (w,u)\in\script{W}\times [0,1]$$
Define 
$$ y = \expect{g_i(W(t),\alpha_i(W(t),U))}$$
It follows that $y \in \Gamma_i$ by definition of $\Gamma_i$ in \eqref{eq:gamma-def}. Substituting $y$ into \eqref{eq:suby}  gives
$$\epsilon  \leq a_m^{\top}y-b_m$$
So \eqref{eq:constructible} implies $y \notin \overline{\Gamma}_i$, contradicting $y \in \Gamma_i$. This completes the proof. 
\end{proof} 

\subsection{Proof of Lemma \ref{lem:exist}}

Suppose the deterministic problem is feasible. Compactness of the set of decision variables that satisfy \eqref{eq:det4}-\eqref{eq:det5} and continuity of the functions in \eqref{eq:det1}-\eqref{eq:det3} together imply that at least one optimal solution exists.  Every optimal solution must satisfy \eqref{eq:equal-c0} and must have a resulting $(p_{i,j})$ that is a transition probability matrix with disjoint communicating classes $\script{S}^{(1)}, \ldots, \script{S}^{(m)}$ for some $m\in\{1, \ldots, n\}$.  It can be shown that there is $c \in \{1, \ldots, m\}$ such that all constraints remain satisfied, and equality \eqref{eq:equal-c0} is maintained, when the optimal solution is modified by changing $\pi$ to have support only over states in $\script{S}^{(c)}$. This completes the proof. 

\subsection{Proof of Theorem \ref{thm:converse}}

Suppose the stochastic problem \eqref{eq:p1}-\eqref{eq:p3} is feasible. 
We first show \eqref{eq:fatou} is directly implied by \eqref{eq:converse}. Since $|C_0(t)|\leq c_{max}$ surely, the process $C_0(t)+c_{max}$ is nonnegative and Fatou's lemma implies 
$$\expect{\liminf_{T\rightarrow\infty} \frac{1}{T}\sum_{t=0}^{T-1}(C_0(t)+c_{max})} \leq \liminf_{T\rightarrow\infty} \frac{1}{T}\sum_{t=0}^{T-1}\expect{C_0(t)+c_{max}}$$
Therefore
$$ \expect{\liminf_{T\rightarrow\infty} \frac{1}{T}\sum_{t=0}^{T-1}C_0(t)} \leq \liminf_{T\rightarrow\infty} \frac{1}{T}\sum_{t=0}^{T-1}\expect{C_0(t)}$$
If \eqref{eq:converse} holds then the left-hand-side of the above inequality is greater than or equal to $c_0^*$, which implies \eqref{eq:fatou}. 

It remains to show that \eqref{eq:converse} holds and that the deterministic problem is feasible. 
Since the stochastic problem \eqref{eq:p1}-\eqref{eq:p3}  is feasible, there are actions  
$\{A(t)\}_{t=0}^{\infty}$ of the form \eqref{eq:general-action}  that produce costs $\{C_l(t)\}_{t=0}^{\infty}$ that 
satisfy the constraints  \eqref{eq:p2}-\eqref{eq:p3} almost surely.  For these actions, let  $\tilde{\Omega}$ be the set of all outcomes $\omega \in \Omega$ such that for all $l \in \{1, \ldots, k\}$ and all $i \in \script{S}$:
\begin{align}
&\limsup_{T\rightarrow\infty}\frac{1}{T}\sum_{t=0}^{T-1}C_l(t) \leq 0\label{eq:sample1} \\
&\expect{X_i(t)|H(t),S(t)} = 1_{\{S(t)=i\}}\gamma_i(t) \label{eq:sample2} \\
&\lim_{T\rightarrow\infty} \frac{1}{T}\sum_{t=0}^{T-1}\left(X_i(t)-\expect{X_i(t)|H(t),S(t)}\right)=0\label{eq:sample3}
\end{align}
for some $\gamma_i(t)\in \overline{\Gamma}_i$, where $X_i(t)$ is defined in \eqref{eq:Xi}. We know \eqref{eq:sample1} holds with probability 1; Lemma \ref{lem:causal-in-gamma} ensures \eqref{eq:sample2} holds with probability 1; Lemma \ref{lem:LLN} (Appendix C) ensures \eqref{eq:sample3} holds with probability 1. Thus, $P[\tilde{\Omega}]=1$.

For the rest of the proof we consider the sample path for a particular outcome $\omega \in \tilde{\Omega}$, so that \eqref{eq:sample1}-\eqref{eq:sample3} hold. For simplicity of notation we continue to write values such as $C_l(t)$, rather than $C_l(t)(\omega)$.  For this $\omega \in \tilde{\Omega}$ define 
\begin{equation} \label{eq:c0}
c_0 = \liminf_{T\rightarrow\infty}\frac{1}{T}\sum_{t=0}^{T-1}C_0(t)
\end{equation} 
All sample path values are bounded, so there is a subsequence $\{T_m\}_{m=1}^{\infty}$ of increasing positive integers such that
\begin{align*}
&\lim_{m\rightarrow\infty} \frac{1}{T_m}\sum_{t=0}^{T_m-1}C_l(t) = c_l \quad \forall l \in \{0, \ldots, k\}\\
&\lim_{m\rightarrow\infty}\frac{1}{T_m}\sum_{t=0}^{T_m-1}1_{\{S(t)=i\}}\gamma_i(t) = x_i \quad \forall i \in \script{S}\\
&\lim_{m\rightarrow\infty}\frac{1}{T_m}\sum_{t=0}^{T_m-1}1_{\{S(t)=i\}} = \pi_i\quad \forall i \in \script{S}
\end{align*}
for some $(\pi_1, \ldots, \pi_n)\in\script{P}$, some $x_i \in \mathbb{R}^{|\script{I}|}$ for $i \in \script{S}$,  some $c_i\leq 0$ for $i \in \{1, \ldots, k\}$, and $c_0$ defined by \eqref{eq:c0}.

We first claim that for $i\in\script{S}$
\begin{equation} \label{eq:claim1} 
x_i=\pi_i  ((c_{i,l});(p_{i,j}))
\end{equation} 
for some $((c_{i,l});(p_{i,j})) \in \overline{\Gamma}_i$.  If $\pi_i=0$ this holds trivially. Suppose $\pi_i>0$. Then for all sufficiently large $m$  
$$ \frac{\sum_{t=0}^{T_m-1}1_{\{S(t)=i\}}\gamma_i(t)}{\sum_{t=0}^{T_m-1}1_{\{S(t)=i\}}} \in \overline{\Gamma}_i$$
because the convex combination of points in $\overline{\Gamma}_i$ is again in $\overline{\Gamma}_i$. Multiplying both sides by $\pi_i$ and taking $m\rightarrow\infty$ yields $x_i \in \pi_i \overline{\Gamma}_i$ (note that $\pi_i\overline{\Gamma}_i$ is a compact set), which proves  \eqref{eq:claim1}. Then for each $l \in \{0, \ldots, k\}$ and $i,j\in \script{S}$ we have 
\begin{align*}
&c_l= \lim_{m\rightarrow\infty}\frac{1}{T_m}\sum_{t=0}^{T_m-1}\sum_{i\in\script{S}} 1_i(t)C_{i,l}(t) = \sum_{i\in\script{S}}\pi_ic_{i,l}\\
&\lim_{m\rightarrow\infty} \frac{1}{T_m}\sum_{t=0}^{T_m-1}1_{i,j}(t) = \pi_ip_{i,j}
\end{align*}
Since $c_l\leq 0$ for $l \in \{1, \ldots, k\}$, it follows that the $\pi_i$, $c_{i,l}$, $p_{i,j}$ variables satisfy the constraints 
\eqref{eq:det3},\eqref{eq:det4},\eqref{eq:det5}, and yield objective value $c_0$ in \eqref{eq:det1}.  To show the deterministic problem \eqref{eq:det1}-\eqref{eq:det5} is feasible and that  $c_0\geq c_0^*$, where $c_0^*$ is the minimum objective value for the problem, it suffices to show the $\pi_i, p_{i,j}$ satisfy \eqref{eq:det2}.

Fix $j \in \script{S}$. The number of times $S(t)=j$ is within 1 of the number of times we transition into $j$, so that 
\begin{equation} \label{eq:within1} 
\left|\sum_{t=0}^{T_m} 1_{\{S(t)=j\}} - \sum_{t=0}^{T_m-1}\sum_{i\in\script{S}} 1_{i,j}(t)\right|  \leq 1 \quad \forall m \in \{1, 2, 3, \ldots\}
\end{equation} 
Taking $m\rightarrow\infty$ gives 
\begin{equation} \label{eq:stat}
\pi_j=\sum_{i\in\script{S}} \pi_ip_{i,j}
\end{equation} 
so that  \eqref{eq:det2} holds. Overall, $\tilde{\Omega}$ is a set that satisfies $P[\tilde{\Omega}]=1$, and each outcome $\omega \in \tilde{\Omega}$ yields a sample path that satisfies
$$ c_0^*\leq c_0=\liminf_{T\rightarrow\infty}\frac{1}{T}\sum_{t=0}^{T-1}C_0(t)$$
which completes the proof. 

\subsection{Proof of Theorem \ref{thm:achievability}} 

If the deterministic problem \eqref{eq:det1}-\eqref{eq:det5} 
is feasible, Lemma \ref{lem:exist} ensures there is an optimal solution 
$((\pi_i); (c_{i,l}); (p_{i,j}))$ for which there is a nonempty set  $\script{S}'\subseteq\script{S}$ such that $\pi_i=0$ if $i \notin \script{S}'$, and for which the submatrix 
$(p_{i,j})_{i,j\in\script{S}'}$ is irreducible over $\script{S}'$.  By definition of an optimal solution to \eqref{eq:det1}-\eqref{eq:det5}  we have 
\begin{align}
&\sum_{i \in \script{S}} \pi_ic_{i,0} = c_0^*\label{eq:also1}\\
&\pi_j=\sum_{i\in\script{S}} \pi_ip_{i,j}  \quad \forall j \in \script{S}\label{eq:also2}\\
&\sum_{i \in \script{S}} \pi_ic_{i,l} \leq 0 \quad \forall l \in \{1, \ldots, k\} \label{eq:also3} \\
&\pi \in \script{P} \quad , \quad ((c_{i,l}); (p_{i,j})) \in \overline{\Gamma}_i \quad \forall i \in \script{S}\label{eq:also4} 
\end{align}

Define $W=W(0)$ and $U=U(0)$. Fix $i \in \script{S}$. The set $\Gamma_i$ is closed, and so $\overline{\Gamma}_i=\Gamma_i$ and  \eqref{eq:also4} implies 
$$((c_{i,l}); (p_{i,j}))_{(l,j)\in\script{I}} \in \Gamma_i$$
By definition of $\Gamma_i$ in \eqref{eq:gamma-def}, there is a function $\alpha_i \in \script{C}_i$ that satisfies 
$$ \expect{g_i(W,\alpha_i(W,U))} = ((c_{i,l}); (p_{i,j}))_{(j,l)\in\script{I}} $$
where $g_i$ is defined in \eqref{eq:g}. 
Fix $s_0\in\script{S}'$ and define $S(0)=s_0$ surely. Use the memoryless actions $A_i(t)=\alpha_i(W(t),U(t))$ for $i \in \script{S}$ and $t \in \{0, 1, 2, \ldots\}$. 
Define $X_i(t)$ for $i \in \script{S}$ by 
$$ X_i(t) = g_i(W(t), A_i(t))\quad \forall t \in \{0, 1, 2, \ldots\}$$
Then for each $i \in \script{S}$,  $\{X_i(t)\}_{t=0}^{\infty}$ are i.i.d. random vectors. 
Moreover, these vectors depend only on $\{(W(t), U(t))\}_{t=0}^{\infty}$, so they  are independent of $\{S(t)\}_{t=0}^{\infty}$. Also, each $X_i(t)$ is independent of history $H(t)$. Definition of $g_i$ in \eqref{eq:g} yields for all $t$
\begin{align*}
&\expect{C_l(t)|S(t)=i}=\expect{c_{i,l}(W(t),A_i(t))} = c_{i,l} \\
&P[S(t+1)=j|S(t)=i]=\expect{p_{i,j}(W(t),A_i(t))} = p_{i,j} 
\end{align*}
for all $i,j \in \script{S}$ and $l \in \{0, \ldots, k\}$. So $\{S(t)\}_{t=0}^{\infty}$ is a Markov chain with initial state $S(0)=s_0\in \script{S}'$ and transition probabilities $(p_{i,j})$. Its state is always in $\script{S}'$. The time average fraction of time in each state converges to the unique solution to \eqref{eq:also2} that corresponds to $\script{S}'$, which is the vector $(\pi_i)_{i\in\script{S}'}$ itself. Thus, the time average costs also satisfy \eqref{eq:also1}, \eqref{eq:also3}, 
completing the proof.

\section*{Appendix C -- Probability tools} 
\setcounter{subsection}{0}

\subsection{Representation of a random element that takes values on a Borel space} 

The following is a simple extension of a coupling result of Kallenberg that holds for any random element $X$ that takes values on a Borel space (see Proposition 5.13 in \cite{kallenberg}).  It ensures that for any other (possibly dependent) random element $S$, we can represent $X$ in terms of $S$ and an independent $\mbox{Unif}[0,1]$ random variable. 
The lemma extends the Kallenberg result from \emph{almost surely} to \emph{surely}.  While our proof uses elementary steps, we are unaware of a similar statement in the literature.  
The lemma assumes existence of a \emph{randomization variable} 
$V \sim \mbox{Unif}[0,1]$ that lives on the same probability space as $(X,S)$, but is independent of $(X, S)$. As described in \cite{kallenberg} (just before Theorem 5.10 there), existence of $V$ is a mild assumption that, if needed, can be guaranteed to hold by extending the probability space using standard product space concepts. 

\

\begin{lem} \label{lem:coupling} (Representation of a random element $X$) Fix a probability space $(\Omega, \script{F}, P)$. Suppose 
\begin{itemize} 
\item $X:\Omega\rightarrow D_X$ is a random element that takes values on some Borel space $(D_X,\script{G}_X)$.
\item  $S:\Omega\rightarrow D_S$ is a random element that takes values on some measurable space $(D_S, \script{G}_S)$ (not necessarily a Borel space). 
\item  $V:\Omega\rightarrow [0,1]$ is a $\mbox{Unif}[0,1]$ random variable that is independent of $(X,S)$.
\end{itemize} 
Then we surely have 
$$X=f(S,U)$$ 
for some measurable function $f:D_S\times [0,1]\rightarrow D_X$ and some random variable $U \sim \mbox{Unif}[0,1]$ that is independent of $S$.  In particular, $U$ is a measurable function of $(X,S,V)$. 
\end{lem}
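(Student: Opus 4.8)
The plan is to obtain the statement as a ``surely'' strengthening of Kallenberg's functional representation (Proposition~5.13 in \cite{kallenberg}), using the randomization variable $V$ to overwrite an exceptional null set. First I would apply the Kallenberg result with auxiliary randomization variable $V$: since $(D_X,\script{G}_X)$ is a Borel space, it produces a measurable function $\tilde f:D_S\times[0,1]\rightarrow D_X$ and a random variable $\tilde U\sim\mbox{Unif}[0,1]$ such that $\tilde U$ is independent of $S$, $\tilde U$ is a measurable function of $(X,S,V)$, and $X=\tilde f(S,\tilde U)$ almost surely. Let $N=\{\omega:X(\omega)\neq\tilde f(S(\omega),\tilde U(\omega))\}$. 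This is an event, since the diagonal of $D_X\times D_X$ lies in the product $\sigma$-algebra (a Borel space is countably generated and point-separating), and $P[N]=0$. The only defect is that $N$ need not be empty.

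Next I would install a ``backup channel'' supported on a Lebesgue-null set. Let $K\subseteq[0,1]$ be the Cantor set: it is Borel with $\mathrm{Leb}(K)=0$, and since $K$ is an uncountable Borel set and $D_X$ is a (nonempty) Borel space, there are measurable maps $\psi:K\rightarrow D_X$ and $\psi^{-1}:D_X\rightarrow K$ with $\psi(\psi^{-1}(x))=x$ for all $x\in D_X$. Define $f:D_S\times[0,1]\rightarrow D_X$ by
\[
f(s,u)=\begin{cases}\tilde f(s,u)& u\notin K,\\[2pt] \psi(u)& u\in K,\end{cases}
\]
which is measurable into $D_X$ because $K$ is Borel and $f$ is measurable on each of $D_S\times K$ and $D_S\times ([0,1]\setminus K)$. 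Then set
\[
E=\{\tilde U\notin K\}\cap N^{c},\qquad U=1_E\,\tilde U+1_{E^{c}}\,\psi^{-1}(X).
\]

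It remains to verify the three conclusions, which I expect to be routine bookkeeping. On $E$ we have $U=\tilde U\notin K$, so $f(S,U)=\tilde f(S,\tilde U)=X$; on $E^{c}$ we have $U=\psi^{-1}(X)\in K$, so $f(S,U)=\psi(\psi^{-1}(X))=X$; hence $X=f(S,U)$ \emph{surely}. Since $P[E^{c}]\leq P[\tilde U\in K]+P[N]=\mathrm{Leb}(K)+0=0$, we have $U=\tilde U$ almost surely, so $U\sim\mbox{Unif}[0,1]$ and the joint law of $(U,S)$ equals that of $(\tilde U,S)$, which is a product law; thus $U$ is independent of $S$. Finally $U$ is a measurable function of $(X,S,\tilde U)$ and hence of $(X,S,V)$, because $\tilde U$ is.

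I expect the main obstacle to be the single delicate point on which the whole argument turns: repairing $U$ on the probability-zero event $N$ without disturbing its uniform law or its independence from $S$. Encoding the entire target space $D_X$ inside the Lebesgue-null Cantor set $K$ is precisely what makes this correction free of charge, since altering $U$ only on the null event $\{\tilde U\in K\}\cup N$ changes no distribution. The secondary technical point is checking that $N$ is an event, i.e.\ that the diagonal of $D_X\times D_X$ is product-measurable; this, together with Kallenberg's step and the construction of $\psi$, is where the Borel-space hypothesis on $D_X$ is genuinely used.
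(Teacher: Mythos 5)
Your proposal is correct and follows essentially the same route as the paper's own proof: apply Kallenberg's almost-sure representation and then repair the exceptional null set by encoding $D_X$ into a Lebesgue-null Borel subset of $[0,1]$ (a Cantor set), redefining $U$ there via the encoding and $f$ via the decoding, so that the correction costs nothing distributionally. The only cosmetic difference is that you use an injection of $D_X$ into $K$ with a measurable left inverse (handling countable and uncountable $D_X$ uniformly), whereas the paper splits into cases and uses a Borel isomorphism onto a measure-zero set $C$; you also make explicit the measurability of the exceptional set $N$, which the paper leaves implicit.
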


\

\begin{proof}  
Proposition 5.13 in \cite{kallenberg} shows $X=g(S,Y)$ \emph{almost surely} for some measurable function $g:D_S\times [0,1]\rightarrow D_X$ and some random variable $Y \sim \mbox{Unif}[0,1]$ that is independent of $S$ and that is a measurable function of $(X,S,V)$. 
Here we extend this to surely. Recall that an \emph{isomorphism} between two measurable spaces $(A_1, \script{F}_1)$ and $(A_2, \script{F}_2)$ is a measurable bijective function $\psi:A_1\rightarrow A_2$ with a measurable inverse. 
We first argue there is a Borel measurable set $C\subseteq [0,1]$ that has Borel measure 0, and an isomorphism $\psi:D_X\rightarrow C$. To see this, first suppose $D_X$ is uncountably infinite. Let $C$ be any uncountably infinite Borel measurable subset of $[0,1]$ that has Borel measure $0$ (such as a Cantor set). Theorem 3.3.13 in \cite{srivastava-borel} ensures there is an isomorphism between any two uncountably infinite Borel spaces, so the desired $\psi:D_X\rightarrow C$ exists. In the opposite case when $D_X$ is  finite or countably infinite,  it is easy to construct a Borel measurable subset $C \subseteq [0,1]$ with the same cardinality as $D_X$, so  $C$ has Borel measure zero and the desired  $\psi$ again exists.

With the measure-zero set $C\subseteq [0,1]$ and the isomorphism $\psi:D_X\rightarrow C$ in hand, define  random variable $U:\Omega\rightarrow[0,1]$ by 
$$ U=\left\{\begin{array}{cc}
Y & \mbox{ if $Y \notin C$ and $X= g(S,Y)$} \\
\psi(X) & \mbox{ else} 
\end{array}\right.$$
Since $P[Y \notin C]=1$ and $P[X=g(S,Y)]=1$, it holds that $P[U=Y]=1$. Therefore, $U$ has the same distributional properties as $Y$, specifically, $U \sim \mbox{Unif}[0,1]$ and $U$ is independent of $S$. By definition of $U$ we have 
\begin{align}
&\{U \in C\} \implies \{U=\psi(X)\}\label{eq:iff1}\\ 
&\{U \notin C\} \implies \{U=Y\} \cap \{X=g(S,Y)\}\label{eq:iff2} 
\end{align}
Define the measurable function $f:D_S\times [0,1]\rightarrow D_X$ by 
$$ f(s,u) = \left\{\begin{array}{cc}
g(s,u) & \mbox{if $u \notin C$} \\
\psi^{-1}(u) &\mbox{if $u \in C$} 
\end{array}\right.$$ 
To show $X=f(S,U)$ surely, observe that if $U\in C$ then by definition of $f$: 
$$ f(S,U)=\psi^{-1}(U) \overset{(a)}{=} \psi^{-1}(\psi(X)) = X$$
where equality (a) holds by \eqref{eq:iff1}.  On the other hand, if $U\notin C$ then by definition of $f$: 
$$ f(S,U)= g(S,U) \overset{(b)}{=}g(S,Y) \overset{(c)}{=} X$$
where equalities (b) and (c) hold because \eqref{eq:iff2} implies $U=Y$ and $X=g(S,Y)$. 
\end{proof} 

\subsection{Variation on the law of large numbers} 


Let  $(\Omega, \script{F}, P)$ be a probability space.  Fix $d\in\mathbb{N}$ and define $\norm{x}$ as the standard Euclidean norm in $\mathbb{R}^d$, so 
$$ \norm{x}^2 = \sum_{i=1}^d x_i^2 \quad \forall x \in \mathbb{R}^d$$
The following lemma follows directly from a result in  \cite{chow-lln}.  It concerns general random vectors $\{X_n\}_{n=1}^{\infty}$, possibly dependent and having different distributions. While it uses a filtration $\{\script{F}_n\}_{n=1}^{\infty}$ with certain properties, an important example to keep in mind is $\script{F}_1=\{\Omega, \phi\}$ and $\script{F}_n =\sigma(X_1, \ldots, X_{n-1})$ for $n \geq 2$.

\

\begin{lem} \label{lem:LLN} 
Let $\{X_n\}_{n=1}^{\infty}$ be  random vectors that take values in $\mathbb{R}^d$. Let $\{\script{F}_n\}_{n=1}^{\infty}$ be a sequence of sigma algebras on $\Omega$ such that $\script{F}_n\subseteq\script{F}_{n+1}$ for all $n \in \mathbb{N}$. Suppose that:
\begin{itemize} 
\item $(X_1, \ldots, X_{n-1})$ is $\script{F}_n$-measurable for all $n \geq 2$.
\item $\sum_{n=1}^{\infty} \frac{\expect{\norm{X_n}^2}}{n^2}<\infty$  
\end{itemize} 
Then
$$\lim_{n\rightarrow\infty} \frac{1}{n}\sum_{i=1}^n\left(X_i- \expect{X_i|\script{F}_i}\right) = 0 \quad \mbox{(almost surely)} $$
\end{lem}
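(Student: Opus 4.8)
The plan is to reduce the statement to one dimension and then recognize a martingale structure to which a standard $L^2$ argument (equivalently, the strong law for martingales in \cite{chow-lln}) applies. First observe that $\sum_n \expect{\norm{X_n}^2}/n^2<\infty$ forces each term to be finite, so every $X_n\in L^2$ and the conditional expectations $\expect{X_n|\script{F}_n}$ are well defined. Writing $X_n=(X_n^{(1)},\dots,X_n^{(d)})$, we have $\expect{(X_n^{(j)})^2}\le \expect{\norm{X_n}^2}$, hence $\sum_n \expect{(X_n^{(j)})^2}/n^2<\infty$ for each coordinate $j$. Since the claimed almost sure limit is the vector whose $j$th coordinate is $\frac{1}{n}\sum_{i=1}^n\big(X_i^{(j)}-\expect{X_i^{(j)}|\script{F}_i}\big)$, and a finite intersection of almost sure events is almost sure, it suffices to treat the scalar case $d=1$.

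So fix $d=1$ and set $Y_i=X_i-\expect{X_i|\script{F}_i}$. Because $(X_1,\dots,X_i)$ is $\script{F}_{i+1}$-measurable and $\expect{X_i|\script{F}_i}$ is $\script{F}_i$-measurable, $Y_i$ is $\script{F}_{i+1}$-measurable, while by construction $\expect{Y_i|\script{F}_i}=0$. Consequently the weighted partial sums $M_n=\sum_{i=1}^n Y_i/i$ form a martingale with respect to the \emph{shifted} filtration $\{\script{F}_{n+1}\}_{n\ge 1}$: $M_n$ is $\script{F}_{n+1}$-measurable and $\expect{M_{n+1}-M_n\mid \script{F}_{n+1}}=\tfrac{1}{n+1}\expect{Y_{n+1}\mid\script{F}_{n+1}}=0$. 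For $i<j$, $Y_i$ is $\script{F}_j$-measurable and $\expect{Y_j|\script{F}_j}=0$, so the increments $Y_i/i$ are pairwise orthogonal in $L^2$; therefore $\expect{M_n^2}=\sum_{i=1}^n \expect{Y_i^2}/i^2$. Using $\expect{Y_i^2}=\expect{X_i^2}-\expect{\left(\expect{X_i|\script{F}_i}\right)^2}\le \expect{X_i^2}$, we get $\sup_n \expect{M_n^2}\le \sum_{i=1}^{\infty}\expect{X_i^2}/i^2<\infty$.

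Thus $M_n$ is an $L^2$-bounded martingale, so by Doob's martingale convergence theorem it converges almost surely to a finite limit; equivalently, $\sum_{i=1}^{\infty}Y_i/i$ converges almost surely. Applying Kronecker's lemma with the increasing sequence $a_i=i\uparrow\infty$ then gives $\frac{1}{n}\sum_{i=1}^n Y_i\to 0$ almost surely, which is exactly the scalar conclusion; combining the finitely many coordinatewise statements completes the proof. (If one prefers, the passage from the summability hypothesis to $\frac{1}{n}\sum_{i=1}^n Y_i\to 0$ is precisely the martingale strong law recorded in \cite{chow-lln}, applied to $S_n=\sum_{i=1}^n Y_i$ with normalizing sequence $b_n=n$.)

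The steps are essentially bookkeeping; the only points that need care are that the relevant martingale must be taken with respect to $\{\script{F}_{n+1}\}$ rather than $\{\script{F}_n\}$ (since $Y_i$ is in general only $\script{F}_{i+1}$-measurable), the integrability of the conditional expectations, and the orthogonality of the increments. I do not expect a genuine obstacle here: the substantive content is carried by the cited $L^2$/martingale strong law, and this lemma is just its vector-valued packaging.
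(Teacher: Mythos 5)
Your proposal is correct and follows essentially the same route as the paper: define $Y_i=X_i-\expect{X_i|\script{F}_i}$, observe that $\expect{Y_i|\script{F}_i}=0$ so $\{Y_i\}$ is a martingale difference sequence (with respect to the shifted filtration), and invoke the martingale strong law of \cite{chow-lln}. The only difference is that you additionally write out a self-contained proof of that cited law (coordinatewise reduction, $L^2$ orthogonality of $Y_i/i$, Doob's $L^2$ martingale convergence, Kronecker's lemma), which is accurate but not a distinct approach.
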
 

\

\begin{proof} 
This follows directly from the law of large numbers for martingale differences in \cite{chow-lln} by defining $Y_i=X_i-\expect{X_i|\script{F}_i}$ and observing that $\expect{Y_i|\script{F}_i}=0$ for all $i\in\mathbb{N}$.
\end{proof} 

\subsection{Finite expectations cannot leave convex sets} 

Let  $(\Omega, \script{F}, P)$ be a probability space. Fix $n$ as a positive integer. We say random vector $X:\Omega\rightarrow\mathbb{R}^n$ has \emph{finite expectation} (equivalently, $\expect{X}$ is finite),  if and only if $\expect{X} \in \mathbb{R}^n$, where $\expect{X}=(\expect{X_1}, \ldots, \expect{X_n})$. The following lemma is generally accepted as true, although we cannot find a complete proof in the literature.  We provide our own proof below.  The challenge is that the convex set $A\subseteq\mathbb{R}^n$ is arbitrary and is not necessarily closed. In fact, $A$ is not necessarily Borel measurable, 
although we assume $\{X \in A\}$ is an event that has probability 1 (an example is when $X \in A$ surely).\footnote{If $X:\Omega \rightarrow\mathbb{R}^n$ is a random vector and $A$ is a Borel measurable subset of $\mathbb{R}^n$ then $\{X \in A\}$ is \emph{always} an event. However, if $A$ is not Borel measurable then $\{X \in A\}$ may or may not be an event. An example convex set $A\subseteq \mathbb{R}^2$ that is not Borel measurable is $A = \{(x,y)\in \mathbb{R}^2: x^2+y^2<1\} \cup S$ where $S$ is any nonBorel subset of the unit circle $\{(x,y)\in \mathbb{R}^2: x^2+y^2=1\}$. Such a set $S$ exists under the axiom of choice. A trivial example random vector $X:\Omega\rightarrow\mathbb{R}^2$ for which $\{X \in A\}$ is an event is the always-zero random vector, so $\{X \in A\} = \{X=(0,0)\}=\Omega$.}

\

\begin{lem} \label{lem:EX-in-A} Let $A\subseteq\mathbb{R}^n$ be a convex set. Let $X:\Omega\rightarrow\mathbb{R}^n$ be a random vector such that $\{X \in A\}$ is an event and 
$P[X \in A]=1$. If $\expect{X}$ is finite then $\expect{X} \in A$. 
\end{lem}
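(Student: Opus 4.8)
The plan is to reduce the claim to the classical fact that the expectation of a random vector supported on a \emph{closed} convex set lies in that set, and then handle the possible non-closedness of $A$ by a dimension/relative-interior argument. First, let $K = \overline{A}$, the closure of $A$. Since $A$ is convex, $K$ is a closed convex set, and $P[X \in K] = 1$. The standard result (provable by a separating-hyperplane argument: if $\expect{X} \notin K$ there is a vector $a$ and scalar $b$ with $a^\top \expect{X} > b \geq a^\top x$ for all $x \in K$, hence $a^\top X \leq b$ almost surely, hence $a^\top \expect{X} \leq b$, a contradiction) gives $\expect{X} \in K$. So it remains to upgrade membership in $\overline{A}$ to membership in $A$ itself.

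For this, I would work in the affine hull of $A$. Let $V$ be the affine hull of $A$; it is an affine subspace of $\mathbb{R}^n$, say of dimension $m \le n$. Since $A \subseteq V$ and $V$ is closed, $\overline{A} \subseteq V$ as well, and in particular $\expect{X} \in V$ (which also follows from the previous paragraph). Relative to $V$, the convex set $A$ has nonempty relative interior $\mathrm{ri}(A)$, and a standard fact of convex geometry states that for any $x_0 \in \mathrm{ri}(A)$ and any $y \in \overline{A}$, the half-open segment $[x_0, y) = \{(1-\theta)x_0 + \theta y : \theta \in [0,1)\}$ lies entirely in $A$ (indeed in $\mathrm{ri}(A)$). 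So the strategy is: fix some $x_0 \in \mathrm{ri}(A)$, and show that $\expect{X}$ cannot lie on the ``relative boundary'' $\overline{A}\setminus A$ unless it is already forced into $A$ — more precisely, show that if $\expect{X} \in \overline{A}\setminus\mathrm{ri}(A)$, then $X$ must lie almost surely in the supporting face, and recurse on that face (which is a convex set of strictly smaller dimension on which the hypotheses still hold, with $\{X \in F\}$ still an event of probability one). The recursion terminates because dimension strictly decreases, and in the base case $A$ is a single point, where the statement is trivial.

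The key step making the recursion work is the following: if $\expect{X} \in \overline{A}$ but $\expect{X} \notin \mathrm{ri}(A)$, there is a supporting hyperplane $H = \{x : a^\top x = b\}$ of $\overline A$ (relative to $V$) containing $\expect{X}$, with $a^\top x \le b$ for all $x \in \overline{A}$; then $a^\top X \le b$ almost surely and $\expect{a^\top X} = a^\top \expect{X} = b$, so $a^\top X = b$ almost surely, i.e. $P[X \in \overline{A}\cap H] = 1$. Setting $A' = A \cap H$ (a convex set of dimension $< m$ with affine hull inside $H$), we have $\{X \in A'\}$ is an event of probability one — here we use that $\{X \in A\}$ is an event and $\{a^\top X = b\}$ is an event, so their intersection is — and $\expect{X} \in \overline{A'} = \overline{A}\cap H$; apply the inductive hypothesis to $A'$ to conclude $\expect{X} \in A' \subseteq A$. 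I expect the main obstacle to be purely bookkeeping around measurability: one must check at each recursive stage that $\{X \in A'\}$ remains an event (not automatic since $A'$ need not be Borel), which works because intersecting with the Borel event $\{a^\top X = b\}$ preserves the ``is an event'' property, and that the relative-interior / supporting-hyperplane facts are applied in the correct ambient affine subspace $V$ rather than in $\mathbb{R}^n$. The convex-geometry facts themselves (nonemptiness of $\mathrm{ri}(A)$, the segment property, existence of a supporting hyperplane through a relative-boundary point) are standard and can be cited.
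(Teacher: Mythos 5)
Your proof is correct, but it takes a genuinely different route from the paper's. You first settle the closed case ($\expect{X}\in\overline{A}$ by strict separation), and then peel off the relative boundary: whenever $\expect{X}\notin\mathrm{ri}(A)$ you take a proper supporting hyperplane $H=\{x:a^{\top}x=b\}$ at $\expect{X}$, observe that $b-a^{\top}X$ is a nonnegative random variable with zero mean, hence $P[X\in A\cap H]=1$, and recurse on the strictly lower-dimensional convex set $A\cap H$, preserving measurability because $\{X\in A\cap H\}=\{X\in A\}\cap\{a^{\top}X=b\}$ is an intersection of events. The paper instead argues by contradiction: after normalizing $\expect{X}=0\notin A$, it picks a linear subspace $L$ of \emph{minimal} dimension with $P[X\in L]=1$, maps $A\cap L$ into $\mathbb{R}^d$ by a linear bijection, separates the origin from the image, and concludes that $X$ concentrates on a subspace of dimension $d-1$, contradicting minimality; the non-measurability of $A\cap L$ is sidestepped by replacing $X$ with a truncated $\tilde{X}$ lying in $A\cap L$ surely. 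Both arguments are dimension reductions driven by hyperplane separation plus the zero-mean-nonnegative trick; yours is a direct induction on the dimension of the affine hull but leans on heavier (standard, citable) convex-analysis facts — nonemptiness of the relative interior and proper separation (a supporting hyperplane at a relative-boundary point that does not contain all of $A$, which is exactly what makes the dimension strictly decrease) — whereas the paper's minimality device needs only the basic separation theorem and a linear bijection $h:L\rightarrow\mathbb{R}^d$, so it is somewhat more self-contained. One small correction: your asserted identity $\overline{A\cap H}=\overline{A}\cap H$ is false in general (only $\subseteq$ holds; e.g. for $A=(0,1)^2\cup\{(x,0):0<x<\tfrac12\}$ and $H=\{y=0\}$, the right side is the whole bottom edge while the left side is only $[0,\tfrac12]\times\{0\}$), but this is harmless: the inductive hypothesis applied to $A'=A\cap H$ needs only that $\{X\in A'\}$ is an event of probability one and that $\expect{X}$ is finite, not that $\expect{X}\in\overline{A'}$, so the recursion goes through unchanged.
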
 

\

\begin{proof} Suppose  $P[X \in A]=1$ and $\expect{X}$ is finite. 
Without loss of generality it suffices to assume $\expect{X}=0$ (else, define $\tilde{X}=X-\expect{X}$ and define the shifted set $\tilde{A}=A - \expect{X}$). We want to show $0 \in A$. Suppose $0\notin A$ (we reach a contradiction). Let $d\in\{0, 1, \ldots, n\}$ be the smallest integer for which there is a $d$-dimensional linear subspace $L\subseteq\mathbb{R}^n$ such that $P[X \in L]=1$ (note that $\mathbb{R}^n$ is itself a $n$-dimensional linear subspace such that $P[X \in \mathbb{R}^n]=1$, so $d\leq n$). 

First suppose $d=0$. Then $L=\{0\}$ and $P[X=0]=1$. By assumption, we also have $P[X \in A]=1$.  The two probability-1 events $\{X=0\}$ and $\{X \in A\}$ cannot be disjoint (since then the union would have probability 2). Thus, $0\in A$, a contradiction. 

Now suppose $d\geq 1$. Let $L$ be the corresponding $d$-dimensional subspace. Since $P[X \in L]=1$ and $P[X \in A]=1$, we know $\{X \in A \cap L\}$ is an event and $P[X \in A \cap L]=1$. Fix $v\in A\cap L$ and define a new random vector 
$$\tilde{X}= X 1_{\{X \in A \cap L\}} + v1_{\{X \notin A \cap L\}}$$
where $1_F$ is an indicator function that is $1$ if event $F$ is true, and $0$ else. 
Then $\tilde{X} \in A \cap L$ surely and $P[X=\tilde{X}]=1$, so $\expect{\tilde{X}}=\expect{X}=0$. 
Let $h:L\rightarrow\mathbb{R}^d$ be a linear bijection. In particular, $h(0)=0$. Since $A \cap L$ is a convex subset of $\mathbb{R}^n$, $h(A \cap L)$ is a convex subset of $\mathbb{R}^d$.  Since $h$ is a linear bijection and $0 \notin A$,  we know $0\notin h(A \cap L)$.  The hyperplane separation theorem ensures there is a separation between $0$ and the convex set $h(A\cap L)$, so there is a nonzero vector $a\in\mathbb{R}^d$ such that 
$$0\leq a^{\top}y \quad \forall y \in h(A \cap L)$$
Since $\tilde{X} \in A \cap L$ we know $h(\tilde{X}) \in h(A \cap L)$ and so 
$$ 0\leq a^{\top}h(\tilde{X})$$ 
On the other hand, linearity of $h$ implies 
$$\expect{a^{\top}h(\tilde{X})} = a^{\top}h\left(\expect{\tilde{X}}\right) = h(0)=0$$
Then $a^{\top}h(\tilde{X})$ is a nonnegative random variable with expectation zero, so 
\begin{equation} \label{eq:zero} 
P[a^{\top}h(\tilde{X})=0]=1
\end{equation} 
Recall $d\geq 1$. Define the $d-1$ dimensional linear subspace $C=\{y \in \mathbb{R}^d : a^{\top}y=0\}$.  
Then \eqref{eq:zero} means 
$$P[h(\tilde{X}) \in C] = 1$$
Therefore $P[\tilde{X} \in h^{-1}(C)]=1$. 
Since $X=\tilde{X}$ almost surely, we have 
$P[X \in h^{-1}(C)]=1$. Since $C$ is a $d-1$ dimensional subspace of $\mathbb{R}^d$ and $h:L\rightarrow\mathbb{R}^d$ is bijective, $h^{-1}(C)$ is a $d-1$ dimensional linear subspace of $\mathbb{R}^n$ such that $P[X \in h^{-1}(C)]=1$,
contradicting the definition of $d$ being the smallest integer for which this can hold. 
\end{proof} 

\

The assumption that $\expect{X}$ is finite is crucial. As a counter-example, let $X:\Omega\rightarrow\mathbb{R}$ be any random variable with $\expect{X}=\infty$. Then $X$ is in the convex set $\mathbb{R}$ surely, but $\expect{X} \notin \mathbb{R}$.

\subsection{Skewed distributions} 

This subsection presents a lemma that falls under the genre of  ``change of measure.'' The result is generally known; we provide a proof for completeness. Proofs of related statements often use the Radon-Nikodym derivative. Our proof uses more elementary concepts.

Let  $(\Omega, \script{F}, P)$ be a probability space. Fix positive integers $n,k$. 
Let $X:\Omega\rightarrow \mathbb{R}^n$ be a random vector. Let $T:\Omega\rightarrow[0,\infty)$ be a nonnegative random variable (possibly dependent on $X$).  Assume $0<\expect{T}<\infty$.
Let $Y$ be a random vector that takes values in $\mathbb{R}^n$ with distribution 
\begin{equation} \label{eq:skewer}
P[Y \in B] = \frac{\expect{T 1_{\{X \in B\}}}}{\expect{T}} \quad \forall B \in \script{B}(\mathbb{R}^n)
\end{equation}
It can be shown that this is a valid distribution.\footnote{Specifically, the function $h:\script{B}(\mathbb{R}^n)\rightarrow [0,1]$ defined by 
$h(B)=\expect{T1_{\{ X \in B\}}}/\expect{T}$ for $B \in \script{B}(\mathbb{R}^n)$
satisfies the three axioms for a probability measure for the measurable space $(\mathbb{R}^n, \script{B}(\mathbb{R}^n))$. Indeed, $h(B)\geq 0$ for all $B \in \script{B}(\mathbb{R}^n)$; $h(\mathbb{R}^n)=1$; $h(\cup_{i=1}^{\infty} B_i)=\sum_{i=1}^{\infty} h(B_i)$ for disjoint sets $B_i \in \script{B}(\mathbb{R}^n)$. The identity random vector $W:\mathbb{R}^n\rightarrow\mathbb{R}^n$ defined by $W(\omega)=\omega$ for all $\omega \in \mathbb{R}^n$ has this distribution.} 

\

\begin{lem} \label{lem:skew} For any Borel measurable function $g:\mathbb{R}^n\rightarrow\mathbb{R}^k$ such that $\expect{Tg(X)}$ is finite, we have 
$$ \expect{g(Y)} = \frac{\expect{Tg(X)}}{\expect{T}}$$
\end{lem}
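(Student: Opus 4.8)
The plan is to run the standard ``bootstrapping'' argument of measure theory: verify the identity first for indicators, then extend it successively to nonnegative simple functions, to nonnegative Borel functions, and finally to general integrable functions, reducing the vector-valued case to the scalar case componentwise. No Radon--Nikodym machinery is needed, exactly as the lemma's preamble anticipates. Throughout, note that since $g$ is Borel measurable and $X,Y$ are random vectors, both $g(X)$ and $g(Y)$ are random vectors, so every expectation written below is at least well-defined in $[0,\infty]$ whenever $g\geq 0$.

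First I would treat $k=1$ and $g=1_B$ for an arbitrary $B\in\mathcal{B}(\mathbb{R}^n)$. Then $\expect{g(Y)}=P[Y\in B]$, and this equals $\expect{T1_{\{X\in B\}}}/\expect{T}=\expect{Tg(X)}/\expect{T}$ by the very definition \eqref{eq:skewer} of the distribution of $Y$. Since both sides are linear in $g$, linearity of expectation extends the identity to every nonnegative simple function $g=\sum_{i=1}^m a_i 1_{B_i}$ with $a_i\geq 0$ and $B_i\in\mathcal{B}(\mathbb{R}^n)$.

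Next, for a general nonnegative Borel measurable $g:\mathbb{R}^n\to[0,\infty)$, pick simple functions $0\leq g_1\leq g_2\leq\cdots$ with $g_m\uparrow g$ pointwise (the usual dyadic truncation). Then $g_m(Y)\uparrow g(Y)$ and $Tg_m(X)\uparrow Tg(X)$, so two applications of the monotone convergence theorem give $\expect{g(Y)}=\lim_m\expect{g_m(Y)}=\lim_m\expect{Tg_m(X)}/\expect{T}=\expect{Tg(X)}/\expect{T}$ as an identity in $[0,\infty]$. Applying this to $|g|$ shows that finiteness of $\expect{T|g(X)|}$ is equivalent to finiteness of $\expect{|g(Y)|}$, so the hypothesis that $\expect{Tg(X)}$ is finite (interpreted componentwise; in the scalar case this means $\expect{T|g(X)|}<\infty$) guarantees that $g(Y)$ is integrable. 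Finally, writing $g=g^+-g^-$ with $\expect{Tg^{\pm}(X)}<\infty$, subtract the two nonnegative identities to obtain $\expect{g(Y)}=\expect{Tg(X)}/\expect{T}$. For general $k$, apply the scalar result to each component $g_j$ and reassemble the vector.

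The argument is entirely routine; the only point requiring care is the bookkeeping of finiteness. One must establish the identity in $[0,\infty]$ for nonnegative $g$ \emph{before} splitting into positive and negative parts, and one must invoke the nonnegative identity applied to $|g|$ in order to know that $\expect{g(Y)}$ is itself finite, so that the subtraction $\expect{g^+(Y)}-\expect{g^-(Y)}$ is legitimate. Beyond this, there is no substantive obstacle.
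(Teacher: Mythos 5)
Your proof is correct, and it follows the same overall reduction as the paper---nonnegative $g$ first, then real-valued $g$ via $g=g^+-g^-$, then $\mathbb{R}^k$-valued $g$ componentwise---but the core nonnegative case is handled by a genuinely different computation. You run the standard machine: verify the identity for indicators directly from \eqref{eq:skewer}, extend by linearity to nonnegative simple functions, and pass to general nonnegative Borel $g$ by monotone convergence applied simultaneously to $g_m(Y)\uparrow g(Y)$ and $Tg_m(X)\uparrow Tg(X)$ (the latter using $T\geq 0$). The paper instead avoids approximating simple functions altogether: it writes $\expect{g(Y)}=\int_0^\infty P[g(Y)>t]\,dt$, applies \eqref{eq:skewer} to the sets $g^{-1}((t,\infty))$, and swaps the integral and expectation by Fubini--Tonelli, using $\int_0^\infty 1_{\{g(X)>t\}}\,dt=g(X)$. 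The two arguments are equally elementary and prove the same identity in $[0,\infty]$; the paper's layer-cake route is a single short computation, while your route makes the finiteness bookkeeping more explicit---in particular you spell out that applying the nonnegative identity to $|g|$ (equivalently to $g^\pm$) shows $\expect{T|g(X)|}<\infty$ forces $g(Y)$ to be integrable before the subtraction $\expect{g^+(Y)}-\expect{g^-(Y)}$, a point the paper's final paragraph passes over more quickly. Your reading of the hypothesis (componentwise, with finiteness of $\expect{Tg(X)}$ meaning $\expect{T|g(X)|}<\infty$ in the scalar case) matches the standard convention and the paper's intent, so there is no gap.
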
 

\

\begin{proof} 
First assume $g:\mathbb{R}^n\rightarrow[0, \infty)$ is a nonnegative function. 
Then $g(Y)$ is a nonnegative random variable and
\begin{align*}
\expect{g(Y)} &=\int_0^{\infty} P[g(Y)>t]dt\\
&=\int_0^{\infty} P[Y\in g^{-1}((t,\infty))]\\
&\overset{(a)}{=}\int_0^{\infty}\frac{\expect{T1_{\{X \in g^{-1}((t,\infty))\}}}}{\expect{T}}dt\\
&=\frac{1}{\expect{T}}\int_0^{\infty} \expect{T1_{\{g(X)>t\}}}dt\\
&\overset{(b)}{=}\frac{1}{\expect{T}}\expect{T \int_0^{\infty} 1_{\{g(X)>t\}}dt} \\
&\overset{(c)}{=}\frac{1}{\expect{T}}\expect{Tg(X)}
\end{align*}
where (a) holds by \eqref{eq:skewer};  (b) holds by the Fubini-Tonelli theorem; (c) uses the fact that $g(X)\geq 0$ so
$$ \int_0^{\infty} 1_{\{g(X)>t\}}dt = g(X)$$

This extends to real-valued functions $g:\mathbb{R}^n\rightarrow\mathbb{R}$ by defining $g_1, g_2$ as the positive and negative parts, so $g=g_1-g_2$. If  $\expect{g_1(Y)}$ and $\expect{g_2(Y)}$ are both finite then $\expect{g(Y)}=\expect{g_1(Y)}-\expect{g_2(Y)}$.  Finally, functions of the form $g=(g_1, \ldots, g_k)$ yield $\expect{g(Y)} = (\expect{g_1(Y)}, \ldots, \expect{g_k(Y)})$. 
\end{proof}

\bibliographystyle{unsrt}
\bibliography{../../../latex-mit/bibliography/refs}

\begin{thebibliography}{10}

\bibitem{sno-text}
M.~J. Neely.
\newblock {\em Stochastic Network Optimization with Application to
  Communication and Queueing Systems}.
\newblock Morgan \& Claypool, 2010.

\bibitem{ross-prob}
S.~Ross.
\newblock {\em Introduction to Probability Models}.
\newblock Academic Press, 8th edition, Dec. 2002.

\bibitem{puterman}
M.~L. Puterman.
\newblock {\em {M}arkov Decision Processes: Discrete Stochastic Dynamic
  Programming}.
\newblock John Wiley \& Sons, 2005.

\bibitem{mine-mdp}
H.~Mine and S.~Osaki.
\newblock {\em {M}arkovian Decision Processes}.
\newblock American Elsevier, New York, 1970.

\bibitem{boyd-convex}
S.~Boyd and L.~Vandenberghe.
\newblock {\em Convex Optimization}.
\newblock Cambridge University Press, 2004.

\bibitem{fox-linear-fractional-mdp}
B.~Fox.
\newblock {M}arkov renewal programming by linear fractional programming.
\newblock {\em Siam J. Appl. Math}, vol. 14, no. 6, Nov. 1966.

\bibitem{blackwell-discount-dp}
D.~Blackwell.
\newblock Discounted dynamic programming.
\newblock {\em Annals of Mathematical Statistics}, 1964.

\bibitem{maitra-dp}
A.~Maitra.
\newblock Discounted dynamic programming on compact metric spaces.
\newblock {\em Indian Journal of Statistics, Series A}, 30(2):211--216, 1968.

\bibitem{schal-dp}
M.~Sch{\"a}l.
\newblock On dynamic programming: Compactness of the space of policies.
\newblock {\em Stochastic Processes and their Applications}, 3:345--364, 1975.

\bibitem{tass-radio-nets}
L.~Tassiulas and A.~Ephremides.
\newblock Stability properties of constrained queueing systems and scheduling
  policies for maximum throughput in multihop radio networks.
\newblock {\em IEEE Transactions on Automatic Control}, vol. 37, no. 12, pp.
  1936-1948, Dec. 1992.

\bibitem{tass-server-allocation}
L.~Tassiulas and A.~Ephremides.
\newblock Dynamic server allocation to parallel queues with randomly varying
  connectivity.
\newblock {\em IEEE Transactions on Information Theory}, vol. 39, no. 2, pp.
  466-478, March 1993.

\bibitem{tass-delayed-info}
L.~Tassiulas and A.~Ephremides.
\newblock Throughput properties of a queueing network with distributed dynamic
  routing and flow control.
\newblock {\em Advances in Applied Probability}, vol. 28, pp. 285-307, 1996.

\bibitem{neely-energy-it}
M.~J. Neely.
\newblock Energy optimal control for time varying wireless networks.
\newblock {\em IEEE Transactions on Information Theory}, vol. 52, no. 7, pp.
  2915-2934, July 2006.

\bibitem{neely-fairness-ton}
M.~J. Neely, E.~Modiano, and C.~Li.
\newblock Fairness and optimal stochastic control for heterogeneous networks.
\newblock {\em IEEE/ACM Transactions on Networking}, vol. 16, no. 2, pp.
  396-409, April 2008.

\bibitem{prop-fair-down}
H.~Kushner and P.~Whiting.
\newblock Asymptotic properties of proportional-fair sharing algorithms.
\newblock {\em Proc. 40th Annual Allerton Conf. on Communication, Control, and
  Computing, Monticello, IL}, Oct. 2002.

\bibitem{vijay-allerton02}
R.~Agrawal and V.~Subramanian.
\newblock Optimality of certain channel aware scheduling policies.
\newblock {\em Proc. 40th Annual Allerton Conf. on Communication, Control, and
  Computing, Monticello, IL}, Oct. 2002.

\bibitem{neely-frank-wolfe-ton}
M.~J. Neely.
\newblock Convergence and adaptation for utility optimal opportunistic
  scheduling.
\newblock {\em IEEE/ACM Transactions on Networking}, 27(3):904--917, June 2019.

\bibitem{stolyar-greedy}
A.~Stolyar.
\newblock Maximizing queueing network utility subject to stability: Greedy
  primal-dual algorithm.
\newblock {\em Queueing Systems}, vol. 50, no. 4, pp. 401-457, 2005.

\bibitem{shroff-opportunistic}
X.~Liu, E.~K.~P. Chong, and N.~B. Shroff.
\newblock A framework for opportunistic scheduling in wireless networks.
\newblock {\em Computer Networks}, vol. 41, no. 4, pp. 451-474, March 2003.

\bibitem{atilla-primal-dual-jsac}
A.~Eryilmaz and R.~Srikant.
\newblock Joint congestion control, routing, and {MAC} for stability and
  fairness in wireless networks.
\newblock {\em IEEE Journal on Selected Areas in Communications, Special Issue
  on Nonlinear Optimization of Communication Systems}, vol. 14, pp. 1514-1524,
  Aug. 2006.

\bibitem{atilla-fairness-ton}
A.~Eryilmaz and R.~Srikant.
\newblock Fair resource allocation in wireless networks using
  queue-length-based scheduling and congestion control.
\newblock {\em IEEE/ACM Transactions on Networking}, vol. 15, no. 6, pp.
  1333-1344, Dec. 2007.

\bibitem{stolyar-gpd-gen}
A.~Stolyar.
\newblock Greedy primal-dual algorithm for dynamic resource allocation in
  complex networks.
\newblock {\em Queueing Systems}, vol. 54, no. 3, pp. 203-220, 2006.

\bibitem{asynchronous-markov}
M.~J. Neely.
\newblock Asynchronous control for coupled {M}arkov decision systems.
\newblock {\em Proc. Information Theory Workshop (ITW)}, 2012.

\bibitem{neely-fractional-markov-allerton2011}
M.~J. Neely.
\newblock Online fractional programming for {M}arkov decision systems.
\newblock {\em Proc. Allerton Conf. on Communication, Control, and Computing},
  Sept. 2011.

\bibitem{neely-mwl-tac}
M.~J. Neely, S.~T. Rager, and T.~F.~La Porta.
\newblock Max weight learning algorithms for scheduling in unknown
  environments.
\newblock {\em IEEE Transactions on Automatic Control}, vol. 57, no. 5, pp.
  1179-1191, May 2012.

\bibitem{robot-routing2017}
R.~Dimitrova, I.~Gavran, R.~Majumdar, V.~S. Prabhu, S.~Soudjani, and E.~Zadeh.
\newblock {The Robot Routing Problem for Collecting Aggregate Stochastic
  Rewards}.
\newblock In {\em 28th International Conference on Concurrency Theory (CONCUR
  2017)}, volume~85 of {\em Leibniz International Proceedings in Informatics
  (LIPIcs)}, pages 13:1--13:17, Dagstuhl, Germany, 2017.

\bibitem{robot-patrolling}
Satoshi Hoshino and Shingo Ugajin.
\newblock Adaptive patrolling by mobile robot for changing visitor trends.
\newblock In {\em 2016 IEEE/RSJ International Conference on Intelligent Robots
  and Systems (IROS)}, page 104–110. IEEE Press, 2016.

\bibitem{robot-patrolling2}
R.~Stranders, E.~{Munoz de Cote}, A.~Rogers, and N.R. Jennings.
\newblock Near-optimal continuous patrolling with teams of mobile information
  gathering agents.
\newblock {\em Artificial Intelligence}, 195:63--105, 2013.

\bibitem{MDP-robot-opportunistic}
A.~Dutta, , O.~P. Kreidl, and J.~M. O'Kane.
\newblock Opportunistic multi-robot environmental sampling via decentralized
  markov decision processes.
\newblock In Fumitoshi Matsuno, Shun-ichi Azuma, and Masahito Yamamoto,
  editors, {\em Distributed Autonomous Robotic Systems}, pages 163--175, Cham,
  2022. Springer International Publishing.

\bibitem{online-MDP-2009}
E.~Eyal, S.~M. Kakade, and Y.~Mansour.
\newblock Online markov decision processes.
\newblock {\em Mathematics of Operations Research}, 34(3), 2009.

\bibitem{xiaohan-online-MDP-journal}
X.~Wei, H.~Yu, and M.~J. Neely.
\newblock Online learning in weakly coupled {M}arkov decision processes: A
  convergence time study.
\newblock {\em ACM Meas. Anal. Comput. Syst.}, March 2018.

\bibitem{online-MDP2022}
P.~Zhao, L.~Li, and Z.~Zhou.
\newblock Dynamic regret of online markov decision processes.
\newblock {\em Proc. 39th Int. Conf. on Machine Learning}, 2022.

\bibitem{epsilon-mdp}
I.~Szita, B.~Tak{\'a}cs, and A.~L{\"o}rincz.
\newblock $\epsilon$-mdps: Learning in varying environments.
\newblock {\em Journal of Machine Learning Research}, 3:145--174, 2002.

\bibitem{blackwell-memoryless}
D.~Blackwell.
\newblock Memoryless strategies in finite-stage dynamic programming.
\newblock {\em Annals of Mathematical Statistics}, 1963.

\bibitem{savage-gamble-selection}
L.~E. Dubins and L.~J. Savage.
\newblock {\em How to Gamble if you Must: Inequalities for Stochastic
  Processes}.
\newblock Courier Corp., 2014.

\bibitem{tseng-accelerated}
P.~Tseng.
\newblock On accelerated proximal gradient methods for convex-concave
  optimization.
\newblock {\em submitted to SIAM J. Optimization}, 2008.

\bibitem{xiaohan-selm}
X.~Wei, H.~Yu, and M.~J. Neely.
\newblock Online primal-dual mirror descent under stochastic constraints.
\newblock {\em Proc. ACM Meas. Anal. Comput. Syst}, 4(2), 2020.

\bibitem{SGD-robust}
A.~Nemirovski, A.~Juditsky, G.~Lan, and A.~Shapiro.
\newblock Robust stochastic approximation approach to stochastic programming.
\newblock {\em SIAM Journal on Optimization}, 19(4):1574--1609, 2009.

\bibitem{bertsekas-neural}
D.~P. Bertsekas and J.~N. Tsitsiklis.
\newblock {\em Neuro-Dynamic Programming}.
\newblock Athena Scientific, Belmont, Mass, 1996.

\bibitem{bertsekas-dp}
D.~P. Bertsekas.
\newblock {\em Dynamic Programming and Optimal Control, vols. 1 and 2}.
\newblock Athena Scientific, Belmont, Mass, 1995.

\bibitem{robbins-monro}
H.~Robbins and S.~Monro.
\newblock A stochastic approximation method.
\newblock {\em Annals of Mathematical Statistics}, 22(3):400--407, 1951.

\bibitem{constructible-book}
J.~M. Borwein and J.~D. Vanderwerff.
\newblock {\em Convex Functions: Constructions, Characterizations and
  Counterexamples}.
\newblock Cambridge University Press, 2010.

\bibitem{kallenberg}
O.~Kallenberg.
\newblock {\em Foundations of Modern Probability, 2nd ed., Probability and its
  Applications}.
\newblock Springer-Verlag, 2002.

\bibitem{srivastava-borel}
S.~M. Srivastava.
\newblock {\em A Course on Borel Sets}.
\newblock Springer New York, NY, 1998.

\bibitem{chow-lln}
Y.~S. Chow.
\newblock On a strong law of large numbers for martingales.
\newblock {\em Ann. Math Statist}, vol. 38, no. 2, 1967.

\end{thebibliography}
\end{document}